\newtheorem{theorem}{Theorem}[section]
\newtheorem{corollary}[theorem]{Corollary}
\newtheorem{proposition}[theorem]{Proposition}
\newtheorem{lemma}[theorem]{Lemma}
\newtheorem{conjectures}[theorem]{Conjectures}
\theoremstyle{definition}
\newtheorem{example}[theorem]{Example}
\newtheorem{definition}[theorem]{Definition}
\newtheorem{remark}[theorem]{Remark}
\newtheorem{observation}[theorem]{Observation}
\DeclareMathOperator{\tr}{tr}
\DeclareMathOperator{\mult}{mult}
\DeclareMathOperator{\spec}{\sigma}
\DeclareMathOperator{\nullity}{nullity}
\DeclareMathOperator{\Ker}{kernel}
\newcommand{\ns}[1][w]{{\mathcal{R}_{#1}}}
\newcommand{\pendentpath}[1]{pendent $#1$-path}
\newcommand{\cs}{{\mathcal{R}}}
\let\R\cs
\newcommand{\highMultiplicityList}{critical multiplicity list}
\DeclareMathOperator{\h}{ht}
\DeclareMathOperator{\children}{children}
\newcommand{\trans}{^\top}
\newcommand{\bR}{\mathbb{R}}
\newcommand{\bftree}{T_{\mathrm{BF}}}
\newcommand{\PHset}{{\mathcal{PH}}}
\renewcommand{\S}{\mathcal{S}}
\newcommand{\bZ}{\mathbb{Z}}
\newcommand{\ordm}{{\bf m}}
\newcommand{\IEPG}{IEP-$G$}
\newcommand{\RS}{\mathrm{RS}}
\newcommand{\simplexk}{\mathring{\Delta}_{k - 2}}
\newcommand{\simplex}{\mathring{\Delta}}
\newcommand{\seq}{\mspace{-10mu}&=\mspace{7mu}}
\newcommand{\splus}{\mspace{-13mu}&+&\mspace{-13mu}}
\newcommand{\smin}{\mspace{-13mu}&-&\mspace{-13mu}}
\newcommand{\sprox}{\mspace{-12mu}& \approx & \mspace{-12mu}}
\newcommand{\sleft}{\Bigl[&\mspace{-18mu}}
\newcommand{\sright}{\Bigr]}
\newcommand{\om}{\xi}
\newcommand{\ordmrigid}{\ordm_{\mathrm{rig}}}
\newcommand{\Texample}{T_8}
\tikzset{
    every node/.style={draw, circle, fill=white, inner sep=1pt}
    }
\newcommand{\inductiveStepPicture}{%
\begin{figure}[t]
  \begin{center}
    \begin{tikzpicture}[scale=0.8]
    \tikzset{every node/.style={}}
      \draw[thick] (5,10)--(6,11)--(7,10);
      \draw[thick] (8,13)--(9,14)--(10,13);
      \draw[thick] (9,14)--(10,15);
      \draw[thick] (7,12)--(6,11);
      \draw[thick] (4,9)--(2,7);
      \draw[thick] (2,7)--(1,6);
      \draw[thick] (3.5,7)--(3,8);
      \draw[thick,dashed,orange] (1,2)--(1,6);
      \node[draw,circle,fill=orange] at (3,8) {};
      \node[draw,circle,fill=orange] at (6,11) {};
      \node[draw,circle,fill=orange] at (9,14) {};
      \foreach  \x in {2,6,7,8}{
      \draw (5,\x) node[anchor=west]{$\cdots$};
      };
      \foreach  \x in {9,10,11}{
      \draw (8,\x) node[anchor=west]{$\cdots$};
      };
      \foreach  \x in {12,13,14}{
      \draw (11,\x) node[anchor=west]{$\cdots$};
      };
      \foreach  \x in {(5,10),(8,13)}{
        \begin{scope}[shift=(\x)]
     	  \node[draw,circle,fill=white] at (0,0) (A) {};
          \node[draw,circle,fill=white] at (-1,-1) (B1) {};
          \node[draw,circle,fill=white] at (-0.5,-1) (B2) {};
          \node[draw,circle,fill=white] at (0,-1) (B3) {};
          \node[draw,circle,fill=white] at (0.5,-1) (B4) {};
          \node[draw,circle,fill=white] at (1,-1) (B5) {};
     	  \foreach \y in {B1,B2,B3,B4,B5} \draw[thick] (\y)--(A);
        \end{scope}};
      \foreach  \x in {(2,7),(3.5,7),(7,10),(10,13)}{
        \begin{scope}[shift=(\x)]
     	  \node[draw,circle,fill=white] at (0,0) (A) {};
          \node[draw,circle,fill=white] at (-0.5,-1) (B1) {};
          \node[draw,circle,fill=white] at (0,-1) (B2) {};
          \node[draw,circle,fill=white] at (0.5,-1) (B3) {};
     	  \foreach \y in {B1,B2,B3} \draw[thick] (\y)--(A);
        \end{scope}};
      \node[draw,circle,fill=orange] at (1,6) {};
      \node[draw,circle,fill=orange] at (1,2)  {};
      \foreach  \x in {1.5,2,2.5,3,3.5,4}{
        \begin{scope}[shift={(\x,6)}]
          \node[draw,circle,fill=blue] at (0,0) (A) {};
          \node[draw,circle,fill=blue] at (0,-4) (B) {};
          \draw[thick,dashed,blue] (B)--(A);
        \end{scope}};
      \draw[color=blue] (4.5,4) node[anchor=west]{pendent $(h+1)$-paths};
      \draw[color=blue] (0.5,6) node[anchor=east]{height $h$};
      \draw[color=blue] (0.5,2) node[anchor=east]{height $0$};
      \foreach  \x in {1,3,4,6,7}{
        \draw (0.5,6+\x) node[anchor=east]{height $h+\x$};
      };
      \foreach  \y in {2,5,8}{
        \draw (0.5,6+\y) node[anchor=east, color=orange]{height $h+\y$};
      };
    \end{tikzpicture}
    \caption{Inductive step. The tree shown is part of $T^{(h)}$, which has pendent paths with $h+1$ vertices as shown. Orange vertices at height $3j+h+2$ and one path $Q$ are independent invertible trees. For this illustration, just a few vertices are drawn on the left hand side of each level.}
    \label{fig:induction}
  \end{center}
\end{figure}
}
\title{Spectral arbitrariness for trees fails spectacularly}
\author[Fallat, Hall, Levene, Meyer, Nasserasr, Oblak, \v Smigoc]{Shaun M.~Fallat, H.~Tracy Hall, Rupert H.~Levene, Seth A.~Meyer, Shahla Nasserasr, Polona Oblak, Helena \v Smigoc}
\address[S.~M.~Fallat]{Department of Mathematics and Statistics, University of Regina, Regina, SK, S4S 0A2, Canada. (Corresponding Author)}
\email{shaun.fallat@uregina.ca}
\address[H.~T.~Hall]{Hall Labs, LLC}
\email{h.tracy@gmail.com}
\address[R.~H.~Levene and H.~\v Smigoc]{School of Mathematics and Statistics, University College Dublin, Belfield, Dublin 4, Ireland}
\email{rupert.levene@ucd.ie}
\email{helena.smigoc@ucd.ie}
\address[S.~A.~Meyer]{Mathematics Discipline, St. Norbert College, De Pere, WI 54115, USA}
\email{seth.meyer@snc.edu}
\address[S.~Nasserasr]{School of Mathematical Sciences, Rochester Institute of Technology, Rochester, NY, USA}
\email{shahla@mail.rit.edu}
\address[P.~Oblak]{Faculty of Computer and Information Science, University of Ljubljana, Ve\v cna pot 113, SI-1000 Ljubljana, Slovenia; Faculty of Mathematics and Physics, University of Ljubljana and Institute of Mathematics, Physics, and Mechanics, Jadranska ulica 19, 1000 Ljubljana, Slovenia}
\email{polona.oblak@fri.uni-lj.si}
\date{January 26, 2023} %
\begin{document}

\keywords{spectrum, multiplicity lists, rooted trees, hedges, inverse eigenvalue problem for graphs, branches.}

\subjclass[2020]{15A29, 05C50, 15A18.}

\begin{abstract}
   If $G$ is a graph and $\ordm$ is an ordered multiplicity list which is realizable by at least one symmetric matrix with graph $G$, what can we say about the eigenvalues of all such realizing matrices for $\ordm$? It has sometimes been tempting to expect, especially in the case that $G$ is a tree, that any spacing of the multiple eigenvalues should be realizable. %
   In~\cite{MR2033476}, however, F. Barioli and S. Fallat produced the first counterexample: a tree on 16 vertices and an ordered multiplicity list for which every realizing set of eigenvalues obeys a nontrivial linear constraint.  
   
   We extend this by giving an infinite family of trees and ordered multiplicity lists whose sets of realizing eigenvalues are very highly constrained, with at most 5 degrees of freedom, regardless of the size of the tree in this family. In particular, we give the first examples of multiplicity lists for a tree which impose nontrivial nonlinear eigenvalue constraints and produce an ordered multiplicity list which is achieved by a \emph{unique} set of eigenvalues, up to shifting and scaling.
\end{abstract}
\maketitle

{\footnotesize \tableofcontents}

\section{Introduction}

Given a simple graph $G$ on $n$ vertices, the inverse eigenvalue problem for $G$ (\IEPG{}) asks
for all possible spectra of the associated symmetric matrices---that is, the possible multisets of eigenvalues of  $n\times n$ symmetric matrices with real entries whose off-diagonal entries have the same zero-nonzero pattern as the adjacency matrix of $G$.
Important subproblems include asking only for all possible combinations of eigenvalue multiplicities (possibly including the order in which they occur) or asking only for the maximum possible multiplicity.
For an overview of the literature on this problem and these subproblems, including discussion of how such inverse problems can arise naturally in studying the dynamics of a physical system for which $G$ encodes which pieces of the system interact with each other, see
the recent monograph \cite{HLS2022inverse}.
Inverse problems are, notoriously, as difficult as they are important, and the \IEPG{} and related inverse problems are no exception.
The considerable literature on this problem over the past few decades traces its roots to the less difficult (but still important) special case where $G$ is a path, %
for which the full \IEPG{} was resolved in 1974 by Hochstad \cite{MR382314} with the answer that any numerically ordered list of $n$ distinct real eigenvalues can be the ordered spectrum of a matrix whose pattern corresponds to a path. 
We say in this case, for $G$ a path on $n$ vertices, that the ordered multiplicity list of $n$ singletons is \emph{spectrally arbitrary}.
For a general graph $G$, spectral arbitrariness means that the particular numerical values of the various singleton or multiple eigenvalues %
can be chosen arbitrarily, with only the constraint that they occur in the proper order.
This property %
is very convenient whenever it happens to hold for all achievable ordered multiplicity lists, because it means that once you have solved the subproblem of ordered multiplicity lists, you have solved the entire \IEPG{}. 

For general $G$, not only the full \IEPG{} but even the subproblems of multiplicity lists or maximum multiplicity remain open and are considered to be very difficult.
Some progress has been made for small graphs or special families of graphs.
In particular, within the large family of graphs consisting of all trees, the maximum multiplicity has been shown \cite{MR1712856} to equal the path cover number of the tree.
Finding all possible ordered multiplicity lists for trees remains somewhat more difficult, but it was conjectured by Johnson and Leal Duarte \cite{MR1902112} that this would suffice for the full \IEPG{} for any given tree, or in other words that all achievable multiplicity lists for a tree would be spectrally arbitrary.
This was disproved by Barioli and Fallat \cite{BFH04} who gave an example of a relatively small tree and a particular ordered multiplicity list that can be realized for that tree, but only when the numerical placement of the eigenvalues satisfies a specific linear constraint.  This was extended by Ferrero et~al.~\cite{FFHHLMNS}, who generalized the tree in \cite{BFH04} to an infinite family of trees and showed that each has a multiplicity list that requires a linear constraint in eigenvalue placement.
However, prior to the present results, it was not known whether there were any graphs with multiplicity lists that required non-linear constraints, or to what degree spectral arbitrariness could fail for a tree. 

The present work first offers a construction (in Section~\ref{sec:path-construction}) that builds on the solution to the \IEPG{} for paths, but that produces matrices corresponding to a fairly general family of trees that is introduced and named \emph{hedges}. For a given hedge $T$, the construction produces a matrix that not only achieves the maximum possible multiplicity, but also has several other high eigenvalue multiplicities.
Special attention is paid to make the multiplicities of \emph{five} particular eigenvalues as high as possible, which exhausts all of the allowable choices within this technique and results in a matrix with several spectral and structural constraints.
In particular, while the construction allows for the eigenvalues corresponding to the five distinguished multiplicities to be chosen numerically with a full five degrees of freedom, once those five choices are made, the values for all other eigenvalues are completely determined, including many relatively high multiplicities in addition to the five.
This high degree of constraint is not surprising, since the construction has many fewer degrees of freedom than there are eigenvalues in the spectrum. 

Much more remarkable than the construction of a particular matrix with high multiplicities is its converse: We show (as Theorem~\ref{thm:main}) that
given some additional constraints on the hedge $T$ (it must be a \emph{lush hedge}),
any symmetric matrix with pattern $T$ that achieves at least these five distinguished multiplicities must arise from precisely this construction. The proof depends on a combinatorial argument that extends the idea of the path cover number and the idea of zero forcing, and that uses the structure of the introduced family of hedges in an essential way.

The converse result demonstrates that spectral arbitrariness for trees can fail to a much greater degree than was imagined in the work that first demonstrated spectral arbitrariness for trees failing by a single linear constraint.
At least two degrees of freedom, called \emph{shifting and scaling,} are always available in the placement of eigenvalues with particular multiplicities.
If $A$ is a matrix exhibiting the pattern of a graph $G$, then for any chosen strictly positive real number $m$ and any real number $b$, the matrix $mA + bI$, whose spectrum has been scaled by $m$ and shifted by $b$, will have the same pattern $G$ and the same ordered list of eigenvalue multiplicities. 
For a list of $k$ multiplicities, then, spectral arbitrariness implies two trivial and $k - 2$ non-trivial degrees of freedom.
A single linear constraint reduces this to $k - 3$ non-trivial degrees of freedom.
Under the hypotheses of Theorem~\ref{thm:main}, the $k - 3$ non-trivial degrees of freedom reduce to only three non-trivial degrees of freedom even though $k$ is quadratic in the diameter of the lush hedge $T$.
The many additional constraints include non-linear constraints, including a cubic constraint described explicitly in Example~\ref{ex:T31-M-construction}.

In Section~\ref{sec:rigid} the failure of spectral arbitrariness is pushed to its final collapse by exhibiting a certain ordered multiplicity list, for any lush hedge of sufficient height, whose eigenvalue placement is entirely immune to non-trivial perturbation.
To achieve this, particular choices are made for the five distinguished eigenvalues which use up the three remaining non-trivial degrees of freedom to engineer a further three eigenvalue coincidences.
The resulting list of eigenvalue multiplicities
has only the two trivial degrees of freedom from shifting and scaling in its realization space, which means that the relative spacing of the eigenvalues is completely rigid.

The work outlined above has important consequences in our understanding of the inverse eigenvalue problem for graphs.
It allows us in particular (in Section~\ref{sec:S(T)}) to resolve certain open questions and established conjectures,
and we believe that the combinatorial techniques developed will find significant use in the field.

\section{Setup and notation}

In this section we introduce some notation and terminology and outline how this work fits into the existing body of results on the \IEPG{}. 

\subsection{Standard notation}
For any positive integer $n$, let $[n]:=\{1,\ldots,n\}$, and let $[0]:=\emptyset$. We write ${\bf 0}$ for a zero vector, ${\bf 1}$ for an all-ones vector, and ${\bf e}_i$ for a vector with all entries equal to $0$ except the $i$th entry, which is $1$. The order of vectors will be clear from the context.  A vector ${\bf t}=(t_i)\in \bR^n$ is \emph{positive} if $t_i>0$ for all $i\in [n]$. We denote the set of all $m\times n$ matrices with real entries by $\bR^{m\times n}$. By $I_n$ we denote the identity matrix in $\bR^{n\times n}$, and $E_{i,j}$ denotes the matrix unit of the appropriate order, with a $1$ in the $(i,j)$ entry, and $0$s elsewhere. A vector or a matrix is \emph{nowhere zero} if each of its entries is nonzero. The characteristic polynomial of a matrix $A$ is denoted by $p_A(x)$. 

 We will consider only simple graphs $G=(V(G),E(G))$, writing $|G|:=|V(G)|$.
The \emph{degree} of a vertex $v$ in a graph $G$ is denoted by $\deg_G(v)$; a \emph{leaf} is a vertex of degree one. If $u$ and $v$ are vertices of $G$, then a \emph{path} from $u$ to $v$ is a sequence of distinct vertices $u=v_0,v_1,\dots,v_k=v$ where $k$ is a non-negative integer, and $\{v_i,v_{i+1}\}$ is an edge of $G$ for $0\le i<k$; the \emph{length} of this path is $k$~edges. The \emph{distance} between $u$ and $v$, denoted by %
$d(u,v)$, is the minimum length of a path between $u$ and $v$, and the \emph{diameter} of~$G$ is the maximum distance between two vertices in~$G$. 
A set of vertex-disjoint induced paths in~$G$ whose vertex union is $V(G)$ is called a \emph{path cover} of $G$. The minimum number of paths in a path cover is called the \emph{path cover number} of $G$ and is denoted by $P(G)$.  

In this work, the letter $T$ will denote a \emph{tree}, which is an acyclic, connected, %
finite graph.  A \emph{forest} is a disjoint union of trees.
A \emph{rooted tree} $T$ is a tree with a distinguished vertex $r$, called the \emph{root} of the tree. 
In a rooted tree with at least two vertices, a \emph{leaf} is a non-root vertex of degree 1. As we will often refer to paths, for $n\ge1$ we write $P_n$ for the path $1\text{---}2\text{---}\cdots\text{---}n$ with vertex set $[n]$. We will consistently regard this as a rooted tree, with root vertex $r=1$, and when $T=P_1$, the only vertex is considered both the root and a leaf.
 For $n\ge1$, the star graph $K_{1,n}$ is the rooted tree on $n+1$ vertices consisting of a root vertex connected to $n$ leaves.
 
 If $y$ and $z$ are neighbouring vertices of~$T$ so that the path from the root to $y$ goes through $z$,  then we say that $y$ is a \emph{child} of $z$, or that $z$ is the (necessarily unique) \emph{parent} of $y$.  Two vertices with the same parent are called \emph{siblings}. An induced path $Q$ on $k$ vertices is \emph{a \pendentpath{k} in~$T$} if the root of~$T$ is not in $Q$ and the only edge between $Q$ and $T\setminus Q$ connects to one of the ends of $Q$. A \emph{pendent path in~$T$} is a \pendentpath{k} in~$T$, for some $k\ge1$.

\subsection{The \texorpdfstring{\IEPG{}}{IEP-G} and spectral arbitrariness}
Given a graph $G$ on $n$ vertices, let $\S(G)$ be the set of all $n \times n$ real symmetric  matrices $A=(a_{i,j})$ such that for $i \ne j$, $a_{i,j}\ne 0$ if and only if $\{i,j\} \in E(G)$, with no restriction on the diagonal entries of~$A$. (Here we implicitly identify $V(G)$ with $[n]$.) The \emph{inverse eigenvalue problem} of $G$ (\IEPG{}) asks which multisets of $n$ real numbers are the spectra of some matrix in $\S(G)$. The \IEPG{} is a difficult open problem, and to date it has been resolved for some families of graphs only: for graphs on at most five vertices,~\cite{IEPG2, MR3291662}, paths~\cite{MR447294, MR447279, MR382314}, cycles~\cite{MR583498}, generalized stars~\cite{MR2022294}, linear trees~\cite{MR4357320}, complete graphs~\cite{MR3118942} and lollipop and barbell graphs~\cite{MR4316738}. It is the topic of a large body of work and has motivated the investigation of several subproblems, see e.g.~\cite{HLS2022inverse}. %

For a matrix $A\in \bR^{n\times n}$, we use  $\mult(\lambda,A)$ to denote the multiplicity of a scalar $\lambda$ as an eigenvalue. If $A$ has $k$ distinct eigenvalues $\lambda_1, \ldots,\lambda_{k}$ we denote its spectrum as the multiset $\spec(A)=\{\lambda_1^{(m_1)},\ldots,\lambda_k^{(m_k)}\}$, where $m_i=\mult(\lambda_i,A)\ge1$ for $i\in[k]$. We say that a multiset $\sigma$ is \emph{realizable} for $G$ if $\sigma=\spec(A)$ for some $A\in \S(G)$.  The \emph{maximum multiplicity} of a graph $G$ is $M(G):=\max\{\mult(\lambda,A):\lambda\in \sigma(A),A\in\S(G)\}$.

The list $\{m_1,\ldots,m_k\}$ is called the \emph{unordered multiplicity list} of~$A$, and if $\lambda_1< \ldots<\lambda_{k}$ in the notation above, then $\ordm(A)=(m_1,\ldots,m_k)$ is called the \emph{ordered multiplicity list} of~$A$.\label{def:ord-mult-list}  We say that the unordered multiplicity list $\{m_1,\ldots,m_k\}$ is \emph{realizable} in $\S(G)$ if there exists a set of distinct values $\lambda_1,\ldots,\lambda_k$ such that $\spec(A)=\{\lambda_1^{(m_1)},\ldots,\lambda_k^{(m_k)}\}$ for some $A\in \S(G)$. We say that an ordered list $\ordm=(m_1,\ldots,m_k)$ is \emph{realizable} for $G$ if $\ordm=\ordm(A)$ for some $A\in \S(G)$.

An ordered multiplicity list $\ordm=(m_1,\dots,m_k)$ is said to be \emph{spectrally arbitrary for $G$} if, for all real numbers $\lambda_1,\dots,\lambda_k$ satisfying $\lambda_1<\dots<\lambda_k$, the multiset $\{\lambda_1^{(m_1)},\ldots,\lambda_k^{(m_k)}\}$ is realizable for~$G$.
Moreover, $G$ is called \emph{spectrally arbitrary} if every ordered multiplicity list $\ordm$ that is realizable for~$G$ is spectrally arbitrary for~$G$.

We adopt natural operations on multisets; for example, a union of multisets is formed by adding multiplicities, an intersection corresponds to taking the minimum of multiplicities, (multi)set differences are formed by subtracting multiplicities and taking the non-negative part, and if $s\geq 0$ is an integer and $\Lambda=\{\lambda_1^{(m_1)},\ldots,\lambda_k^{(m_k)}\}$ is a multiset, we use the notation $s\Lambda:=\{\lambda_1^{(sm_1)},\ldots,\lambda_k^{(sm_k)}\}$. %

The questions of characterizing all possible ordered or unordered realizable multiplicity lists for a given graph have been studied extensively, with many advances noted primarily for trees,~\cite{MR4284782, BFH04, MR2547901, MR3557827, FFHHLMNS,MR1712856, MR1902112,KimShader, MR1899084}.  In particular,  upper bounds for the sums of largest multiplicities have been computed~\cite{FFHHLMNS,MR4295979}. Additionally, in Section \ref{sec:comb}, we will use that for any tree $T$, we have the equality $M(T)=P(T)$  which was first shown in \cite{MR1712856}.  

It has been recently shown that linear trees are spectrally arbitrary~\cite{MR4357320}.
On the other hand, it was first proved by Barioli and Fallat~\cite{BFH04} that in general, a tree need not be spectrally arbitrary. Following their idea, it is possible to prove that the tree $\bftree$ shown in Figure~\ref{fig:BFtree} is the smallest such example and the ordered multiplicity list $\ordm_{BF}=(1,2,4,2,1)$ is realizable but not spectrally arbitrary for $\bftree$. Failure of spectral arbitrariness is due to the fact that the corresponding eigenvalues $\lambda_1<\ldots<\lambda_5$ realizing $\ordm_{BF}$ in $\S(\bftree)$ must fulfill the linear constraint $\lambda_1+\lambda_5=\lambda_2+\lambda_4$. In Corollary~\ref{cor:four-multiplicities} we prove that the same linear constraint must be fulfilled for a large family of trees with four eigenvalues of high multiplicities.
Working with a family of larger trees $T$, Ferrero et~al.~\cite{FFHHLMNS} found an unordered multiplicity list (depending on $T$) so that all realizing matrices for $T$ satisfy another linear constraint. As we will see in Example~\ref{ex:T31-M-construction} and Section~\ref{sec:rigid}, our results allow us to recover, refine and greatly extend these eigenvalue constraints.

 \begin{figure}[htb]
    \centering{%
    \begin{tikzpicture}[scale=0.5]
   \node[draw,circle,fill=white] at (0,0) (D) {$1$};
    \node[draw,circle,fill=white] at (-3,-2) (A) {$2$};
    \node[draw,circle,fill=white] at (-4,-4) (E) {$3$};
    \node[draw,circle,fill=white] at (0,-2) (B) {$4$};
    \node[draw,circle,fill=white] at (-1,-4) (G) {$5$};
    \node[draw,circle,fill=white] at (3,-2) (C) {$6$};
    \node[draw,circle,fill=white] at (2,-4) (I) {$7$};
    \node[draw,circle,fill=white] at (-2,-4) (F) {$8$};
    \node[draw,circle,fill=white] at (1,-4) (H) {$9$};
    \node[draw,circle,fill=white] at (4,-4) (J) {\footnotesize$10$};
        \draw(D)--(A)--(E);
        \draw(B)--(G); 
        \draw(C)--(I);
        \draw (H)--(B)--(D)--(C)--(J);
        \draw (A)--(F);
    \end{tikzpicture}}
    \caption{The Barioli-Fallat tree $\bftree$.}\label{fig:BFtree}
\end{figure}

\subsection{Geometric point of view}
In order to quantify the degree to which spectral arbitrariness fails for a particular graph $G$ and an ordered multiplicity list $\ordm$, it will be useful to define a moduli space $\RS(G, \ordm)$, see Definition~\ref{def:RS}, that captures essential variations in the possible placement of eigenvalues, where ``essential'' means that we wish to ignore trivial modifications of the spectrum by orientation-preserving affine transformations (also called ``shifting and scaling''). 

Given a symmetric matrix $A \in \S(G)$ realizing some ordered multiplicity list $\ordm$, and $m,b\in \bR$ with $m>0$,
the matrix $mA + bI \in \S(G)$ gives a different placement of eigenvalues for the same ordered multiplicity list~$\ordm$.
What this affine transformation does not change is the \emph{relative spacing} of eigenvalues, which is to say the ratios between eigenvalue gaps. The notation $\RS(G, \ordm)$ is meant to suggest 
every possible ``relative spacing'' of eigenvalues for $G$ and $\ordm$. 

The standard simplex of dimension $d$ in $\mathbb{R}^{d+1}$ will be denoted $\Delta_d$. Its
interior, consisting of every %
$(d+1)$-tuple of strictly positive real numbers whose sum is $1$, will be denoted $\simplex_d$.
Given a specific spectrum $\sigma$ realizing all $k$ multiplicities of $\ordm$,
we obtain a point in $\simplex_{k-2}$ by rescaling $\sigma$ to make the total width $1$ and then by listing, from left to right, the $k - 1$ rescaled gaps between successive distinct eigenvalues,
which are positive and sum to~$1$.

\begin{definition}
\label{def:RS}
  Let $G$ be a graph on $n$ vertices and $\ordm = (m_1, \dots, m_k)$ an ordered multiplicity list of positive integers with $\sum_{i\in[k]} m_i = n$. 
  In the case $k \ge 2$, the moduli space $\RS(G, \ordm)$ is defined as the subset of $\simplexk$ such that
  ${\bf p} \in \RS(G, \ordm)$ if and only if there exists a matrix $A \in \S(G)$
  with spectrum $\{\lambda_1^{(m_1)}, \dots, \lambda_k^{(m_k)}\}$, $\lambda_1<\ldots<\lambda_k$,   and
  \[
  {\bf p} = \left( \frac{\lambda_2 - \lambda_1}{\lambda_k - \lambda_1},
  \dots,
  \frac{\lambda_k - \lambda_{k - 1}}{\lambda_k - \lambda_1} \right).
  \]
  In the case $k = 1$, $\RS(G, \ordm)=\RS(G,(n))$ is defined as a single point when $G$ has no edges; otherwise $\RS(G, (n))$ is defined to be the empty set.
\end{definition}

Observe that if ${\bf p}=(p_1,\dots,p_{k-1})\in \RS(G,\ordm)$, then for $\lambda_1:=0$ and $\lambda_j:=\sum_{i=1}^{j-1}p_{i}$ for $2\le j\le k$, we have $0=\lambda_1<\lambda_2<\dots<\lambda_k=1$, and the spectrum $\{\lambda_1^{(m_1)}, \dots, \lambda_k^{(m_k)}\}$ is realized by some matrix in $\S(G)$. Moreover, up to translation and scaling, all such $\lambda_1<\dots<\lambda_k$ arise in this way. Hence,
$\ordm$ is spectrally arbitrary for $G$ if and only if
$\RS(G, \ordm) = \simplexk$, and $G$ is spectrally arbitrary if and only if, for every ordered partition $\ordm$ of $n$ with $k \ge 2$ parts, $\RS(G, \ordm)$ is either empty or equal to $\simplexk$. 

\begin{example}\label{ex:BF-12421}
    Recall that for $\bftree$ and the ordered multiplicity list $\ordm_{BF}=(1,2,4,2,1)$ the corresponding eigenvalues $\lambda_1<\ldots<\lambda_5$ realizing $\ordm_{BF}$ in $\S(\bftree)$ must fulfill the linear constraint $\lambda_1+\lambda_5=\lambda_2+\lambda_4$, or equivalently, $\lambda_2-\lambda_1=\lambda_5-\lambda_4$. In fact, there are no other constraints, so it follows that 
$$ \RS(\bftree, \ordm_{BF})=\left\{(p_1,p_2,p_3,p_4)\in \simplex_3:p_1=p_4\right\},$$
 which implies that $\RS(\bftree, \ordm_{BF})$ is 2-dimensional, being the intersection of the interior of the $3$-dimensional standard simplex $\simplex_3$ with a linear subspace, as shown in Figure~\ref{fig:RS-BFtree}.
 \begin{figure}[htb]
 \begin{tikzpicture}[line join=bevel,3d view={5}{-20}]
    \coordinate[label=$\mathbf{e}_1$](A) at (1,1,1);
  \coordinate[label=below:$\mathbf{e}_2$](B) at (1,-1,-1);
  \coordinate[label=left:$\mathbf{e}_3$](C) at (-1,1,-1);
  \coordinate[label=$\mathbf{e}_4$](D) at (-1,-1,1);
  \coordinate(E) at (0,0,1);
\draw[dashed] (C)--(D)--(A);
\draw[dashed] (C)--(D)--(B);
  \fill[blue!20,opacity=.8](C)--(E)--(B);
  \draw[dashed](B)--(C)--(E)--(B);
    \draw[dashed] (C)--(A)--(B);
\end{tikzpicture}
     \caption{The two dimensional manifold $\RS(\bftree, \ordm_{BF})$, in blue, shown inside the $3$-dimensional standard simplex $\Delta_3$.}
     \label{fig:RS-BFtree}
 \end{figure}
\end{example}

In this work we aim to better understand the geometry of $\RS(T, \ordm)$ for a specific family of trees, which we introduce in the next subsection. In contrast to previously known examples, we will show that $\RS(T, \ordm)$ does not have to be convex (see Example~\ref{ex:non-convexity}). Moreover, in Section~\ref{sec:rigid} we will see that it is possible to arrange for $\RS(T,\ordm)$ to consist of a single point for certain trees $T$ of arbitrarily large size, representing an extreme failure of spectral arbitrariness.
    
\subsection{Hedges}\label{subsec:hedges}
We will work with the following special family of rooted trees.

\begin{definition}
A rooted tree $T$ is called a \emph{hedge} if either $T=P_1$ or every leaf of~$T$ has the same distance to the root of~$T$.  
\end{definition}

The \emph{height} of a vertex $u$ in a hedge $T$, written as $\h(u)$, is the shortest distance from $u$ to a leaf, and the \emph{height} of a hedge, denoted by $\h(T)$, is the height of the root, $\h(r)$. For example, the height of $P_{n+1}$ is $n$, and $K_{1,n}$ has height~$1$. 

When $T$ is a hedge of height $H$ and $i\ge 0$, we define $V_i(T)$ to be the set of vertices of height $i$ in~$T$. In particular, $V_{H}(T)$ contains only the root vertex of~$T$, and 
$V_0(T)$ is the set of leaves of~$T$ . Clearly, $V_i(T)$ is non-empty for $0\le i\le H$, and $V_i(T)=\emptyset$ otherwise. For $i \geq 1$ let
  $$\ell_i(T):=|V_{i-1}(T)|-|V_{i}(T)|.$$ 
Notice that $\ell_{H+1}(T)=1$ and $\ell_i(T)=0$ if $i>H+1$. Since any non-leaf vertex of~$T$ has at least one child (unless we are in the trivial case $T=P_1$), we have $\ell_i(T)\ge0$ for all $i\in [H]$. When context allows we write $V_i:=V_i(T)$ and $\ell_i:=\ell_i(T)$.

Note that the $\ell_i$ sequence can be thought of as encoding the amount of branching at each height; see Figure~\ref{fig:H=2} for some illustrative examples.  
Equivalently, $T$ can be constructed by starting with $P_{H+1}$ and recursively forming the disjoint union with $\ell_i$ \pendentpath{i}s, and connecting each of these to the a vertex at height $i$ with a new connecting edge, %
in the order $i=H, H-1, H-2,\ldots, 1$.

\begin{figure}[htb]\label{fig:lush and non-lush}
  \definecolor{green_}{rgb}{0,.9,.4} %
  \definecolor{c-12}{rgb}{0,0.1,.9} %
  \definecolor{cr}{rgb}{.9,.2,0} %
\begin{subfigure}[b]{0.44\textwidth}    \begin{center}
     \scalebox{.4}{
     \begin{tikzpicture}
 	 \node[draw,circle,fill=white] at (-4,-4) (A) {\phantom{$1$}};
	 \node[draw,circle,fill=white] at (0,-4) (B) {\phantom{$1$}};
 	 \node[draw,circle,fill=white] at (4,-4) (C) {\phantom{$1$}};
 	 \node[draw,circle,fill=white] at (-16/3,-8) (E) {\phantom{$1$}};
 	 \node[draw,circle,fill=green_] at (-8/3,-8) (F) {\phantom{$1$}};
 	 \node[draw,circle,fill=white] at (-4/3,-8) (G) {\phantom{$1$}};
 	 \node[draw,circle,fill=green_] at (4/3,-8) (H) {\phantom{$1$}};
 	 \node[draw,circle,fill=white] at (8/3,-8) (I) {\phantom{$1$}};
 	 \node[draw,circle,fill=green_] at (16/3,-8) (J) {\phantom{$1$}};
       \draw[color=c-12,very thick] (0,0)--(A)--(E);
     \draw[very thick,color=cr](B)--(G);
     \draw[very thick,color=cr](C)--(I);
     \draw[dashed] (H)--(B);
     \draw[dashed] (B)--(0,0)-- (C);
     \draw[dashed] (C)--(J);
     \draw[dashed] (A)--(F);
   \node[draw,circle,fill=white] at (0,0) (0) {\phantom{$1$}};
    \end{tikzpicture}}
     \caption{The lush hedge $\bftree$.}
     \label{fig:BFtree2}
    \end{center}
    \end{subfigure}
\begin{subfigure}[b]{0.44\textwidth}
\begin{center}
     \scalebox{.4}{
     \begin{tikzpicture}
 	 \node[draw,circle,fill=white] at (-4,-4) (A) {\phantom{$1$}};
	 \node[draw,circle,fill=white] at (0,-4) (B) {\phantom{$1$}};
 	 \node[draw,circle,fill=white] at (4,-4) (C) {\phantom{$1$}};
 	 \node[draw,circle,fill=white] at (-16/3,-8) (E) {\phantom{$1$}};
 	 \node[draw,circle,fill=green_] at (-8/3,-8) (F) {\phantom{$1$}};
 	 \node[draw,circle,fill=white] at (-4/3,-8) (G) {\phantom{$1$}};
 	 \node[draw,circle,fill=green_] at (4/3,-8) (H) {\phantom{$1$}};
 	 \node[draw,circle,fill=white] at (12/3,-8) (I) {\phantom{$1$}};
 	 \node[draw,circle,fill=green_] at (-12/3,-8) (J) {\phantom{$1$}};
       \draw[color=c-12,very thick] (0,0)--(A)--(E);
     \draw[very thick,color=cr](B)--(G);
     \draw[very thick,color=cr](C)--(I);
     \draw[dashed] (H)--(B);
     \draw[dashed] (B)--(0,0)-- (C);
     \draw[dashed] (A)--(J);
     \draw[dashed] (A)--(F);
   \node[draw,circle,fill=white] at (0,0) (0) {\phantom{$1$}};
    \end{tikzpicture}}
     \caption{A hedge which is not lush.
     }
     \label{fig:non-lush-hedge}
    \end{center}
\end{subfigure}
   \subfloat{\scalebox{.6}{
        \begin{tikzpicture}
 	       \node[draw=none] at (0,-6) {\phantom{$\ell$}};
 	       \node[draw=none] at (0,-5.4) {$|V_0|=6$};
 	        \node[draw=none] at (0,-4) {$\ell_1=3$};
 	        \node[draw=none] at (0,-2.7) {$|V_1|=3$};
 	        \node[draw=none] at (0,-1.35) {$\ell_2=2$};
 	        \node[draw=none] at (0,-0) {$|V_2|=1$};
 	        \node[draw=none] at (0,1.35) {$\ell_3=1$};
        \end{tikzpicture}}
     }
\caption{Two hedges with $H=2$ and $\ell_1=3,\ell_2=2$ and $\ell_3=1$. The solid edges show spanning subgraphs both isomorphic to $3{\color{green_!70!black}P_1}\cup 2{\color{cr!70!black}P_2}\cup 1{\color{c-12!70!black}P_3}$. The dashed edges encode the amount of branching and the number of dashed edges from height $h$ to height $h-1$ is equal to $\ell_h$ for $h\in [H]$.}\label{fig:H=2}
\end{figure}
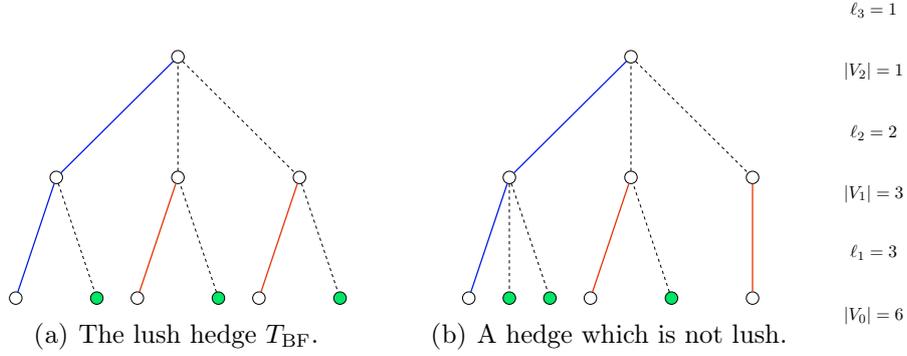

\begin{definition}
    A hedge $T$ is called \emph{lush} %
    if any vertex
    of height at least~$2$ has at least three children, and any vertex of height~$1$  has at least two children. In other words, for every non-leaf vertex $v$ of~$T$, we have
    $$\deg_T(v) \geq \begin{cases}
     3, & \text{if } \h(v)=1 \text{ or } \h(v)=H,\\
     4, & \text{if $H>2$ and } 2\leq \h(v) < H.
    \end{cases}$$
\end{definition} 
We illustrate these definitions with some examples.
\begin{example}
The path $P_n$ with $n\geq 2$ is a hedge of height $n-1$, with $\ell_n(P_n)=1$ and $\ell_i(P_n)=0$ for all $i\in [n-1]$.  It is not lush as $\deg_{P_n}(u)\leq 2$ for all $u\in V(P_n)$. The graph $P_1$ is a lush hedge.

A perfect binary tree %
with height $H\geq 2$ is a hedge having $\ell_i=|V_i|=2^{H-i}$ for $i\in [H]$. It is not lush as the degree of the root is only 2.

A hedge of height $1$ on $n$ vertices is the star graph $K_{1,n-1}$, and it is lush as long as $n\geq 3$.

The Barioli-Fallat tree $\bftree$, shown in Figures~\ref{fig:BFtree} and~\ref{fig:BFtree2}, is a lush hedge of height 2.  Moreover, $\bftree$ is the smallest lush hedge of height $2$, and has $\ell_1=3$, $\ell_2=2$ and $\ell_3=1$. %
\end{example}

A rooted tree $L$ of height $H\ge2$ is a lush hedge if and only if there exist $t \geq 3$ and lush hedges $L_1,\ldots,L_t$, all of height $H-1$, such that the root vertex of $L$ is adjacent to the root vertex of $L_i$ for $i=1,\dots,t$. Thus, each such lush hedge $L$ can be constructed recursively; see Figure~\ref{fig:LushHedge}. 

\begin{figure}[htb]
    \begin{tikzpicture}
   \node[draw,circle,fill=white] at (0,0) (D) {$r$};
     \draw (-1,-2)--(D)--(1,-2);
     \draw (-4,-2)--(D)--(4,-2);
     \draw (-2,-2)--(D)--(2,-2);
     \draw[fill=white] (-4,-3) ellipse (0.9cm and 1cm);
     \draw[fill=white] (-2,-3) ellipse (0.8cm and 1cm);
     \draw[fill=white] (2,-3) ellipse (0.6cm and 1cm);
     \draw[fill=white] (4,-3) ellipse (0.5cm and 1cm);
     \node[anchor=south,draw=none] at (-4,-3.5) {$L_1$};
     \node[anchor=south,draw=none] at (-2,-3.5) {$L_2$};
     \node[anchor=south,draw=none] at (2,-3.65) {$L_{t-1}$};
     \node[anchor=south,draw=none] at (4,-3.5) {$L_t$};
     \node[anchor=south,draw=none] at (0,-3.5) {$\cdots$};
    \end{tikzpicture}
    \caption{A lush hedge $L$ of height $H\geq 3$ with root vertex~$r$ and $\deg_L(r)=t$. Subtrees $L_i$, $i\in[t]$, are all lush hedges of height $H-1$.
    }\label{fig:LushHedge}
 \end{figure}
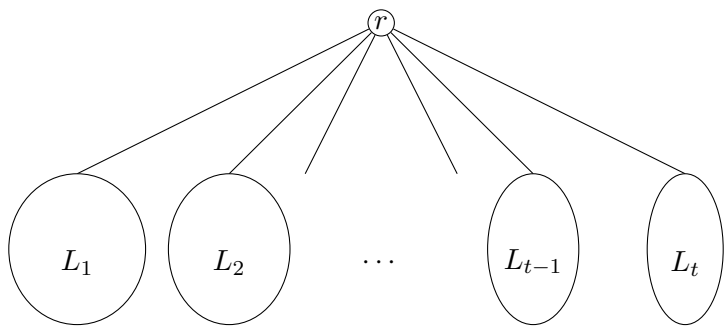

We now observe for future use that the high degree of branching required for a lush hedge implies that the values of $\ell_i$ are rapidly decreasing. 
\begin{lemma}\label{lem:decreasing-ells}
For every lush hedge $T$ with height $H$ we have
    \begin{equation*}
    \ell_i(T)\geq %
    2\sum\limits_{j = i + 1}^{H + 1} \ell_j(T)
\end{equation*}
for all $i\geq 2$.
\end{lemma}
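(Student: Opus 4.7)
The plan is to prove this by a short counting argument using two ingredients: a telescoping identity that writes $|V_i(T)|$ purely in terms of the $\ell_j(T)$, and the observation that in a lush hedge, $|V_{i-1}(T)|$ is at least $3\,|V_i(T)|$ whenever $i\ge 2$.

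First I would establish the telescoping identity
\[
|V_i(T)| \;=\; \sum_{j=i+1}^{H+1}\ell_j(T) \qquad \text{for } 0\le i\le H.
\]
This follows immediately from the definition $\ell_j(T)=|V_{j-1}(T)|-|V_j(T)|$ for $1\le j\le H$ together with $\ell_{H+1}(T)=1=|V_H(T)|$ (using the convention $|V_{H+1}(T)|=0$): the sum on the right telescopes to $|V_i(T)|-|V_{H+1}(T)|=|V_i(T)|$.

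Next I would verify the structural inequality $|V_{i-1}(T)|\ge 3\,|V_i(T)|$ for all $i\ge 2$. The key point is that in a hedge every leaf is at the same distance $H$ from the root, so a vertex $v$ has height $i$ precisely when its distance to the root equals $H-i$; in particular, every vertex of $V_{i-1}(T)$ is a child of a unique vertex of $V_i(T)$, giving $|V_{i-1}(T)|=\sum_{v\in V_i(T)} (\deg_T(v)-[v\ne r])$, where the bracket is 1 unless $v$ is the root. For $i\ge 2$ every $v\in V_i(T)$ has height at least $2$, hence, by the definition of a lush hedge, at least $3$ children (the root contributes $\ge 3$ children directly, and a non-root vertex at height $\ge 2$ has $\deg_T(v)\ge 4$, which leaves $\ge 3$ children after removing the edge to the parent). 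Thus $|V_{i-1}(T)|\ge 3\,|V_i(T)|$.

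Combining the two steps, for $i\ge 2$ we obtain
\[
\ell_i(T) \;=\; |V_{i-1}(T)|-|V_i(T)| \;\ge\; 3|V_i(T)|-|V_i(T)| \;=\; 2|V_i(T)| \;=\; 2\sum_{j=i+1}^{H+1}\ell_j(T),
\]
which is the claimed inequality. There is no real obstacle here; the only care needed is the small case analysis of whether the vertex in $V_i(T)$ is the root (so has no parent edge to subtract) or not, both of which yield at least three children under the lushness hypothesis for $i\ge 2$.
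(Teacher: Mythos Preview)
Your proof is correct and follows essentially the same route as the paper's: the paper's argument is the one-line version ``since $T$ is lush, $|V_{i-1}(T)|\ge 3|V_i(T)|$ for $i\ge 2$, hence $\ell_i(T)\ge 2|V_i(T)|=2\sum_{j=i+1}^{H+1}\ell_j(T)$,'' and you have simply spelled out both steps (the telescoping identity and the child-count justification for the factor of~$3$) in detail.
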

\begin{proof}
  Since $T$ is lush, for $i\ge2$ we have $|V_{i-1}(T)|\ge 3|V_i(T)|$. Hence, $\ell_i(T)\ge 2|V_i(T)|=2\sum_{j=i+1}^{H+1} \ell_j(T)$.
\end{proof}

\subsection{Matrix sets and weights} When discussing the uniqueness of realizations in $\S(T)$ implied by partial (or complete) information about the  eigenvalues, we cannot avoid the inherent ambiguity arising from diagonal similarity. While only conjugation by diagonal matrices with $1$ or $-1$ on the diagonal preserves symmetry, conjugation by any invertible diagonal matrix preserves combinatorial symmetry in the following sense. 

A matrix $A=\begin{pmatrix} a_{i,j} \end{pmatrix}\in \bR^{n \times n}$ is called \emph{combinatorially symmetric} if $a_{i,j} \neq 0$ implies $a_{j,i} \neq 0$ for all $i \neq j$, see e.g.~\cite{MR3857534,Maybee}. Given an $n \times n$ combinatorially symmetric matrix $A$, we let $G(A)$ be the graph with vertex set $[n]$ and edges $\{i,j\}$ whenever $i \neq j$ and $a_{i,j} \neq 0$. The graph $G(A)$ is referred to as the \emph{graph of~$A$}. We will consider combinatorially symmetric matrices that satisfy $a_{i,j}a_{j,i}>0$, as those are similar to a symmetric matrix under diagonal similarity. This can be easily established by induction and can also be deduced indirectly from the work in \cite{MR3716248}.   We %
denote the set of all such combinatorially symmetric matrices corresponding to a tree $T$ by
\begin{align*}
  \cs(T) &:= \{A\in \bR^{|T|\times |T|} \colon G(A)=T,\; {{a_{i,j}a_{j,i}>0\;\, \textrm{if}\, \{i,j\}\in E(T)}}\}.
\end{align*}
 For most of this paper we will work with matrices in $\cs(T)$ rather than $\S(T)$. Of course, since $\S(T)\subseteq \cs(T)$, the results we establish for $\cs(T)$ also apply to $\S(T)$.

Given a tree $T$, two matrices $A=(a_{i,j}), B=(b_{i,j}) \in \cs(T)$ are similar under a diagonal similarity if and only if  $a_{i,i}=b_{i,i}$  and $a_{i,j}a_{j,i}=b_{i,j}b_{j,i}$ for all $\{i, j\} \in E(T)$. 
Diagonal similarity is an equivalence relation, and uniqueness of realizations considered in this work will be defined up to an equivalence class for diagonal similarity. This inherent non-uniqueness in $\cs(T)$ leads us to consider the following objects.
We call a function \[ w\colon V(T)\cup E(T)\to \bR,\quad {w(e)> 0}\text{ for }e\in E(T)\]
a \emph{weight function} on $T$, and write $W(T)$ for set of all such weight functions on $T$. (Note that the weight $w(i)$ of a vertex $i\in V(T)$ is unconstrained, so can be any real number.) We slightly abuse notation by writing $w(i,j)$ instead of $w(\{i,j\})$ when $\{i,j\}\in E(T)$. Given $w\in W(T)$, consider the set
\begin{align*}
  \ns(T):= \{A=\begin{pmatrix}
    a_{i,j}
  \end{pmatrix}\in \cs(T)\colon & a_{i,j}a_{j,i}=w(i,j),\,\{i,j\}\in E(T),\;\\
  &a_{i,i}=w(i),\; i\in V(T)\}.
\end{align*}
Note that any matrix $A\in \cs(T)$ lies in $\ns(T)$, for a unique weight $w$ on $T$. Indeed, $\ns(T)$ is precisely the equivalence class of $\R(T)$ (with respect to the equivalence relation of diagonal similarity) which contains $A$. In this case we say that $w_A:=w$ is \emph{the weight of~$A$}.
For simplicity of exposition, we will often define an equivalence class through a representative satisfying  $a_{i,j}=1$ or $a_{j,i}=1$ for all $\{i, j\} \in E(T)$, and when convenient we will assume $a_{i,j}=1$ if $\{i,j\} \in E(G)$ and $i >j$.  

For $A\in \cs(T)$  and a subgraph $T_0\subseteq T$ with vertex set $V_0$, the matrix $A[T_0]=A[V_0]$ is the principal submatrix of~$A$ whose rows and columns are indexed by $V_0$. Similarly, if ${\bf v}\in\mathbb{R}^{|T|}$ has entries labelled by $V(T)$, then ${\bf v}[T_0]={\bf v}[V_0]\in\mathbb{R}^{|T_0|}$ is the vector whose entries are indexed by $V_0$.

\section{Duplicating and collapsing branches, and the path-to-hedge construction}\label{Branching}

Duplicating and collapsing branches of graphs are opposite operations that  expand and contract graphs. 
If these operations can be performed so that the eigenvalues of matrices corresponding to the expanded graph %
and the original graph coincide, we can construct a matrix for the expanded graph that has eigenvalues with high multiplicities. 
Theorem~\ref{thm:arbitrary M} uses this technique to construct a large matrix~$A$, whose graph is a hedge, from a matrix~$C$ whose graph is a path, so that $\sigma(A)$ consists of copies of the spectra of various principal submatrices of $C$. %
We call this the path-to-hedge construction.
At the end of the section, we elaborate these operations by examples. 

\subsection{Duplicating and collapsing branches in trees}\label{sec:branching-collapsing}

We begin with branching operations on trees and on symmetric matrices corresponding to tree. These were introduced in~\cite{JS08}. 

Let $T$ be a tree and $v$ a vertex of~$T$. We call a connected component of $T\setminus v$ a \emph{branch of~$T$ at $v$}. Let $B_0$ be a branch of~$T$ at $v$ and let $\{v,b_0\}$ be the edge joining $v$ to $B_0$. Recall~\cite{JS08} that for $s\ge1$, \emph{the $s$-branch duplication of $B_0$ at $v$ in~$T$} is the tree $T'$ obtained from $T$ by adding $s$ disjoint copies $B_1,\dots,B_s$ of $B_0$ and the edges $\{v,b_i\}$ for $i\in [s]$, where $b_i\in V(B_i)$ is the copy of $b_0$.

Given $T,B_0,v,b_i,s, T'$ as above and $A=\begin{pmatrix}
  a_{i,j}
\end{pmatrix}\in \S(T)$, for any nowhere-zero vector ${\bf z}=(z_0,\dots,z_s)\in \bR^{s+1}$ with ${\bf z}^\top{\bf z}=1$, we can form a matrix $A'=\begin{pmatrix}  a'_{i,j}
\end{pmatrix}\in \S(T')$, called the \emph{$s$-summand duplication of $A[B_0]$ at $v$ relative to ${\bf z}$}, by defining \[\text{$A'[T\setminus B_0]=A[T\setminus B_0]$, $A'[B_i]=A[B_0]$, $a'_{v,b_i}=z_i a_{v,b_0}$ for $0\le i\le s$.}\] As shown in~\cite{JS08}, the characteristic polynomials of these matrices are related by $p_{A'} = p_A \cdot (p_{A[B_0]})^s$. 
Hence, as multisets, we have
\begin{equation}\label{eq:branching-spectrum}
    \spec(A')=\spec(A)\cup s \spec(A[B_0]).
\end{equation} 
As explained above, we will work with matrices in $\cs(T)$, which need not be symmetric, and their weights $w\in W(T)$. We define a more general branching operation in this context, as follows. 

\begin{definition}\label{def:weight-summand-duplication}
Let $T,B_i,v,b_i,s,T'$ be as above, $w\in W(T)$ a weight function on $T$, and  ${\bf t}=(t_0,\dots,t_s)\in \bR^{s+1}$ a positive  vector with $t_0+\dots+t_s=1$.
A weight function $w' \in W(T')$ is  \emph{the $s$-summand duplication of $w$ 
at $v$ for $B_0$ relative to ${\bf t}$} if: 
\begin{itemize}
    \item  $w'$ agrees with $w$ on $V(T)\cup E(T)\setminus \{\{v,b_0\}\}$;
    \item  on $B_i$, $w'$ agrees with $w$ on $B_0$, once the vertices and edges of $B_i$ are identified with those of $B_0$; and
    \item $w'(v,b_k)=t_kw(v,b_0)$ for $k=0,1,\dots,s$.
\end{itemize}
\end{definition}

We can also illustrate $s$-summand duplication at the level of non-symmetric matrices rather than weights, via representatives of equivalence classes. Recall that for any two matrices $A=(a_{i,j})$ and $B$ the {\em Kronecker (or tensor) product of~$A$ and $B$}, denoted by $A \otimes B$, is defined to be $A \otimes B = (a_{i,j}B)$. Keeping the same notation as in Definition~\ref{def:weight-summand-duplication}, choose any $A=\begin{pmatrix}  {a}_{i,j}
\end{pmatrix}\in \cs_w(T)$. Then 
\[ A'=\begin{pmatrix}A[T\setminus B_0]&w(v,b_0){\bf t}^\top\otimes E_{v,b_0}    \\{\bf 1}_{s+1}\otimes E_{b_0,v}&I_{s+1}\otimes A[B_0]\end{pmatrix} \in \cs_{w'}(T').
\]  The matrix $A'=\begin{pmatrix} a'_{i,j}
\end{pmatrix}\in \cs_{w'}(T')$ is called the  \emph{$s$-summand duplication of $A[B_0]$ at $x$ relative to ${\bf t}$}. 
Note:
\begin{gather*}\text{$A'[T\setminus B_0]= A[T\setminus B_0]$, $A'[B_i]=A[B_0]$,}\\ a'_{i,j}=t_k w_A(v,b_0) \text{ if } i<j \text{ and } \{i,j\}=\{v,b_k\}.
\end{gather*}

We now verify that equality~\eqref{eq:branching-spectrum} still holds for this definition.

\begin{proposition}\label{prop:branch-spec}
  Let $v$ be a vertex in a rooted tree $T$, $B_0$ be a branch of~$T$ at $v$, $s\ge 0$, and  ${\bf t}=(t_0,\dots,t_s)\in \bR^{s+1}$ a positive  vector with $t_0+\dots+t_s=1$. If $w' \in W(T')$ is the $s$-summand duplication of $w \in W(T)$ at $v$ for $B_0$ relative to ${\bf t}$,
  then \[\sigma(A')=\sigma(A)\cup s\sigma(A[B_0])\] for any $A \in \cs_w(T)$ and $A' \in \cs_{w'}(T')$.
\end{proposition}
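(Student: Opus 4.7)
The plan is to reduce to the symmetric case, where the identity $\sigma(A') = \sigma(A) \cup s\sigma(A[B_0])$ is already recorded as equation~\eqref{eq:branching-spectrum}. The bridge between the two settings is the discussion just before Definition~\ref{def:weight-summand-duplication}: matrices in $\cs(T)$ with the same weight form a single equivalence class under diagonal similarity, every such class contains a symmetric representative, and diagonally similar matrices share a spectrum. So it suffices to exhibit one pair $(\tilde A, \tilde A')$ of symmetric representatives for which the desired spectral identity holds, and then invoke diagonal similarity to transfer it to arbitrary $A \in \cs_w(T)$ and $A' \in \cs_{w'}(T')$.

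First I would fix a symmetric representative $\tilde A \in \S(T) \cap \cs_w(T)$, for example by taking $\tilde a_{i,j} = \tilde a_{j,i} := \sqrt{w(\{i,j\})}$ for each $\{i,j\}\in E(T)$ and $\tilde a_{i,i} := w(i)$. Next I would choose the positive unit vector ${\bf z} := (\sqrt{t_0}, \dots, \sqrt{t_s})$, so that ${\bf z}^\top{\bf z} = t_0 + \dots + t_s = 1$, and form $\tilde A' \in \S(T')$ as the classical symmetric $s$-summand duplication of $\tilde A[B_0]$ at $v$ relative to ${\bf z}$, in the sense of~\cite{JS08}. Equation~\eqref{eq:branching-spectrum} then gives
$$\sigma(\tilde A') = \sigma(\tilde A) \cup s\sigma(\tilde A[B_0]).$$

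It remains to verify that $\tilde A'$ lies in $\cs_{w'}(T')$, which reduces to a direct comparison with Definition~\ref{def:weight-summand-duplication}. The diagonal and internal edge entries of $\tilde A'$ coincide with those of $\tilde A$ on $T \setminus B_0$ and with those of $\tilde A[B_0]$ on each copy $B_k$, so they match $w'$ on those vertices and edges. The only entries that need to be checked are those of the edges $\{v, b_k\}$ for $k = 0, \dots, s$, where
$$\tilde a'_{v,b_k}\,\tilde a'_{b_k,v} = z_k^2\, \tilde a_{v,b_0}\tilde a_{b_0,v} = t_k\, w(\{v,b_0\}) = w'(\{v,b_k\}),$$
so indeed $\tilde A' \in \cs_{w'}(T')$, completing the argument. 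The only delicate point is the bookkeeping between the unit vector ${\bf z}$ required by the original symmetric construction and the probability vector ${\bf t}$ parametrising the weight version of duplication; the identification $z_k = \sqrt{t_k}$ is what makes the two conventions compatible, and once that is in place the rest is a formal transfer across the diagonal similarity equivalence classes.
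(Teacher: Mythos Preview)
Your argument is correct, and it takes a genuinely different route from the paper's own proof. You reduce to the symmetric case by choosing a symmetric representative $\tilde A$ in the equivalence class $\cs_w(T)$, applying the classical branching identity~\eqref{eq:branching-spectrum} to $\tilde A$ and its symmetric duplication $\tilde A'$ relative to ${\bf z}=(\sqrt{t_0},\dots,\sqrt{t_s})$, verifying that $\tilde A'$ lands in $\cs_{w'}(T')$, and then invoking the fact that each $\cs_w$-class is a single diagonal-similarity orbit (and hence cospectral). The paper instead gives a self-contained proof: it fixes a canonical representative $A'\in\cs_{w'}(T')$ and exhibits an explicit similarity $VA'V^{-1}=\tilde A\oplus(I_s\otimes A[B_0])$, where $V=I\oplus(UD\otimes I)$ with $D=\operatorname{diag}(\sqrt{t_0},\dots,\sqrt{t_s})$ and $U$ orthogonal with $UD{\bf 1}_{s+1}={\bf e}_1$. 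Your approach is slightly more economical because it simply reuses the already-established symmetric result, while the paper's approach has the virtue of displaying the similarity directly (and in particular re-proves the symmetric case rather than citing it). One small point worth making explicit in your write-up: you also need $\sigma(A[B_0])=\sigma(\tilde A[B_0])$, which follows because both lie in $\cs_{w|_{B_0}}(B_0)$; this is implicit in your final sentence but deserves a word.
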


\begin{proof}
  Let $w=w_A$. It suffices to show that the matrices $$A'=\begin{pmatrix}A[T\setminus B_0]&w(v,b_0){\bf t}^\top\otimes E_{v,b_0}   \\{\bf 1}_{s+1}\otimes E_{b_0,v}& I_{s+1}\otimes A[B_0]\end{pmatrix}\in \cs_{w'}(T')$$ and $\tilde A\oplus (I_s\otimes A[B_0])$ are similar, where 
  \[\tilde A:=\begin{pmatrix} A[T\setminus B_0]&w(v,b_0)E_{v,b_0}\\E_{b_0,v}&A[B_0]\end{pmatrix}\in \cs_w(T).\]Let $D$ be the diagonal matrix with diagonal $(\sqrt{t_0},\dots,\sqrt{t_{s}})$. By assumption, $D{\bf 1}_{s+1}$ is a unit vector, so there exists an orthogonal matrix $U$ with $UD{\bf 1}_{s+1}={\bf e}_1$.  Note that $UD^{-1}{\bf t}=UD {\bf 1}_{s+1}={\bf e}_1$.  Let \[
V=I_{|T\setminus B_0|}\oplus (UD\otimes I_{|B_0|}).
\]
Then
\begin{align*}
&VA'V^{-1}\\
&=\begin{pmatrix}I\\& UD\otimes I\end{pmatrix}
\begin{pmatrix}A[T\setminus B_0]&w(v,b_0) {\bf t}^\top\otimes E_{v,b_0}   \\{\bf 1}_{s+1}\otimes E_{b_0,v}&I_{s+1}\otimes A[B_0]\end{pmatrix}
\begin{pmatrix}I\\&D^{-1}U^\top\otimes I   \end{pmatrix}\\
&=\begin{pmatrix}A[T\setminus B_0]&w(v,b_0) {\bf e}_1^\top\otimes E_{v,b_0}   \\{\bf e}_1\otimes E_{b_0,v}&I_{s+1}\otimes A[B_0]\end{pmatrix}\\
&=\begin{pmatrix} A[T\setminus B_0]&w(v,b_0)E_{v,b_0}&{\bf 0}^\top\\E_{b_0,v}&A[B_0]&0\\{\bf 0}&0&I_s\otimes A[B_0]\end{pmatrix}=\tilde{A}\oplus (I_s\otimes A[B_0]).\qedhere
\end{align*}
\end{proof}
Using repeated $s$-summand duplication will be the main constructive technique in this work to produce matrices with high multiplicities of eigenvalues. Conversely, presented with a matrix $A \in \cs(T)$ we will want to know if this matrix was produced by $s$-duplication, and reverse that operation. This reverse process is called ``collapsing''. 

\begin{definition}%
  Let $w$ be a weight function on $T$, where $T$ is a tree, and
let $v$ be a vertex of~$T$. Suppose $B_0,B_1,\dots,B_s$ are mutually isomorphic branches of~$T$ at $v$, with connecting edges $\{v,b_i\}$ for $0\le i\le s$, where the isomorphisms $\theta_i:B_i\to B_0$ have $\theta_i(b_i)=b_0$, and $w|_{B_i}=w|_{B_0}\circ \theta_i$.
Then we say that the branches $B_0,B_1,\dots,B_s$ are \emph{collapsible for $w$}. In this case, we define the \emph{collapsed tree} $T^\circ:=T\setminus(B_1\cup\dots\cup B_s)$ and the \emph{collapsed weight}  $w^\circ\in W(T^\circ)$ which agrees with the restriction of $w$ to $T^{\circ}$, except for 
$$w^\circ(v,b_0):=\sum_{i=0}^s w(v,b_i).$$
\end{definition}

Observe that if $w^\circ\in W(T^\circ)$ is the weight obtained from $w\in W(T)$ by collapsing $B_0,\dots,B_s$, as above, then $w$ may be recovered from $\{w(v,b_i)\}_i$ and $w^\circ$ by $s$-summand duplication of $B_0$ relative to ${\bf t}=\begin{pmatrix}
  \frac{w(v,b_0)}{w^\circ(v,b_0)},\ldots,\frac{w(v,b_s)}{w^\circ(v,b_0)}
\end{pmatrix}\in \bR^{s+1}$. 
By Proposition~\ref{prop:branch-spec},
if $A^\circ\in \ns[w^\circ](T^\circ)$ and $A\in \ns(T)$, then as multisets, we have 
\begin{equation}\label{eq:collapsed-spectrum}
    \text{$s\spec(A[B_0])\subseteq \spec(A)$ and $\spec(A^\circ) = \spec(A)\setminus s\spec(A[B_0])$}. 
    \end{equation}

\begin{definition}
  Let $A\in \cs(T)$, where $T$ is a tree. We say branches $B_0,B_1,\dots,B_s$ of~$T$ (at some common vertex $v$) are \emph{collapsible in $A$} if they are collapsible for the weight $w_A$.
\end{definition}

\begin{definition}\label{def:collapsing-trees-matrices}
  Let $T$ be a tree and $k\ge1$. %
  The tree $T^\circ$ obtained by successively collapsing $B_0,B_1,\dots,B_s$, for each vertex $x\in V(T)$ which is joined to  \pendentpath{k}s $B_0,B_1,\dots,B_s$ in~$T$, is said to be obtained from $T$ by \emph{collapsing \pendentpath{k}s}.
  
Suppose $k\ge1$ and $w\in W(T)$ is a weight with the property that, whenever $B_{i_1}, \ldots, B_{i_t}$ are \pendentpath{k}s in~$T$ which meet a common vertex, as above, the branches $B_{i_1}, \ldots, B_{i_t}$ are collapsible for $w$. We then say that \emph{we can collapse \pendentpath{k}s} in $w$, and the weight $w^\circ\in T^\circ$ obtained by performing all such collapses (for all such collections of \pendentpath{k}s in~$T$) is said to be \emph{obtained from $w$ by collapsing \pendentpath{k}s}. We then say we have \emph{collapsed $\ell$ \pendentpath{k}s}, where $\ell$ is the total number of copies of $P_k$ in $T\setminus T^{\circ}$.
  
  If $A\in \cs(T)$, and we can collapse \pendentpath{k}s in $w_A$, then we say that \emph{we can collapse \pendentpath{k}s in $A$}, and a matrix $A^\circ$ is said to be obtained from $A$ \emph{by collapsing \pendentpath{k}s} if $w_{A^\circ}$ is obtained from $w_A$ by collapsing \pendentpath{k}s.
  
  In the case $k=1$, we refer to these relationships as \emph{collapsing leaves} rather than collapsing \pendentpath{1}s.
\end{definition}

\subsection{The path-to-hedge Construction}
Every hedge $T$ of height $H$ can be constructed from a path $P_{H+1}$ by repeatedly duplicating branches that are pendent paths. In this section, we explain this construction on the level of trees, and then turn to its implications for matrices. 

On the tree level, we start with $P_{H+1}$, and then duplicate $P_{H}$ at the root of the path. 
We then iterate this process, duplicating paths starting at vertices at successively smaller heights. More formally:

\begin{definition}\label{def:chain-subtrees}
  Let $T$ be a hedge of height $H$. For each non-leaf vertex $v$ of~$T$, choose a distinguished child $v^*$ of $v$. We define a  chain of height $H$ hedges
  \[T=T^{(0)}\supseteq T^{(1)}\supseteq\dots\supseteq T^{(H-1)}\supseteq T^{(H)}=P_{H+1} \]
  as follows: for $0\le h\le H$, let $T^{(h)}=T[V^{(h)}]$ be the induced subgraph of~$T$ on the vertex set \[V^{(h)}=(V_H(T)\cup \dots\cup V_{H-h}(T))\cup \{v^*, v^{*(2)},\dots,v^{*(H-h)}: v\in V_{H-h}(T)\},\]
  where $v^{*(k)}:=(v^{*(k-1)})^*$ and $v^{*(1)}:=v^*$.
\end{definition}
The ambiguity in this chain of subgraphs arising from the various possible choices of $v^*$ will not be important, so we fix some arbitrary choice for the rest of this paper, and work with the resulting chain $(T^{(h)})_{0\le h\le H}$. (In examples, we typically choose $v^*$ to be the child of $v$ with the numerically smallest label). Indeed, if we make a different choice of distinguished children, say $\tilde v^*$, yielding a different chain of subgraphs $(\tilde T^{(h)})_{0\le h\le H}$, then the graphs $T^{(h)}$ and $\tilde T^{(h)}$ are clearly isomorphic for each $h$: they are both given by taking the induced subgraph of~$T$ on its vertices of height at least $H-h$, and appending induced pendent paths to each of its leaves to form a hedge of height $H$. The only difference between $T^{(h)}$ and $\tilde T^{(h)}$ is that they are isomorphic induced subgraphs of~$T$ on possibly different vertex sets. 

Observe that for $0\le h<H$, the graph $T^{(h+1)}$ is obtained from $T^{(h)}$ by collapsing \pendentpath{(h+1)}s. Equivalently, $T^{(h)}$ is obtained from $T^{(h+1)}$ by successive branch duplications; more specifically, by $s(v)$-branch duplication of the pendent path at $v$, for every $v\in V_{H-h}(T)$, where $v$ has $s(v)+1$ children in~$T$.
For examples see Figures~\ref{fig:H=2} and~\ref{fig:T31}. Note that on Figure~\ref{fig:T31} we have $T^{(3)}=P_4$, $T^{(2)}$ is the graph spanned by $2P_3\cup P_4$, $T^{(1)}$ is the graph spanned by $6 P_2\cup 2{P_3}\cup 1{P_4}$ and $T^{(0)}=T$. 

\begin{example}\label{ex:BFtreeBranching}
 Consider the two induced subgraphs of the Barioli-Fallat tree $\bftree$ shown in Figures~\ref{fig:P3} and~\ref{fig:T^1},  which are related by branching processes to $\bftree$, shown in Figure~\ref{fig:T2=BFT}.  Indeed, $T^{(1)}$ is obtained from $T^{(2)}$ by $2$-branch duplication of $B=T^{(2)}[\{2,3\}]$ at $v=1$, and $T^{(0)}=\bftree$ is obtained from $T^{(1)}$ by three successive $1$-branch duplications, of the subgraphs of $T^{(1)}$ on $\{3\}$, $\{5\}$ and $\{7\}$, at the vertices $2$, $4$ and $6$, respectively. In terms of collapsing, it is easy to see that for $h=0,1$, the graph $T^{(h+1)}$ is obtained from $T^{(h)}$ by collapsing pendent $(h+1)$-paths.
 
\begin{figure}[htb]
 \tikzset{
    every node/.style={draw, circle, fill=white, inner sep=1pt}
    }
  \begin{subfigure}{0.3\textwidth}
\centering{%
    \begin{tikzpicture}[scale=0.5]
   \node[draw,circle,fill=white] at (0,0) (D) {$1$};
    \node[draw,circle,fill=white] at (-2,-2) (A) {$2$};
    \node[draw,circle,fill=white] at (-2.5,-4) (E) {$3$};
       \draw(D)--(A)--(E);
   \end{tikzpicture}}
    \caption{$T^{(2)}=P_3$}\label{fig:P3}
     \end{subfigure}
     \begin{subfigure}{0.3\textwidth}
\centering{%
     \begin{tikzpicture}[scale=0.5]
    \node[draw,circle,fill=white] at (0,0) (D) {$1$};
    \node[draw,circle,fill=white] at (-2,-2) (A) {$2$};
    \node[draw,circle,fill=white] at (-2.5,-4) (E) {$3$};	
    \node[draw,circle,fill=white] at (0,-2) (B) {$4$};
    \node[draw,circle,fill=white] at (-0.5,-4) (G) {$5$};
    \node[draw,circle,fill=white] at (2,-2) (C) {$6$};
    \node[draw,circle,fill=white] at (1.5,-4) (I) {$7$};
       \draw(D)--(A)--(E);
      \draw(D)--(B)--(G); 
      \draw(D)--(C)--(I);
    \end{tikzpicture}}
    \caption{$T^{(1)}$}\label{fig:T^1}
    \end{subfigure}
    \begin{subfigure}{0.35\textwidth}\centering{\begin{tikzpicture}[scale=0.5]
   \node[draw,circle,fill=white] at (0,0) (D) {$1$};
    \node[draw,circle,fill=white] at (-2,-2) (A) {$2$};
    \node[draw,circle,fill=white] at (-2.5,-4) (E) {$3$};
    \node[draw,circle,fill=white] at (0,-2) (B) {$4$};
    \node[draw,circle,fill=white] at (-0.5,-4) (G) {$5$};
    \node[draw,circle,fill=white] at (2,-2) (C) {$6$};
    \node[draw,circle,fill=white] at (1.5,-4) (I) {$7$};
    \node[draw,circle,fill=white] at (-1.5,-4) (F) {$8$};
    \node[draw,circle,fill=white] at (0.5,-4) (H) {$9$};
    \node[draw,circle,fill=white] at (2.5,-4) (J) {\footnotesize$10$};
        \draw(D)--(A)--(E);
        \draw(B)--(G); 
        \draw(C)--(I);
        \draw (H)--(B)--(D)--(C)--(J);
        \draw (A)--(F);
    \end{tikzpicture}}\caption{$T^{(0)}=\bftree$}\label{fig:T2=BFT}\end{subfigure}
    \caption{Subgraphs $T^{(2)}$ and $T^{(1)}$ of the Barioli-Fallat tree $T^{(0)}=\bftree$.}\label{fig:BFtreeInduced}
\end{figure}
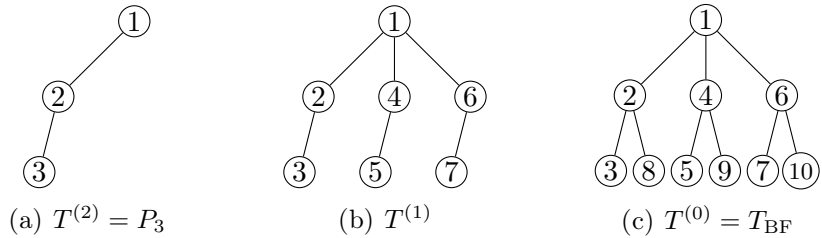    %
\end{example}

We now turn to matrices, starting with certain matrices in $\cs(P_{H+1})$. Given real numbers $a_k$ for $k\ge1$ and $b_k>0$ for $k\ge2$, let $C_n$ be the $n\times n$ matrix defined recursively by
\begin{equation}\label{eq:recursiveM}
 C_1:=(a_1)\quad\text{and}\quad C_{k+1}:=
  \begin{pmatrix}
    a_{k+1}&b_{k+1}{\bf e}_1\\{\bf e }_1^\top&C_{k}
  \end{pmatrix},\; k\ge 1
  \end{equation}
where ${\bf e}_1:=(1,0,0,\dots,0)\in \bR^k$. Hence, $C_n$ is a tridiagonal matrix with main diagonal $(a_n,a_{n-1},\dots,a_1)$, positive super-diagonal $(b_n,b_{n-1},\dots,b_2)$,
and all sub-diagonal elements equal to $1$. For example,
$$C_3=\left(
\begin{array}{ccc}
 a_3 & b_3 & 0   \\
 1   & a_2 & b_2 \\
 0   & 1   & a_1 \\
\end{array}
\right).$$

As we will follow our process with a small running example, let us choose concrete $C_i$, $i=1,2,3$, that will be used in the example.

\begin{example}\label{ex:Ms}
 Let $a_1=2$, $a_2=4$, $a_3=8$, $b_2=3$ and $b_3=20$. Then
 \[C_1=(2), \quad C_2=\left(
\begin{array}{cc}
 4 & 3 \\
 1 & 2 \\
\end{array}
\right), \quad C_3=\left(
\begin{array}{ccc}
 8 & 20 & 0 \\
 1 & 4 & 3 \\
 0 & 1 & 2 \\
\end{array}
\right).\]
We remark for future use that $\sigma(C_2)=\{1,5\}$ and $\sigma(C_3)=\{0,3,11\}$.
\end{example}

\begin{definition}
  Let $T$ be a hedge of height~$H$ and $w\in W(T)$. Let $w'\in W(P_{H+1})$, where, as usual, $P_{H+1}$ is viewed as a hedge of height~$H$. For $v\in V(T)$, let us write $v'$ for the unique vertex of $P_{H+1}$ with the same height as $v$.  We say that the weight \emph{$w$ comes from $w'$ via the path-to-hedge construction} if:
  \begin{itemize}
      \item $w(v)=w'(v')$ for all $v\in V(T)$; and
      \item for every non-leaf vertex $v$ of~$T$, we have
      \[\sum_{u\in \children(v)} w(v,u)=w'(v',z)\]
      where $z$ is the child of $v'$ in $P_{H+1}$.
  \end{itemize}
  We write $\PHset(w',T)$ for the set of all such weights~$w$. %
  
  If $C\in \ns[w'](P_{H+1})$, $A\in \cs(T)$, and $w_A\in \PHset(w',T)$, then we say that \emph{$A$ comes from $C$ via the path-to-hedge construction}, and we write $\PHset(C,T)$ for the set of all such matrices $A$.
\end{definition}

\begin{proposition}\label{prop:collapsing}
Let $T$ be a hedge of height~$H$, and recall the sequence of hedges $T=T^{(0)}\supseteq\dots\supseteq T^{(H)}=P_{H+1}$ from Definition~\ref{def:chain-subtrees}. Suppose $A^{(h)}\in \cs(T^{(h)})$
for $0\le h\le H$, and that for $h\in[H]$, the matrix $A^{(h)}$ is obtained by collapsing \pendentpath{h}s in $A^{(h-1)}$. Then $A^{(0)}\in \PHset(A^{(H)},T)$. 

Moreover, for $h\in [H]$, we have $w_{A^{(h-1)}}|_B=w_{A^{(H)}}|_{B'}$ whenever $B$ is a \pendentpath{h} in $T^{(h-1)}$ and $Q$ is the \pendentpath{h} in $P_{H+1}$.
\end{proposition}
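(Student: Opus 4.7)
The plan is to first establish the ``moreover'' equalities by downward induction on $h$ from $h=H$ down to $h=1$, and then use them to verify the two conditions defining $\PHset(A^{(H)},T)$.

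For the base case $h=H$: the set $V_H(T)$ contains only the root, so every \pendentpath{H} $B$ in $T^{(H-1)}$ is attached to the root. Collapsibility of $A^{(H-1)}$ then forces the vertex and internal-edge weights of $B$ to coincide with those of the distinguished \pendentpath{H}, which survives as the unique \pendentpath{H} $Q$ of $T^{(H)}=P_{H+1}$; since the collapse only alters the attachment edge at the root, $w_{A^{(H-1)}}|_B = w_{A^{(H)}}|_Q$. For the inductive step, fix $h\in [H-1]$ and let $B$ be a \pendentpath{h} in $T^{(h-1)}$ attached to some $u\in V_h(T)$. Collapsibility at step $h$ yields $w_{A^{(h-1)}}|_B=w_{A^{(h-1)}}|_{B^{\circ}}$, where $B^{\circ}$ is the distinguished \pendentpath{h} at $u$ on vertices $u^*,u^{*(2)},\dots,u^{*(h)}$; these weights persist as $w_{A^{(h)}}|_{B^{\circ}}$. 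Now $\{u\}\cup B^{\circ}$ is a \pendentpath{(h+1)} in $T^{(h)}$, so the inductive hypothesis identifies its weights with those of the \pendentpath{(h+1)} $Q'$ in $P_{H+1}$; restricting this identification by discarding the apex $u$ gives the desired equality with the \pendentpath{h} $Q\subset P_{H+1}$.

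To verify condition (i) of $\PHset(A^{(H)},T)$, take $v\in V(T)$ with $\h(v)=k$. If $v$ lies on the distinguished spine $V(T^{(H)})$, then $v$ persists in every $T^{(h)}$ and its diagonal weight is unchanged by each collapse, so $w_{A^{(0)}}(v)=w_{A^{(H)}}(v')$, where $v'$ is the height-$k$ vertex of $P_{H+1}$. Otherwise $v$ is removed at some smallest step $h\in [H]$ and sits inside a \pendentpath{h} $B$ of $T^{(h-1)}$; the moreover claim applied to $B$ gives $w_{A^{(h-1)}}(v)=w_{A^{(H)}}(v')$, and vertex-weight preservation through steps $1,\dots,h-1$ extends this to $w_{A^{(0)}}(v)=w_{A^{(H)}}(v')$. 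For condition (ii), let $v$ be a non-leaf of $T$ with $\h(v)=k+1$ and children $u_1,\dots,u_{s+1}$ in $T$. Each edge $\{v,u_i\}$ connects heights $k+1$ and $k$, so it is neither an attachment edge nor an internal edge of any \pendentpath{h} collapsed at a step $h\le k$; hence $w_{A^{(k)}}(v,u_i)=w_{A^{(0)}}(v,u_i)$. At step $k+1$ these are precisely the attachment edges of the \pendentpath{(k+1)}s at $v$ in $T^{(k)}$, so by the definition of the collapsed weight,
\[
w_{A^{(k+1)}}(v,v^*) \;=\; \sum_{i=1}^{s+1} w_{A^{(k)}}(v,u_i) \;=\; \sum_{i=1}^{s+1} w_{A^{(0)}}(v,u_i).
\]
If $k+1=H$ then $v$ is the root and coincides with $v'$, so the right-hand side equals $w_{A^{(H)}}(v',z)$ as required. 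Otherwise $\{v,v^*\}$ is an internal edge of the \pendentpath{(k+2)} at the parent of $v$ in $T^{(k+1)}$, and the moreover claim for $h=k+2$ identifies $w_{A^{(k+1)}}(v,v^*)$ with the corresponding internal edge of $Q'$, which is exactly $\{v',z\}$ in $P_{H+1}$.

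The main technical hurdle is bookkeeping --- tracing which vertices and edges survive at each collapse step and correctly classifying every edge as either attachment or internal at the step where it matters (particularly for the edges $\{v, u_i\}$ in condition (ii)). Once that is handled, the required identities follow directly from the definition of collapsing, together with the downward inductive hypothesis that propagates the $P_{H+1}$ weights outward along each pendent path.
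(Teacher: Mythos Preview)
Your proof is correct. It differs from the paper's in organization: the paper proves the stronger statement $A^{(h)}\in\PHset(A^{(H)},T^{(h)})$ for every $h$ by a single downward induction (which immediately yields both the main claim at $h=0$ and the moreover part), whereas you first establish the moreover part by downward induction and then verify the two defining conditions of $\PHset(A^{(H)},T)$ separately. Your route is more explicit---tracking exactly which edges are internal, attachment, or untouched at each step---while the paper's is terser but leaves more of that bookkeeping to the reader; both arrive at the same place by essentially the same mechanism (collapsing leaves pendent-path weights intact below the attachment edge, so the $P_{H+1}$ weights propagate outward).
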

\begin{proof}
  We claim that $A^{(h)}\in \PHset(A^{(H)},T^{(h)})$ for $0\le h\le H$. This is trivial for $h=H$, and follows inductively for $0\le h<H$. %
  
  Let $h\in [H]$. Since $A^{(h)}$ is obtained by collapsing \pendentpath{h}s in $A^{(h-1)}$, we have $w_{A^{(h-1)}}|_B=w_{A^{(h)}}|_{B'}$ for all \pendentpath{k}s $B$ in $T^{(h-1)}$ and $B'$ in $T^{(h)}$, whenever $1\le k\le h$. By induction, $w_{A^{(h-1)}}|_B=w_{A^{(H)}}|_Q$ where $Q$ is the \pendentpath{k} in $P_{H+1}$, and taking $k=h$ establishes the claim.
\end{proof}

The eigenvalues of $A \in \PHset(C,T)$ are determined by the eigenvalues of $C$ and a selection of principal submatrices of $C$, as follows.

\begin{theorem}\label{thm:arbitrary M}
  Let $T$ be a hedge of height~$H$, $C \in \cs(P_{H+1})$ and $A \in \PHset(C,T)$. For $i=1,\ldots,H+1$ define $C_i:=C[H-i+2,\ldots ,H+1]$. %
  Then
  \[\spec(A)=\bigcup_{i=1}^{H+1} \ell_i(T)\spec(C_i).\]
\end{theorem}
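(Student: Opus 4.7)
The plan is to reverse the path-to-hedge construction by iterating the collapsing operation of Section~\ref{sec:branching-collapsing}. I would build a chain of matrices $A=A^{(0)},A^{(1)},\ldots,A^{(H)}$ where each $A^{(h)}\in\cs(T^{(h)})$ is obtained from $A^{(h-1)}$ by collapsing \pendentpath{h}s, and then recover $\spec(A)$ from $\spec(C)$ by summing the contributions of each level via equation~\eqref{eq:collapsed-spectrum}.

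The first task is to verify that each collapse at level $h$ is actually performable and preserves the path-to-hedge structure. Fix a vertex $v\in V_h(T^{(h-1)})$ and let $B_0,\ldots,B_s$ be the \pendentpath{h}s of $T^{(h-1)}$ at $v$. Since $A^{(h-1)}$ comes from $C$ via the path-to-hedge construction, every vertex at height $i$ carries diagonal weight $w_C(v'_i)$, and every internal vertex of a \pendentpath{h} has a unique child, so the sum condition defining $\PHset$ forces the corresponding edge weight to equal $w_C(v'_{i+1},v'_i)$. Hence the restrictions of $w_{A^{(h-1)}}$ to $B_0,\ldots,B_s$ all agree under the natural isomorphisms, the branches are collapsible, and the collapsed matrix $A^{(h)}$ again lies in $\PHset(C,T^{(h)})$. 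Iterating, the procedure terminates at $A^{(H)}\in\cs(P_{H+1})$ with $w_{A^{(H)}}=w_C$, so $A^{(H)}$ is diagonally similar to $C$ and $\spec(A^{(H)})=\spec(C)=\spec(C_{H+1})$.

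Next I would extract the spectral contribution at each level. By Proposition~\ref{prop:collapsing}, every \pendentpath{h} in $T^{(h-1)}$ carries the same weights as the unique \pendentpath{h} of $P_{H+1}$, so the corresponding principal submatrix of $A^{(h-1)}$ is diagonally similar to $C_h$ and has spectrum $\spec(C_h)$. At a vertex $v\in V_h(T)$ with $d_v$ children in $T$, the collapse at $v$ discards $d_v-1$ copies of such a path; summing over $v\in V_h(T)$, the total number of \pendentpath{h}s removed is
\[
\sum_{v\in V_h(T)}(d_v-1)=|V_{h-1}(T)|-|V_h(T)|=\ell_h(T).
\]
Applying equation~\eqref{eq:collapsed-spectrum} to each collapse and combining the contributions yields $\spec(A^{(h-1)})=\spec(A^{(h)})\cup \ell_h(T)\spec(C_h)$.

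Telescoping from $h=1$ to $h=H$, together with $\ell_{H+1}(T)=1$ and $\spec(A^{(H)})=\spec(C_{H+1})$, recovers the claimed identity $\spec(A)=\bigcup_{i=1}^{H+1}\ell_i(T)\spec(C_i)$. The main obstacle is essentially bookkeeping: one must confirm that the path-to-hedge constraints survive each collapse (so the next level is again collapsible) and that the sibling-counting identity correctly aggregates the copies of $\spec(C_h)$ across all branching vertices of height $h$. Both points are routine once the definitions are unfolded, and the real content of the theorem lies in packaging the iterated branch-duplication spectral identity of Proposition~\ref{prop:branch-spec} into a closed-form expression indexed by the branching sequence $(\ell_i(T))_i$.
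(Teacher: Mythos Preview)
Your proof is correct and follows essentially the same approach as the paper: iteratively collapse \pendentpath{h}s in $A^{(h-1)}$ to obtain $A^{(h)}\in\PHset(C,T^{(h)})$, invoke Proposition~\ref{prop:collapsing} to identify each collapsed branch with $C_h$, apply equation~\eqref{eq:collapsed-spectrum} to pick up $\ell_h(T)\spec(C_h)$ at each step, and telescope. Your explicit verification that the path-to-hedge structure is preserved under collapsing and your sibling-counting computation of $\ell_h(T)$ add welcome detail, but the argument is the same as the paper's.
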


\begin{proof}%
Let $w^{(0)}=w_A$. Since $A^{(0)}\in \PHset(C,T^{(0)})$, we can collapse $\ell_1(T)$ \pendentpath{1}s in $w^{(0)}$ to obtain a weight $w^{(1)}\in \PHset(w_C, T^{(1)})$. Continuing inductively (collapsing $\ell_{h}(T)$ \pendentpath{h}s at the $h$th step), we obtain $w^{(h+1)}\in \PHset(w_C,T^{(h)})$ for $0\le h< H$. In particular, $w^{(H)}\in \PHset(w_C,T^{(H)})=\ns[w_C](P_{H+1})$, so $w^{(H)}=w_C$. Now choose a matrix $A^{(h)}\in \ns[w^{(h)}](T^{(h)})$ for each $h\in [H]$. Since $w^{(h)}$ is obtained from $w^{(h-1)}$ by collapsing $\ell_h(T)$ branches $B$, each with $w^{(h-1)}|_B=w_{C_h}$ by Proposition~\ref{prop:collapsing},
equation~\eqref{eq:branching-spectrum} implies that for $h\in [H]$ we have \[\sigma(A^{(h-1)})=\ell_h(T)\sigma(C_h)\cup \sigma(A^{(h)}).\]
Moreover, $\sigma(A^{(H)})=\sigma(C)=\ell_{H+1}(T)\sigma(C_{H+1})$. Hence, $\sigma(A)=\sigma(A^{(0)})=\bigcup_{i=1}^{H+1}
\ell_i(T)\sigma(C_i)$, as claimed.
\end{proof}

\begin{definition}
  Let $T$ be a hedge of height~$H$, $C \in \cs(P_{H+1})$ and $A \in \PHset(C,T)$. For $i=1,\ldots,H+1$ define $C_i:=C[H-i+2,\ldots ,H+1]$. We refer to $\sigma(C_i)$ as the set of \emph{level~$i$ eigenvalues of~$A$}, for $1\le i\le H+1$.
\end{definition}

\begin{example}\label{eg:a1-bftree}
Let $C_3=\left(\begin{smallmatrix}8&20&0\\1&4&3\\0&1&2\end{smallmatrix}\right)$ be the matrix of Example~\ref{ex:Ms}. Then $\PHset(C_3,\bftree)$ is the set of matrices of the form
\begin{equation}\label{eq:C-BF-example} 
A=
\left(
\begin{smallmatrix}
8 & 20 t_{1,2} & 0              & 20 t_{1,4} & 0              & 20 t_{1,6} & 0              & 0              & 0              & 0               \\
 1       & 4        & 3 t_{2,3} & 0              & 0              & 0              & 0              & 3 t_{2,8} & 0              & 0               \\
 0       & 1              & 2        & 0              & 0              & 0              & 0              & 0              & 0              & 0               \\
 1       & 0              & 0              & 4        & 3 t_{4,5} & 0              & 0              & 0              &  3 t_{4,9} & 0               \\
 0       & 0              & 0              & 1              & 2        & 0              & 0              & 0              & 0              & 0               \\
 1       & 0              & 0              & 0              & 0              & 4        & 3 t_{6,7} & 0              & 0              & 3 t_{6,10} \\
 0       & 0              & 0              & 0              & 0              & 1              & 2        & 0              & 0              & 0               \\
 0       & 1              & 0              & 0              & 0              & 0              & 0              & 2        & 0              & 0               \\
 0       & 0              & 0              & 1              & 0              & 0              & 0              & 0              & 2        & 0               \\
 0       & 0              & 0              & 0              & 0              & 1              & 0              & 0              & 0              & 2         \\
\end{smallmatrix}
\right),
\end{equation}
where $t_{v,b}>0$ and  
$$\sum_{b\in \children(v)} t_{v,b}=1$$
for $v\in \{1,2,4,6\}$.
 By Theorem~\ref{thm:arbitrary M}, all these matrices are co-spectral, and since $(\ell_3(\bftree),\ell_2(\bftree),\ell_1(\bftree))=(1,2,3)$, we have
 \begin{align*}
     \sigma(A)&=\sigma(C_3)\cup 2\sigma(C_2)\cup 3\sigma(C_1)\\
     &=\{0,3,11\}\cup 2\{1,5\}\cup 3\{2\}=\{0,1,1,2,2,2,3,5,5,11\}
 \end{align*}
 as presented in Table~\ref{tab:C-eigenvalues}.
  \begin{table}[htb!]
    \begin{center}
    \begin{tabular}{c|cccccc|l}
        matrix &\multicolumn{6}{c|}{eigenvalues} & $\ell_i$s\\
        \hline
        $C_3$ & $0$&&&$3$&&$11$&$\ell_3=1$\\
        $C_2$ & &$1$&&&$5$&&$\ell_2=2$\\
        $C_1$ & &&$2$&&&&$\ell_1=3$\\
        \hline 
        $A$ & $0$ & $1^{(2)}$& $2^{(3)}$ & $3$&  $5^{(2)}$ & $11$
    \end{tabular}
\end{center}
 \caption{The eigenvalues of $A \in \PHset(C_3,\bftree)$ and its level~$i$ eigenvalues (i.e., the eigenvalues of $C_i$) for $i=1,2,3$.}
     \label{tab:C-eigenvalues}
 \end{table}
\end{example}

Theorem~\ref{thm:arbitrary M} shows that, starting with any matrix $C \in \cs(P_{H+1})$, the path-to-hedge construction produces a matrix with several multiple eigenvalues. Generically (when $\sigma(C_i)\cap \sigma(C_j)=\emptyset$ for $i\ne j$ as in Example \ref{eg:a1-bftree}), we will only obtain the multiplicities $\ell_i(T)$, as implied by the theorem. However, it is possible to increase the multiplicities of selected eigenvalues even further, by carefully choosing matrices $C \in \cs(P_{H+1})$ so that appropriate submatrices $C_i$ and $C_j$ of $C$ have some eigenvalues in common; the multiplicities of such eigenvalues in a matrix $A\in \PHset(C,T)$ will then be at least $\ell_i+\ell_j$. We will present one systematic way to do this in the next section; for now we give a small example.

\begin{example}\label{ex:coincidences}
Consider the construction in Example \ref{eg:a1-bftree} that uses $$C'_3=\left(
\begin{array}{ccc}
 2 & 20 & 0 \\
 1 & 4 & 3 \\
 0 & 1 & 2 \\
\end{array}
\right)$$ 
with eigenvalues $\left\{3-2 \sqrt{6},2,3+2 \sqrt{6}\right\}$ instead of $C_3$ defined in Example \ref{ex:Ms}.  Then the resulting matrices $A'\in \PHset(C_3',\bftree)$ have $(1,1)$ element equal to $2$, and are otherwise identical to the matrices in \eqref{eq:C-BF-example}. Since $2$ is now a common eigenvalue of $C'_3$ and $C'_1$, it has multiplicity $4$ as an eigenvalue of~$A'$.  Indeed,
 \begin{align*}
     \sigma(A')&=\sigma(C'_3)\cup 2\sigma(C'_2)\cup 3\sigma(C'_1)\\
     &=\{3-2\sqrt 6,2,3+2\sqrt6\}\cup 2\{1,5\}\cup 3\{2\}\\&
     =\{3-2 \sqrt{6},1,1,2,2,2,2,5,5,3+2 \sqrt{6}\},
 \end{align*}
 as presented in Table~\ref{tab:bftree}. Note that $A'$ realizes the multiplicity list ${\ordm}_{BF}=(1,2,4,2,1)$ of  Example~\ref{ex:BF-12421}.
  \begin{table}[htb!]
    \begin{center}
    \begin{tabular}{c|ccccc|l}
        matrix &\multicolumn{5}{c}{eigenvalues} & $\ell_i$s\\
        \hline
        $C'_3$ & $3-2\sqrt{6}$&&$2$&&$3+2\sqrt{6}$&$\ell_3=1$\\
        $C'_2$ & &$1$&&$5$&&$\ell_2=2$\\
        $C'_1$ & &&$2$&&&$\ell_1=3$\\
        \hline 
        $A'$ & $3-2\sqrt{6}$ & $1^{(2)}$& $2^{(4)}$ &  $5^{(2)}$ & $3+2\sqrt{6}$
    \end{tabular}
\end{center}
 \caption{The eigenvalues of $A' \in \PHset(C'_3,\bftree)$ and its level~$i$ eigenvalues (i.e., the eigenvalues of $C'_i$) for $i=1,2,3$.}
     \label{tab:bftree}
 \end{table}
\end{example}

\section{A greedy construction of a path matrix}\label{sec:path-construction}

In the path-to-hedge construction, we now have a method to construct a matrix corresponding to a hedge whose eigenvalues coincide with the eigenvalues of the trailing principal submatrices of a given path matrix~$C$. We now aim to choose $C$ so that these trailing submatrices have common and specified eigenvalues, occuring periodically. 
This is done in Proposition~\ref{prop:interlacing},  starting from five distinct specified eigenvalues satisfying certain ordering conditions~\eqref{ineq:regions}. In Theorem~\ref{thm:section-4-highlights}, we apply the path-to-hedge construction to obtain a matrix for a given lush hedge that has these five specified eigenvalues with high multiplicities. 
It is interesting that these five specified eigenvalues need not be the only eigenvalues of the matrix with high multiplicities: in Section~\ref{sec:rigid} we will show that as the height of the lush hedge increases, the matrix constructed in this section may achieve other eigenvalues with high multiplicities. 

\subsection{Eigenvalue coincidences in subpaths}
Let $a_n\in \bR$, $b_n>0$ and 
$C_n$ be defined as in~\eqref{eq:recursiveM}. To increase the multiplicities of matrices arising from the path-to-hedge construction even further than those that arise automatically from Theorem~\ref{thm:arbitrary M}, we want to choose $a_n,b_n$ so that for certain $i,j$, the matrices $C_i$ and $C_j$ have some eigenvalues in common. As we will shortly see, $C_i$ and $C_{i+1}$ cannot have any eigenvalues in common, so, we first demand that $C_{i}$ and $C_{i+2}$ always have a common eigenvalue. In addition, we can have a common eigenvalue for $C_i$ and $C_{i+3}$.  

\begin{lemma}\label{lem:steps}
For $n\ge1$, let $C_n\in \cs(P_n)$ be defined as in~\eqref{eq:recursiveM}, where $a_i\in \bR$ for $i\ge1$ and $b_i>0$ for $i\ge2$. 
\begin{enumerate}
\item If $k\ge2$, then $\sigma(C_{k-1})\cap \sigma(C_k)=\emptyset$.
\item If $k\ge3$ and  $\alpha \in \spec(C_{k-2})$, then $\alpha \in \spec(C_k)$ if and only if $a_k=\alpha$. 
\item If $k\ge4$ and $\beta \in \spec(C_{k-3})$, then $\beta \in \spec(C_k)$ if and only if $b_k=(\beta-a_k)(\beta-a_{k-1})$. 
\end{enumerate}
\end{lemma}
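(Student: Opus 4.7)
The key tool is the standard three-term recurrence for the characteristic polynomials $p_k := p_{C_k}$. The plan is to first establish
\[
p_{k+1}(x) = (x - a_{k+1}) p_k(x) - b_{k+1}\, p_{k-1}(x), \qquad k \ge 1,
\]
with the convention $p_0(x) = 1$. Although $C_k$ is not symmetric, its super- and sub-diagonals multiply (entrywise) to give $b_k$ at the relevant positions, and a Laplace expansion of $\det(xI_{k+1} - C_{k+1})$ along the first row produces exactly this recurrence: the $(1,1)$-cofactor contributes $(x - a_{k+1})p_k(x)$, and the $(1,2)$-cofactor, after one further expansion along its first column (which has $-1$ in the top entry and zeros elsewhere), contributes $-b_{k+1}\,p_{k-1}(x)$. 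One should double-check the base case $k=1$ so that the convention $p_0 = 1$ is consistent.

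Once the recurrence is in hand, part (1) is a downward induction. If $\lambda \in \sigma(C_{k-1}) \cap \sigma(C_k)$, then the recurrence applied at index $k-1$ gives
\[
b_k\, p_{k-2}(\lambda) = (\lambda - a_k) p_{k-1}(\lambda) - p_k(\lambda) = 0,
\]
so $p_{k-2}(\lambda) = 0$ since $b_k > 0$. Iterating this step downward, $p_j(\lambda) = 0$ for all $j = 0,1,\dots,k$, contradicting $p_0(\lambda) = 1$.

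For part (2), assume $p_{k-2}(\alpha) = 0$ and substitute into the recurrence:
\[
p_k(\alpha) = (\alpha - a_k) p_{k-1}(\alpha) - b_k\, p_{k-2}(\alpha) = (\alpha - a_k) p_{k-1}(\alpha).
\]
By part (1), $p_{k-1}(\alpha) \ne 0$ (since $\alpha$ is a root of $p_{k-2}$ and consecutive polynomials share no roots). Hence $p_k(\alpha) = 0$ if and only if $\alpha = a_k$. For part (3), assume $p_{k-3}(\beta) = 0$ and apply the recurrence twice. First, $p_{k-1}(\beta) = (\beta - a_{k-1}) p_{k-2}(\beta)$; then
\[
p_k(\beta) = (\beta - a_k) p_{k-1}(\beta) - b_k\, p_{k-2}(\beta) = \bigl[(\beta - a_k)(\beta - a_{k-1}) - b_k\bigr] p_{k-2}(\beta).
\]
Part (1) again forces $p_{k-2}(\beta) \ne 0$, so $p_k(\beta) = 0$ if and only if $b_k = (\beta - a_k)(\beta - a_{k-1})$, as required.

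No serious obstacle is anticipated; the only care needed is at the very first step. Because $C_k$ is combinatorially symmetric but not symmetric, one must confirm that the product of the $(i,i{+}1)$ and $(i{+}1,i)$ entries (namely $b_{i+1} \cdot 1 = b_{i+1}$) is what actually enters the determinant expansion, and this is exactly what the block-form recursion~\eqref{eq:recursiveM} delivers. All three parts then fall out of the single recurrence together with the non-vanishing statement~(1).
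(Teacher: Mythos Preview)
Your proof is correct and essentially matches the paper's argument for parts~(2) and~(3): both proceed via the three-term recurrence $p_k(x)=(x-a_k)p_{k-1}(x)-b_kp_{k-2}(x)$, substitute the assumed root, and invoke part~(1) to conclude that the remaining factor is nonzero.

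The one genuine difference is in part~(1). The paper observes that $C_k$ is diagonally similar to a symmetric matrix $A\in\S(P_k)$, so $C_{k-1}$ is cospectral with the principal submatrix $A(1)$, and then cites the strict Cauchy interlacing theorem to conclude $\sigma(C_{k-1})\cap\sigma(C_k)=\emptyset$. Your argument instead runs the recurrence backwards: a common root of $p_{k-1}$ and $p_k$ forces $p_{k-2}(\lambda)=0$ (since $b_k>0$), and iterating yields $p_0(\lambda)=0$, contradicting $p_0\equiv1$. Your route is more self-contained---it avoids invoking interlacing or the passage to a symmetric representative---and is the classical Sturm-sequence argument. The paper's route is shorter to state but leans on an external citation; either is perfectly adequate here.
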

\begin{proof}
Recall that there is a symmetric matrix $A\in \S(P_k)$ which is cospectral with $C_k$; then $A(1)$ is cospectral with $C_{k-1}$. The eigenvalues of~$A$ and $A(1)$ strictly interlace (see for example~\cite[Theorem~4.3.17]{horn_johnson_2012}), hence the first claim holds.

Let $p_k(x)$ denote the characteristic polynomial of $C_k$. Then, for $k\ge3$,
$$p_k(x)=(x-a_{k})p_{k-1}(x)-b_k p_{k-2}(x).$$
In particular, assuming $p_{k-2}(\alpha)=0$, we get $p_{k-1}(\alpha)\neq 0$, by our initial observation, and $p_k(\alpha)=(\alpha-a_k)p_{k-1}(\alpha)$. The second part of the lemma follows. 

To prove the final part, suppose $k\ge4$ and $p_{k-3}(\beta)=0$. From the recursive relation above we get:
\begin{equation*}\label{eq:recursive_two}
p_k(x)=((x-a_k)(x-a_{k-1})-b_k)p_{k-2}(x)-(x-a_k)b_{k-1}p_{k-3}(x),
\end{equation*}
so $p_k(\beta)=((\beta-a_k)(\beta-a_{k-1})-b_k)p_{k-2}(\beta)$. 
Since $p_{k-2}(\beta)\neq 0$, we get $p_k(\beta)=0$ if and only if  $b_k=(\beta-a_k)(\beta-a_{k-1})$, as required.
\end{proof}

We now give some conditions whose relevance will become apparent in Proposition~\ref{prop:interlacing} below.
\begin{definition}
Let $\mathcal B$ denote the set of all $\Lambda=(\alpha_1,\alpha_2,\beta_2,\beta_3,\beta_4)\in \bR^5$ with five distinct elements, that satisfy one of following twelve conditions:
\begin{equation}\label{ineq:regions}\left\{\quad
\begin{aligned}
 \beta _2<\alpha _1<\alpha _2<\beta _3<\beta _4 &  \\
 \beta _2<\beta _4<\alpha _1<\alpha _2<\beta _3 &  \\
 \beta _4<\beta _2<\alpha _1<\alpha _2<\beta _3 & \text{ and }\alpha _2+\beta _2>\beta _4+\beta _3  \\
 \beta _3<\beta _2<\alpha _1<\alpha _2<\beta _4 & \text{ and }\alpha _2+\beta _2<\beta _4+\beta _3  \\
 \beta _3<\beta _2<\beta _4<\alpha _1<\alpha _2 &  \\
 \beta _4<\beta _3<\beta _2<\alpha _1<\alpha _2 & \\[6pt]
 \beta _4<\beta _3<\alpha _2<\alpha _1<\beta _2 &   \\
 \beta _3<\alpha _2<\alpha _1<\beta _4<\beta _2 &   \\
 \beta _3<\alpha _2<\alpha _1<\beta _2<\beta _4 & \text{ and }\alpha _2+\beta _2<\beta _4+\beta _3  \\
 \beta _4<\alpha _2<\alpha _1<\beta _2<\beta _3 & \text{ and }\alpha _2+\beta _2>\beta _4+\beta _3  \\
 \alpha _2<\alpha _1<\beta _4<\beta _2<\beta _3 &  \\
\alpha _2<\alpha _1<\beta _2<\beta _3<\beta _4 &.
\end{aligned}\right.
\end{equation}
We also define
\begin{align*} \mathcal{B}_3&=\{(\alpha_1,\alpha_2,\beta_2,\beta_3)\in \bR^4 : (\alpha_1,\alpha_2,\beta_2,\beta_3,\beta_4)\in \mathcal{B}\text{ for some $\beta_4\in \bR$}\},\\
\mathcal{B}_2&=\{(\alpha_1,\alpha_2,\beta_2)\in \bR^3 : (\alpha_1,\alpha_2,\beta_2,\beta_3,\beta_4)\in \mathcal{B}\text{ for some $\beta_3,\beta_4\in \bR$}\}
\end{align*}
and set $\mathcal{B}_1=\bR$. (Our convention here is that $\mathcal{B}_i$ is the set $\mathcal{B}$ with any variables with a subscript $j>i$ deleted.)
\end{definition}

Note that the latter six conditions in~\eqref{ineq:regions} are the reverse of the former six, so that
up to negation the number of disjoint valid regions in~$\mathcal{B}$ is six. (For $\mathcal{B}_3$ and $\mathcal{B}_2$, these collapse to two regions and one valid region up to negation, respectively.)
Note also that under the linear substitution $\gamma = \alpha_2 + \beta_2 - \beta_4$,
the third condition is equivalent to a single string of inequalities
\[
\beta_2 < \alpha_1 < \alpha_2 < \beta_3 < \gamma
\]
and the fourth condition is similarly equivalent to
\[
\gamma < \beta_3 < \beta_2 < \alpha_1 < \alpha_2,
\]
from which we can recover, in either case, $\beta_4 = \alpha_2 + \beta_2 - \gamma$.
It follows that the $12$ disjoint conditions can each, separately, be
parameterized by choosing five real numbers that are strictly ordered but otherwise completely free.
\bigskip

The following ``periodic'' notation will turn out to be convenient. 
\begin{definition}\label{def:alpha-beta}
Given real numbers $\alpha_1,\alpha_2,\beta_2,\beta_3,\beta_4$, we extend these to sequences $(\alpha_i)$, $(\beta_i)$ as follows:%
\[ \alpha_i:=\alpha_k\quad\text{whenever $i\not\in \{1,2\}$, $k\in \{1,2\}$ and $i\in 2\bZ+k$}\]
and
\[ \beta_i:=\beta_k\quad\text{whenever $i\not\in \{2,3,4\}$, $k\in \{2,3,4\}$ and $i\in 3\bZ+k$.}\]
\end{definition}

\begin{proposition} \label{prop:interlacing}
Let $\Lambda :=  (\alpha_1,\alpha_2,\beta_2,\beta_3,\beta_4)$ be a list of five distinct real numbers satisfying %
$\alpha_2+\beta_2\ne \beta_3+\beta_4$. Let  $n\geq 1$ be an integer, $a_i$ and $b_i>0$ be real numbers %
and consider the corresponding matrices $C_i \in \R(P_i)$, $i=1,\ldots,n$, given by \eqref{eq:recursiveM}.
  The following are equivalent:
   \begin{enumerate}[(i)]
   \item \label{cond:i}
   $\spec(C_1)=\{\alpha_1\}$ and $\spec(C_i)\cap \Lambda=\{\alpha_i,\beta_i\}$ for $i=2,\ldots,n$. 
   \item\label{cond:ii} The entries $a_i,b_i$ of the matrices $C_1,\dots,C_n$ are given by 
   \begin{equation}\label{eq:a_is}
  a_i=
  \begin{cases}
    \alpha_1                   & i\in 2\bZ+1, \\
    -\alpha_1+\alpha_2+\beta_2 & i=2,         \\
    \alpha_2                   & 2<i\in 2\bZ,
  \end{cases}
  \end{equation}
  for $1\le i \le n$, and 
  \begin{equation}\label{eq:b_is}
  b_i=
  \begin{cases}
    ( \beta_2-\alpha_1)(\alpha_1-\alpha_2) & i=2,                                     \\
    (\beta_3-\alpha_2)(\beta_3-\beta_2)   & i=3,                                     \\
    \frac{(\beta_4-\alpha_1)(\beta_3-\beta_4)(\alpha_2+\beta_2-\beta_3-\beta_4)}{\beta_4-\beta_2}
                                          & i=4,                                     \\
    (\beta_j-\alpha_1)(\beta_j-\alpha_2)  & j\in \{2,3,4\}\text{ and }4<i\in 3\bZ+j, \\
  \end{cases}
\end{equation} for $2\le i\le n$.
\end{enumerate}
Moreover, when these conditions are satisfied, the following assertions hold.  If $n\ge 4$, then $\Lambda\in \mathcal{B}$. If  $n=3$, then $(\alpha_1,\alpha_2,\beta_2,\beta_3)\in \mathcal{B}_3$. If $n=2$, then $(\alpha_1,\alpha_2,\beta_2)\in \mathcal{B}_2$. If $n=1$, then $\alpha_1\in \mathcal{B}_1$.
\end{proposition}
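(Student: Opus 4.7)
My plan is to prove the equivalence by induction on $i$, using Lemma~\ref{lem:steps} throughout to translate each eigenvalue coincidence between nested submatrices $C_j, C_i$ ($j < i$) into a polynomial identity on the entries $a_i, b_i$. For (i) $\Rightarrow$ (ii), the base cases come for free: $a_1 = \alpha_1$ from $C_1 = (a_1)$; for $i = 2$, the hypothesis forces the entire $2\times 2$ spectrum $\sigma(C_2) = \{\alpha_2, \beta_2\}$, so trace and determinant yield $a_2$ and $b_2$; for $i = 3$, $\alpha_3 = \alpha_1 \in \sigma(C_1)\cap \sigma(C_3)$, so Lemma~\ref{lem:steps}(2) gives $a_3 = \alpha_1$, and evaluating the three-term recurrence $p_3(x) = (x-a_3)p_2(x) - b_3p_1(x)$ at $x = \beta_3$ determines $b_3$. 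The case $i = 4$ is analogous, with the factorization $(\beta_4-\alpha_2)(\beta_4-\beta_2) - (\beta_3-\alpha_2)(\beta_3-\beta_2) = (\beta_4-\beta_3)(\beta_4+\beta_3-\alpha_2-\beta_2)$ producing the listed formula for $b_4$. For $i \ge 5$, the periodic identities $\alpha_i = \alpha_{i-2}$ and $\beta_i = \beta_{i-3}$ combined with Lemma~\ref{lem:steps}(2) and~(3) pin down $a_i$ and $b_i$ to the values in (ii).

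For (ii) $\Rightarrow$ (i), the same applications of Lemma~\ref{lem:steps}, read in the reverse direction, show that $\alpha_i, \beta_i \in \sigma(C_i)$; the delicate point is to exclude the remaining three elements of $\Lambda$ from $\sigma(C_i)$. For $i \ge 5$, any $\lambda \in \Lambda \setminus \{\alpha_i,\beta_i\}$ coincides, by periodicity, with $\alpha_{i-1}$, $\beta_{i-1}$, or $\beta_{i-2}$; the first two lie in $\sigma(C_{i-1})$ and are excluded from $\sigma(C_i)$ by Lemma~\ref{lem:steps}(1), while for the third, Lemma~\ref{lem:steps}(2) would force $a_i = \lambda$, which fails since $a_i \in \{\alpha_1, \alpha_2\}$ is disjoint from the $\beta$'s. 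The hypothesis $\alpha_2+\beta_2 \ne \beta_3+\beta_4$ enters exactly at $i = 3$: by the trace, the third eigenvalue of $C_3$ is $\alpha_2+\beta_2-\beta_3$, and this hypothesis, together with the distinctness of $\Lambda$, ensures it avoids $\{\alpha_2, \beta_2, \beta_4\}$. The cases $i = 2$ and $i = 4$ are routine applications of Lemma~\ref{lem:steps}(1) and~(2).

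Finally, the assertion that $\Lambda \in \mathcal B$ (or its appropriate projection) follows from positivity of the $b_i$ in \eqref{eq:b_is}. The condition $b_2 > 0$ yields the two $\mathcal B_2$ alternatives $\beta_2 < \alpha_1 < \alpha_2$ or its reverse; adjoining $b_3 > 0$ (equivalently, $\beta_3$ lies outside the closed interval between $\alpha_2$ and $\beta_2$) produces the $\mathcal B_3$ subcases; and for $n \ge 4$, the bookkeeping heart of the argument is to verify that the sign of
\[
b_4 = \frac{(\beta_4 - \alpha_1)(\beta_3 - \beta_4)(\alpha_2+\beta_2-\beta_3-\beta_4)}{\beta_4 - \beta_2},
\]
refined by the preceding constraints, partitions the orderings into precisely the twelve regions of $\mathcal B$, with the affine factor $\alpha_2+\beta_2-\beta_3-\beta_4$ supplying the auxiliary inequality in the two ``mixed'' regions. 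For $i \ge 5$, $b_i = (\beta_j-\alpha_1)(\beta_j-\alpha_2)$ for some $j \in \{2,3,4\}$, and one checks that in each of the twelve regions every $\beta_j$ lies either below both of $\alpha_1, \alpha_2$ or above both, so positivity is automatic and no new constraint on $\Lambda$ is imposed. The main obstacle in the whole proof is this twelve-way case analysis deriving $\mathcal B$ from $b_4 > 0$.
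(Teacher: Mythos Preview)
Your proposal is correct and follows essentially the same route as the paper: induction on $i$ via Lemma~\ref{lem:steps}, with the trace/determinant handling $i=2$, the three-term recurrence handling $i=3,4$, and periodicity plus Lemma~\ref{lem:steps}(2),(3) handling $i\ge 5$. One small difference worth noting: for the exclusion of the third ``forbidden'' element $\beta_{i-2}$ from $\sigma(C_i)$ when $i\ge 4$, you invoke Lemma~\ref{lem:steps}(2) directly (since $a_i\in\{\alpha_1,\alpha_2\}$ cannot equal a $\beta$), whereas the paper instead argues via $\beta_{i-2}=\beta_{i+1}\in\sigma(C_{i+1})$ and strict interlacing; your version is slightly more self-contained since it does not require extending the sequence to $C_{i+1}$. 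Your explicit verification that $b_i>0$ for $i\ge 5$ is automatic in each of the twelve regions (because every $\beta_j$ lies on the same side of both $\alpha_1$ and $\alpha_2$) is exactly what the paper dismisses as ``routine to check.''
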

\begin{proof} 
 For $n\le 2$, this is simple to verify directly. 

In the case $n=3$, suppose $\{\alpha_1, \beta_3\} \subseteq \spec(C_3)$. Since 
$\sigma(C_1)=\{\alpha_1\}$,
Lemma~\ref{lem:steps} allows us to conclude that $a_3 = \alpha_1$. Now, the characteristic polynomial $p_3(x)$ of $C_3$ is 
$p_3(x)=(x-\alpha_1)(x-\alpha_2)(x-\beta_2)-b_3(x-\alpha_1)$. Since $p_3(\beta_3)=0$, this implies that $b_3=(\beta_3-\alpha_2)(\beta_3-\beta_2)$, as claimed.  Conversely, given $a_i,b_i$ as above, we can check directly that $\spec(C_3)=\{\alpha_1,\beta_3,\alpha_2+\beta_2-\beta_3\}$. The eigenvalue $\alpha_2+\beta_2-\beta_3$ is not equal to $\beta_4$ by hypothesis, and it is not equal to $\beta_2\in \sigma(C_2)$, by Lemma~\ref{lem:steps}. So $\spec(C_3)\cap \Lambda=\{\alpha_3,\beta_3\}$, as required. (We note here for future use that in the case $n=3$, if, contrary to our hypothesis, we do have $\alpha_2+\beta_2= \beta_3+\beta_4$, then condition (2) is equivalent to the conditions $\sigma(C_1)=\{\alpha_1\}$, $\sigma(C_2)=\{\alpha_2,\beta_2\}$ and $\sigma(C_3)=\{\alpha_1,\beta_3,\beta_4\}$.)  Moreover, since %
$b_2,b_3>0$ and (by the $n=2$ case) we have $b_2=(\beta_2-\alpha_1)(\alpha_1-\alpha_2)$, it is straightforward to see that  $(\alpha_1,\alpha_2,\beta_2,\beta_3) \in \mathcal{B}_3$.

For $n=4$, supposing that $\{\alpha_2,\beta_4\}\subseteq \sigma(C_4)$, a similar argument as was used for $n=3$ gives $a_4=\alpha_2$ and $b_4=\frac{p_3(\beta_4)}{\beta_4-\beta_2}$, as claimed in condition~\ref{cond:ii}. The requirement $b_i>0$ for $i=2,3,4$ then implies that $\Lambda=(\alpha_1,\alpha_2,\beta_2,\beta_3, \beta_4) \in \mathcal{B}$ whenever  $n\ge4$ and condition~\ref{cond:ii} holds.

For $n\ge 5$, suppose we have the given spectral conditions~\ref{cond:i} on $C_i$ for $1\le i\le n$. In particular, $\alpha_n=\alpha_{n-2}\in \sigma(C_{n-2})$ and $\beta_n=\beta_{n-3}\in \sigma(C_{n-3})$. 
The given formulae for $a_n$ and $b_n$ then follow directly from Lemma~\ref{lem:steps}.

Conversely, suppose $n\ge 4$ and $a_i,b_i$ are given by \eqref{eq:a_is} and~\eqref{eq:b_is} for $i\ne n+1$. Considering the matrices $C_{n-3}$ and $C_{n-2}$, Lemma~\ref{lem:steps} shows that $\{\alpha_n,\beta_n\}\in \sigma(C_n)$. Moreover,
by Lemma~\ref{lem:steps} again, $\spec(C_{n})$ does not intersect $\spec(C_{n-1})\cup \spec(C_{n+1})\supseteq \{\alpha_{n-1},\beta_{n-1},\beta_{n+1}\}$, so the intersection of $\sigma(C_n)$ with $\Lambda$ is precisely $\{\alpha_n,\beta_n\}$.
  It is routine to check that if $n\ge4$, then $b_i>0$ for $2\le i\le n$ implies that exactly one of the $12$ disjoint conditions in~\eqref{ineq:regions} is satisfied.
\end{proof}

\begin{definition}\label{def:B}
Given $\Lambda \in \mathcal B$, we consistently write %
$\Lambda=(\alpha_1,\alpha_2,\beta_2,\beta_3,\beta_4)$ and
    \begin{equation*}\label{eq:C-n-Lambda}
        C_n^{\Lambda}=\begin{pmatrix}
      a_n&b_n\\
      1&a_{n-1}&b_{n-1}\\
      &1&a_{n-2}&b_{n-2}\\
      &&\ddots&\ddots&\ddots\\
      &&&1&a_2&b_2\\
      &&&&1&a_1&\\
    \end{pmatrix}   
    \end{equation*}
    for the unique $n\times n$ matrix satisfying Proposition~\ref{prop:interlacing}. %
\end{definition}

Explicitly, we have
\bgroup\allowdisplaybreaks
\begin{align*}
  C_1^{\Lambda}&=\left(
\begin{array}{c}
 \alpha_1 \\
\end{array}
\right),\\
C_2^{\Lambda}&=\left(
\begin{array}{cc}
 -\alpha_1+\alpha_2+\beta_2 & (\alpha_1-\alpha_2) (\beta_2-\alpha_1) \\
 1                          & \alpha_1                               \\
\end{array}
\right),\\C_3^{\Lambda}&=\left(
\begin{array}{ccc}
 \alpha_1 & (\alpha_2-\beta_3) (\beta_2-\beta_3) & 0                                      \\
 1        & -\alpha_1+\alpha_2+\beta_2           & (\alpha_1-\alpha_2) (\beta_2-\alpha_1) \\
 0        & 1                                    & \alpha_1                               \\
\end{array}
\right),\\
C_4^{\Lambda}&=\left(
\begin{array}{cccc}
 \alpha_2 & b_4 & 0                                    & 0                                      \\
 1        & \alpha_1                                                                                        & (\alpha_2-\beta_3) (\beta_2-\beta_3) & 0                                      \\
 0        & 1                                                                                               & -\alpha_1+\alpha_2+\beta_2           & (\alpha_1-\alpha_2) (\beta_2-\alpha_1) \\
 0        & 0                                                                                               & 1                                    & \alpha_1                               \\
\end{array}
\right)
\intertext{with $b_4 = \tfrac{(\alpha_1-\beta_4) (\beta_4-\beta_3) (\alpha_2+\beta_2-\beta_3-\beta_4)}{\beta_4-\beta_2}$  as in \eqref{eq:b_is}, and, for $n > 4$,}
C_n^{\Lambda}&=\left(
\begin{array}{c|c}
\alpha_n & \begin{array}{cc}(\beta_n - \alpha_1)(\beta_n - \alpha_2) & {\mathbf 0}^\top \end{array} \\\hline
    \begin{array}{c}
        1 \\
        \mathbf{0}
    \end{array} & C_{n - 1}^{\Lambda}
\end{array}
\right).
\end{align*}
\egroup

Note that for $n>4$, these matrices have periodic diagonal and periodic superdiagonal, if we remove the last $3$ rows and columns. In fact, the diagonal of $C_n^{\Lambda}$ with its final two entries omitted is of the form $(\alpha_n,\alpha_{n-1},\dots)$, and hence has period $2$, and the superdiagonal of $C_n^{\Lambda}$ with its final $3$ entries omitted is of the form $((\beta_n-\alpha_1)(\beta_n-\alpha_2),(\beta_{n-1}-\alpha_1)(\beta_{n-1}-\alpha_2),\dots)$, and hence has period $3$.

\subsection{Consequences for the path-to-hedge construction}
\begin{corollary}\label{cor:critical_mult}
 If $T$ is a hedge of height $H$, $\Lambda=(\alpha_1,\alpha_2,\beta_2,\beta_3,\beta_4) \in \mathcal B$, and $A \in \PHset(C_{H+1}^{\Lambda},T)$, then
 \begin{align*}
     \mult(\alpha_j,A)&=\sum\limits_{\substack{i\in 2\bZ+j,\\ i\ge j}}
    \ell_i(T),\; j=1,2\quad\text{and}\\
   \mult(\beta_j,A)&=\sum\limits_{\substack{i\in 3\bZ+j,\\i\ge j}}
    \ell_i(T),\; j=2,3,4.
 \end{align*}
\end{corollary}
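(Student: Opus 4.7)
The plan is to combine Theorem~\ref{thm:arbitrary M} with Proposition~\ref{prop:interlacing}, using the simplicity of tridiagonal spectra, to read off the multiplicities directly.

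First, I would apply Theorem~\ref{thm:arbitrary M} to write
\[ \spec(A) = \bigcup_{i=1}^{H+1} \ell_i(T)\,\spec(C_i), \]
where $C_i := C_{H+1}^{\Lambda}[H-i+2,\ldots,H+1]$. A short induction using the block-recursive form of $C_n^{\Lambda}$ given in Definition~\ref{def:B} shows that $C_i = C_i^{\Lambda}$. Each $C_i^{\Lambda}\in\cs(P_i)$ is diagonally similar to an irreducible symmetric tridiagonal matrix and therefore has a simple spectrum, so for every $\lambda\in\bR$,
\[ \mult(\lambda,A) = \sum_{i=1}^{H+1} \ell_i(T)\,\mult(\lambda, C_i^{\Lambda}), \]
with each $\mult(\lambda, C_i^{\Lambda})\in\{0,1\}$.

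Next I would decide, for each distinguished value $\lambda\in\{\alpha_1,\alpha_2,\beta_2,\beta_3,\beta_4\}$, exactly which indices $i$ contribute. By Proposition~\ref{prop:interlacing}, $\spec(C_1^{\Lambda})=\{\alpha_1\}$, and for $i\ge 2$ we have $\spec(C_i^{\Lambda})\cap\Lambda=\{\alpha_i,\beta_i\}$ under the periodic indexing of Definition~\ref{def:alpha-beta}. Since the five values in $\Lambda$ are pairwise distinct, this gives: for $j\in\{1,2\}$, the condition $\alpha_j\in\spec(C_i^{\Lambda})$ is equivalent to $\alpha_i=\alpha_j$, i.e.\ $i\equiv j\pmod 2$; and for $j\in\{2,3,4\}$, the condition $\beta_j\in\spec(C_i^{\Lambda})$ is equivalent to $i\ge 2$ and $\beta_i=\beta_j$, i.e.\ $i\equiv j\pmod 3$. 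A direct check shows that the combined constraints on $i$ are exactly $i\in 2\bZ+j$ with $i\ge j$ (respectively $i\in 3\bZ+j$ with $i\ge j$); the explicit lower bound $i\ge j$ is active only for $j=4$, where it excludes $i=1$ from the $\beta_4$-sum.

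Substituting these characterizations into the displayed multiplicity formula yields the two claimed sums. The argument is essentially bookkeeping, and I expect the only mildly delicate point to be a careful handling of the boundary cases $i=1,2$, where Proposition~\ref{prop:interlacing} describes the spectrum of $C_i^{\Lambda}$ slightly differently from the generic case $i\ge 3$.
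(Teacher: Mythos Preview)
Your proposal is correct and follows the same approach as the paper's proof, which simply invokes Theorem~\ref{thm:arbitrary M} and Proposition~\ref{prop:interlacing} and says ``the claim follows.'' You have merely been more explicit than the paper about the simplicity of the spectrum of each $C_i^{\Lambda}$ and about tracking the boundary cases $i=1$ and $j=4$.
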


\begin{proof}
  Theorem \ref{thm:arbitrary M}
  tells us that 
  \[\spec(A)=\bigcup_{i=1}^{H+1} \ell_i(T)\spec(C_i^{\Lambda}).\]
 From Proposition \ref{prop:interlacing} we get 
  $\spec(C_1^{\Lambda})=\{\alpha_1\}$ and $\spec(C_i^{\Lambda})\cap \Lambda=\{\alpha_i,\beta_i\}$ for $i=2\ldots,n$. The claim follows. 
\end{proof}
  
\begin{definition}
  \label{def:critical-multiplicity2}
  If $T$ is a lush hedge of height $H \ge 2$, then we say that an unordered multiplicity list $\ordm$ is a \emph{\highMultiplicityList{}} for $T$
  if there exist five multiplicities $m_1, m_2, n_2, n_3, n_4$ in $\ordm$ so that:
  \begin{equation}\left\{\quad
  \begin{aligned}\label{ineqs:high-mults2}
    m_j &\ge \sum\limits_{\substack{i\in 2\bZ+j,\\ i\ge j}}
    \ell_i(T),\; j=1,2; \\
     n_j&\ge \sum\limits_{\substack{i\in 3\bZ+j,\\i\ge j}}
    \ell_i(T),\; j=2,3,4; \quad\text{and} \\
    n_4 &< \ell_3(T).
  \end{aligned}\right.
  \end{equation}
\end{definition}
Note that in the case $H=2$, for which $\ell_3 = 1$, this requires $\ordm$
to include a multiplicity $n_4 = 0$.
This is possible if we adopt the convention that a spectrum, and correspondingly
an ordered or unordered multiplicity list, may be written down so as to include an ``eigenvalue of multiplicity zero''
that does not actually occur in the spectrum.
The inequalities in \eqref{ineqs:high-mults2} are derived from the multiplicities of entries of~$\Lambda$ as eigenvalues of $A \in \PHset(C_{H+1}^{\Lambda},T)$. Since $\spec(C_3^{\Lambda})$ contains an eigenvalue $\gamma \not\in \Lambda$, $\gamma$ has multiplicity  at least $\ell_3$ in $A$. Hence, for trees of small height the multiplicities of entries in $\Lambda$ need not be the highest five multiplicities of~$A$, and that is why the inequality $n_4< \ell_3(T)$ is needed above. %

Elements of $\Lambda \in \mathcal B$ contain distinguished eigenvalues of matrices $C_i^{\Lambda}$, and hence also of matrices in $\PHset(C_{i}^{\Lambda},T)$. %

We summarise the outcomes of this subsection in the theorem below. 

\begin{theorem}\label{thm:section-4-highlights}
Let $T$ be a lush hedge of height $H\ge2$ and $\Lambda \in \mathcal B$. Every matrix $A \in \PHset(C_{H+1}^{\Lambda},T)$ has a critical multiplicity list.
\end{theorem}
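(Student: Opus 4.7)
The plan is to produce the five required multiplicities explicitly as the multiplicities of the distinguished eigenvalues $\alpha_1,\alpha_2,\beta_2,\beta_3,\beta_4$ of $A$, and then verify the inequalities of~\eqref{ineqs:high-mults2} using the two key tools that have been built up: Corollary~\ref{cor:critical_mult}, which computes these multiplicities precisely, and Lemma~\ref{lem:decreasing-ells}, which exploits lushness to bound the $\ell_i(T)$.

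More concretely, I would set $m_j := \mult(\alpha_j, A)$ for $j=1,2$ and $n_j := \mult(\beta_j, A)$ for $j=2,3,4$. Since $\Lambda \in \mathcal B$ has five distinct entries, these are the multiplicities of five distinct real numbers in $\spec(A)$ (where some of them may be zero, and we then invoke the zero-multiplicity convention discussed immediately after Definition~\ref{def:critical-multiplicity2}). Corollary~\ref{cor:critical_mult} applies directly to $A \in \PHset(C_{H+1}^{\Lambda},T)$ and gives
\[
m_j = \sum_{\substack{i \in 2\bZ + j \\ i \ge j}} \ell_i(T)\quad(j=1,2), \qquad n_j = \sum_{\substack{i \in 3\bZ + j \\ i \ge j}} \ell_i(T)\quad(j=2,3,4),
\]
so the first two families of inequalities in~\eqref{ineqs:high-mults2} hold, in fact with equality.

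The only condition that needs a separate argument is $n_4 < \ell_3(T)$. Here I would estimate
\[
n_4 = \sum_{\substack{i \in 3\bZ + 4 \\ i \ge 4}} \ell_i(T) \le \sum_{j=4}^{H+1} \ell_j(T),
\]
and then apply Lemma~\ref{lem:decreasing-ells} with $i = 3$ (permissible since $3 \ge 2$) to get $\ell_3(T) \ge 2\sum_{j=4}^{H+1} \ell_j(T) \ge 2 n_4$. When $n_4 \ge 1$ this gives $\ell_3(T) \ge 2 n_4 > n_4$; when $n_4 = 0$ (which happens precisely in the boundary case $H = 2$, since then $\ell_i = 0$ for $i > 3$), we simply note that $\ell_3(T) \ge \ell_{H+1}(T) = 1 > 0$, so again $\ell_3(T) > n_4$.

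No step here looks genuinely hard; the work has already been done upstream. The only mild pitfall is the bookkeeping around the case $H = 2$, where $\beta_4$ is not an actual eigenvalue of $A$ and one must be careful to allow $n_4 = 0$ in the unordered multiplicity list (as the paper explicitly provides for). Everything else is a direct translation of Corollary~\ref{cor:critical_mult} and the lushness estimate from Lemma~\ref{lem:decreasing-ells} into the definition of a \highMultiplicityList{}.
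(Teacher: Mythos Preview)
Your proof is correct and follows essentially the same route as the paper's: define $m_j=\mult(\alpha_j,A)$ and $n_j=\mult(\beta_j,A)$, invoke Corollary~\ref{cor:critical_mult} for the first two families of inequalities, and then combine $n_4\le\sum_{j\ge4}\ell_j(T)$ with Lemma~\ref{lem:decreasing-ells} (at $i=3$) to obtain $n_4<\ell_3(T)$. Your explicit case split on $n_4=0$ versus $n_4\ge1$ is in fact slightly more careful than the paper's one-line chain, which tacitly assumes $\sum_{j=4}^{H+1}\ell_j(T)>0$ and so does not literally go through when $H=2$.
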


\begin{proof}
    For any $\Lambda \in \mathcal B$ and $A \in \PHset(C_{H+1}^{\Lambda},T)$ let us denote $m_i=\mult(\alpha_i,A)$ and  $n_j=\mult(\beta_j,A)$ for $i=1,2$ and $j=2,3,4$. By Corollary~\ref{cor:critical_mult} we obtain the first two inequalities in \eqref{ineqs:high-mults2}. Moreover, by Lemma~\ref{lem:decreasing-ells} we have  
    \[n_4=\sum\limits_{\substack{i\in 3\bZ+1,\\i\ge 4}}\ell_i(T)<2\sum\limits_{j = 4}^{H + 1} \ell_j(T)\leq\ell_3(T).\qedhere\]
\end{proof}

\begin{remark}\label{rk:rn}
Let $\Lambda\in \mathcal{B}$. For $n> 2$, we have seen that $\alpha_n$ and $\beta_n$ are both roots of the characteristic polynomial $p_n(x)$ of $C_n^\Lambda$. The remaining eigenvalues of $C_n^\Lambda$ are therefore the roots of the degree $n-2$ polynomial $r_n(x):=\frac{p_n(x)}{(x-\alpha_n)(x-\beta_n)}$. Since $r_n(x)$ is determined entirely by our choice of~$\Lambda$, by Theorem~\ref{thm:arbitrary M}, for any $H\ge1$, the entire spectrum of any matrix $A\in \PHset(C_{H+1}^\Lambda, T)$ is determined solely by $T$ and $\Lambda$.
\end{remark}

Corollary~\ref{cor:critical_mult} guarantees the existence of a matrix in $A \in \mathcal R(T)$ with high multiplicities for five eigenvalues. The path-to-hedge construction used to obtain $A$ implies further spectral and structural properties for $A$, as noted in the previous remark. In the following sections we will show that this construction is the only way to obtain the five critical multiplicities.  
  
\section{The invertible subtrees lemma and combinatorics}
\label{sec:comb}
When given a tree that is a hedge, the previous sections describe a strategy for constructing a matrix with high eigenvalue multiplicities.  In this section, we approach the problem from the other direction, namely, we use the structure of the tree and a given unordered multiplicity list with some high multiplicities to infer necessary structural properties of any matrix achieving the list.  The major results in this section are Lemma \ref{lem:nullity-bound} and its corollaries, which show that matrices that achieve the maximum multiplicity for a tree cannot have certain types of invertible principal submatrices. The proscribed induced subtrees are based on minimal path covers of the original tree, so the goal of the rest of the section is to prove Lemmas \ref{lem:M} and \ref{lem:Sigma}, which determine the size and composition of minimal path covers for lush hedges. %

These results follow from two standard tools in the literature, namely zero forcing and the connection between path covers and maximum multiplicity in trees. We start by reviewing some relevant results.

 Zero forcing was introduced in 2008 in \cite{MR2388646} as a way of providing a combinatorial upper bound on the maximum multiplicity of a graph and has been used productively in many contexts; see, for example \cite{z-paper1, loop-zero-forcing}. For the convenience of the reader, we give a brief account here of the zero forcing concepts we will use. Given a graph $G=(V,E)$, we color each vertex either blue or white and write $B\subseteq V$ for the set of blue vertices.  The color change rule says that a blue vertex $v$ with exactly one white neighbor $u$ can \emph{color} or \emph{force} $u$ to change from white to blue and is written $v\to u$.  
A finite sequence of successive forces is called a \emph{zero forcing process}. %
The \emph{derived set} of $B$ is the set of vertices $B'\subseteq V$ which are blue after all possible applications of the color change rule are performed, and it does not depend on the particular zero forcing process chosen.  When $B'=V$, we say $B$ is a \emph{zero forcing set} of $G$. The minimum size of a zero forcing set is called the \emph{zero forcing number} of $G$ and is written $Z(G)$.

The following two results appear in \cite{MR2388646}, which uses them to prove that $M(G)\leq Z(G)$.  The main structural result of this section, Lemma \ref{lem:nullity-bound}, uses these same results, and so they are stated here for later reference.  %

\begin{lemma}[\cite{MR2388646}, Proposition 2.3]\label{lem:ZFvec}
Let $B$ be a zero forcing set of $G=(V,E)$ and $A\in\S(G)$.  If ${\bf v}\in \Ker(A)$ and ${\bf v}[B]= {\bf 0}$, then ${\bf v}={\bf 0}$.
\end{lemma}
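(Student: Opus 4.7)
The plan is to run the zero forcing process on $G$ starting from $B$ and propagate the vanishing of the entries of $\mathbf{v}$ along each force, using the row of the matrix equation $A\mathbf{v}=\mathbf{0}$ indexed by the forcing vertex. Throughout, write $v_w$ for the entry of $\mathbf{v}$ indexed by the vertex $w\in V$. The inductive invariant I would maintain is: at every stage of the forcing process, every currently blue vertex $w$ satisfies $v_w = 0$. The base case is immediate from the hypothesis $\mathbf{v}[B] = \mathbf{0}$.

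For the inductive step, suppose we are about to apply a force $v\to u$, meaning $v$ is currently blue and $u$ is the unique white neighbor of $v$ in $G$. I would extract the $v$th row of $A\mathbf{v}=\mathbf{0}$, which reads
\[
a_{v,v} v_v + \sum_{w \sim v} a_{v,w} v_w = 0.
\]
By the inductive hypothesis $v_v=0$, and $v_w=0$ for every blue neighbor $w$ of $v$. Since $u$ is the only white neighbor, the sum collapses to $a_{v,u} v_u = 0$, and because $\{v,u\} \in E(G)$ we have $a_{v,u}\neq 0$, forcing $v_u=0$. Hence the invariant is preserved when $u$ is recolored blue.

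Since $B$ is a zero forcing set, there is a forcing sequence whose derived set is all of $V$, so after finitely many iterations every vertex is blue, giving $v_w=0$ for all $w\in V$, i.e., $\mathbf{v}=\mathbf{0}$. There is no real obstacle in this argument; the only point requiring care is that each force is licensed to have a genuinely \emph{unique} white neighbor at the moment it is applied, so that the row equation really does collapse to a single nonzero coefficient times $v_u$. This is precisely the content of the color change rule, so the proof plan goes through essentially verbatim.
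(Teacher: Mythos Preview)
Your proof is correct and is exactly the standard argument. Note that the paper itself does not supply a proof of this lemma; it is stated with a citation to \cite{MR2388646}, and your argument is essentially the original one given there.
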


\begin{lemma}[\cite{MR2388646}, Proposition 2.2]\label{lem:Zbound}
Let $G=(V,E)$, $|V|=n$, $A$ be an $n\times n$ matrix, and $N\subseteq [n]$ be a set of indices.  If the only vector ${\bf v}\in \Ker(A)$ with ${\bf v}[N] = {\bf 0}$ is the zero vector, then $\nullity(A)\leq |N|$.
\end{lemma}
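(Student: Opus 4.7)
The plan is to prove this by a direct linear-algebra argument: exhibit an injective linear map from $\Ker(A)$ into $\bR^{|N|}$, whose existence forces $\dim \Ker(A) \le |N|$.

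Concretely, I would define the restriction map
\[
\varphi \colon \Ker(A) \to \bR^{|N|}, \qquad \varphi({\bf v}) := {\bf v}[N],
\]
which is clearly linear. The hypothesis of the lemma is precisely the statement that $\Ker \varphi = \{{\bf 0}\}$, i.e.\ $\varphi$ is injective. Hence $\dim \Ker(A) \le \dim \bR^{|N|} = |N|$, which is the desired inequality $\nullity(A) \le |N|$.

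There is essentially no obstacle here: the statement is a one-line consequence of the rank-nullity theorem applied to the coordinate projection onto the indices in $N$. The only thing worth noting is that $A$ is not required to be symmetric or to come from $\S(G)$ or $\cs(G)$ — the result is a purely linear-algebraic fact about any $n\times n$ matrix, and the graph $G$ plays no role in the statement itself (it only matters for how the lemma is later applied together with Lemma~\ref{lem:ZFvec}). So the proof I would write is just a couple of sentences identifying $\varphi$ and invoking injectivity.
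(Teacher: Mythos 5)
Your proof is correct: the restriction map $\varphi\colon\Ker(A)\to\bR^{|N|}$ is linear, the hypothesis says exactly that it is injective, and the dimension bound follows. The paper itself states this lemma as a cited result from the literature without reproving it, and your argument is the standard one given in the original source, so there is nothing further to add.
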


Next we turn to the connection between zero forcing and path covers.  First, notice that each vertex in~$G$ can force at most one other vertex in a zero forcing process. Given a zero forcing process starting with a zero forcing set $B$ and $v_1\in B$, suppose $v_1\to v_2$, $v_2\to v_3$, \ldots, $v_{k-1}\to v_k$ where the process ends without $v_k$ forcing.  Then the set $\{v_1,v_2,\ldots,v_k\}$ is the vertex set of an induced path in~$G$ and is called a \emph{zero forcing chain}.  Notice that when a zero forcing process forces all of $V$, the set of zero forcing chains forms a path cover.  This shows that in general $P(G)\leq Z(G)$, but when $T$ is a tree, the authors of \cite{MR2388646} showed that every path cover is the set of zero forcing chains for some zero forcing process, and so $P(T)=Z(T)$.  Thus, the path cover number, the maximum nullity, and the zero forcing number coincide for trees.

\subsection{The invertible subtrees lemma} %
With the preliminaries done, we turn to results that will allow us to prove that in some cases part of the structure of the realizing matrix is implied from the prescribed multiplicities. As this paper is only concerned with trees, all the relevant results are stated for trees only.  We note that those results can be generalized to graphs which are not necessarily trees at the cost of making the statements more complicated. We point out where generalizations are possible.

\begin{definition}
Let $T$ be a forest, with subtrees $T_1$ and $T_2$. We say $T_1$ and $T_2$ are \emph{independent in~$T$} if $u \in V(T_1)$ and $v \in V(T_2)$ implies $\{u,v\} \not\in E(T)$. 
\end{definition}

The following result is central to the proof of the main theorem of this paper. We call it the Invertible Subtrees Lemma, and we believe it may also be useful in other contexts.

\begin{lemma}[Invertible Subtrees Lemma]\label{lem:nullity-bound}
  Let $T$ be a forest, $A\in\cs(T)$ and $c\ge1$.
  If $T_1,\dots,T_c$ are mutually independent subtrees of~$T$ and $A[T_i]$ is invertible for each $i\in [c]$, then the nullity of~$A$ is at most the path cover number of $T\setminus (T_1\cup \dots\cup T_c)$.
\end{lemma}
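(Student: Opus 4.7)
My plan is to apply Lemma~\ref{lem:Zbound} with $N$ chosen as a minimum zero-forcing set of the forest $T':=T\setminus(T_1\cup\cdots\cup T_c)$, so that $|N|=Z(T')=P(T')$ by the equality of these parameters for forests. It then suffices to show that any ${\bf v}\in\Ker(A)$ with ${\bf v}[N]={\bf 0}$ vanishes.

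To verify this, I would run a two-rule coloring game on $V(T)$ with blue set initialized to $N$. \textbf{Rule~(A)}: a blue vertex with exactly one white $T$-neighbour forces it blue (the standard zero-forcing rule on $T$, justified row-wise via Lemma~\ref{lem:ZFvec}). \textbf{Rule~(B)}: if every external neighbour of $T_i$ (necessarily in $T'$ by mutual independence) is blue, then colour all of $V(T_i)$ blue; this is justified by reading the rows of $A{\bf v}={\bf 0}$ indexed by $V(T_i)$ as $A[T_i]\,{\bf v}[V(T_i)]=-(\text{cross terms from external neighbours in }T')$, whose right-hand side vanishes when every external neighbour is blue, so invertibility of $A[T_i]$ forces ${\bf v}[V(T_i)]={\bf 0}$. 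If the game's stable blue set $B^*$ exhausts $V(T)$, then ${\bf v}={\bf 0}$ and Lemma~\ref{lem:Zbound} delivers the desired bound.

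I would prove $B^*=V(T)$ by contradiction: let $W:=V(T)\setminus B^*$ and assume $W\ne\emptyset$. Rule~(B) forces $W\cap V(T')\ne\emptyset$ (otherwise Rule~(B) applies to any still-white $T_i$). Fix a zero-forcing process of $T'$ from $N$, defining for each $w\in V(T')\setminus N$ a unique predecessor $v_w$. Define $\phi\colon W\cap V(T')\to W\cap V(T')$ as follows. If $v_w\in W$, set $\phi(w):=v_w$ (``Case~1''). If $v_w\in B^*$, then Rule~(A)'s failure to blue $w$ implies $v_w$ has a second white $T$-neighbour $x\ne w$: if $x\in V(T')$, set $\phi(w):=x$ (``Case~2a''); if $x\in V(T_i)$ for some $i$, then Rule~(B)'s failure on $T_i$ produces a white external neighbour $w'\in W\cap V(T')$, and we set $\phi(w):=w'$ (``Case~2b''). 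Using the key structural fact that distinct external neighbours of any subtree $T_i$ lie in distinct components of $T\setminus T_i$, one checks $\phi(w)\ne w$ in all cases. By finiteness, iterating $\phi$ produces a cycle $w_0\to w_1\to\cdots\to w_{k-1}\to w_k=w_0$ of length $k\ge 2$ with pairwise distinct vertices.

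The main obstacle is extracting the contradiction from this cycle. Each transition $l$ contributes a walk in $T$ using at most two $T'$-edges: always the predecessor edge $\{w_l,v_{w_l}\}$, and additionally the edge $\{v_{w_l},w_{l+1}\}$ in Case~2a. Concatenating the transitions yields a closed walk in the forest $T$. The crucial combinatorial claim is that every $T'$-edge appearing in this walk is distinct: case analysis shows that any coincidence either (i) forces two cycle vertices to be mutual predecessors in the zero-forcing process (impossible, as each vertex has at most one predecessor and forces at most one successor, so mutual forcing would yield $t'(u)=t'(u)+2$) or (ii) requires an element of $B^*$ to equal an element of $W$ (also impossible). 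Thus the closed walk traverses each of its $T'$-edges exactly once---an odd multiplicity---contradicting the elementary fact that every closed walk in a forest must traverse each edge an even number of times. This contradiction forces $B^*=V(T)$ and completes the proof.
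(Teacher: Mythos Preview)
Your argument is correct, and it reaches the same conclusion by a genuinely different route from the paper's proof.

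The paper's proof contracts each $T_i$ and each component $H_j$ of $T'$ to a single vertex, obtaining a bipartite forest $G$. Labelling the edges of $G$ by whether the corresponding $T'$-vertex lies in the derived set $B'$, it argues that if no $T_i$ has all external neighbours in $B'$, then one can build an infinite alternating path in $G$, contradicting finiteness. Thus some $T_{i_0}$ has all external neighbours in $B'$, whence invertibility gives ${\bf v}[T_{i_0}]={\bf 0}$; the argument then repeats inductively on $T\setminus T_{i_0}$ to peel off the remaining $T_i$'s one at a time.

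Your approach instead absorbs the invertibility step directly into the forcing game via Rule~(B), then shows in one shot that the augmented game colours everything. The map $\phi$ on white $T'$-vertices is a nice device: your edge-distinctness case analysis does go through (the key observations being that (i) in any coincidence where both transitions are Case~1, the two cycle vertices would have to force each other in the $T'$-process, which is impossible, and (ii) in any coincidence involving a Case~2 transition, a vertex would have to lie in both $B^*$ and $W$, or a vertex would have two distinct $T'$-predecessors), so the closed walk in the forest $T$ traverses at least one $T'$-edge with multiplicity exactly one, giving the contradiction.

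The trade-offs: the paper's contraction-to-a-bipartite-forest argument is conceptually clean and suggests a generalization (as the paper remarks, only the acyclicity of the contracted graph is really needed). Your argument avoids the induction on $c$ and works directly in $T$, at the cost of a somewhat more delicate case analysis for the distinctness of the $T'$-edges in the closed walk.
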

\begin{proof}
Let $H_1,\dots,H_k$ be the connected components of $H=T\setminus(T_1\cup\dots\cup T_c)$ and let $N(T_i)$ denote the set of neighbors of $T_i$ in~$T$, namely the set of all vertices in $V(T)\setminus V(T_i)$ which are adjacent to some vertex of $T_i$ in~$T$.  As the $T_i$ are mutually independent, $N(T_i)\subseteq V(H)$ for all $i\in [c]$.

Let $B$ be a zero forcing set for $H$ and let $B'$ be the derived set under a zero forcing process performed in~$T$. 
We claim that there is some $i_0\in [c]$ so that $N(T_{i_0})\subseteq B'$.

To see this, let $G$ be the graph obtained by contracting each $T_i$ and each $H_j$ to a single vertex. That is, $G$ is the graph with the $c+k$ vertices $\{T_1,\dots,T_c,H_1,\dots,H_k\}$, and $G$ has edges \[\{\{X,Y\}: \text{$T$ contains a (necessarily unique) edge between $X$ and $Y$}\}.\]
Then $G$ is a forest, and since $T_1,\dots,T_c$ are independent in~$T$, the partition of the vertices of $G$ into $\{T_1,\dots,T_c\}$ and $\{H_1,\dots,H_k\}$ is a bipartition of $G$.%

 Label each edge $\{T_i,H_j\}$ of $G$ with the integer $|B'\cap N(T_i)\cap V(H_j)|\in \{0,1\}$. Hence, the edge $\{T_i,H_j\}$ in~$G$ is labeled $1$ precisely when the vertex in $V(H_j)$ incident to $T_i$ lies in $B'$. Our claim is equivalent to the statement that there is some $i_0\in [c]$ so that every edge of $G$ incident to $T_{i_0}$ is labelled~$1$.

If the claim is false, then for every $i\in [c]$, there is some edge of $G$ incident to $T_i$ which is labelled~$0$.  On the other hand, the only forces $u\to v$ which are valid in $H$ but not $T$ are those where $u$ has a neighbor in one of the $T_i$. Since $B$ is a zero forcing set for $H$, it follows that every non-isolated $H_j$ in~$G$ has at least one incident edge labelled~$1$. 
Hence, we can find a path in~$G$ of the form $T_{i_1}$---$H_{j_1}$---$T_{i_2}$---$H_{j_2}$---\dots{} in which the edge labels alternate $0,1,0,1,\dots$. Because of this alternating labelling, we have $i_{t+1}\ne i_t$ and $j_{t+1}\ne j_t$ for each $t$. Since $G$ is acyclic, this implies that $i_s\ne i_t$ and $j_s\ne j_t$ for $s\ne t$, so we have an infinite path in the forest $G$, which is a contradiction. This establishes the claim.

Suppose without loss of generality that $i_0=c$; then $N(T_{c})\subseteq B'$. If ${\bf v}$ is a null vector for $A$ with ${\bf v}[B]={\bf 0}$, then the proof of Lemma~\ref{lem:ZFvec} implies that ${\bf v}[B']={\bf 0}$. So, working with the block decomposition of matrices and vectors given by the partition $V(T)=V(T_c)\cup (B'\setminus V(T_c))\cup (V(T)\setminus (B'\cup V(T_c)))$, we have
\[ {\bf 0}=A{\bf v}=\begin{pmatrix}
  A[T_c] & {*} & 0\\ {*} & {*} & {*} \\ 0&{*}&{*}
\end{pmatrix}\begin{pmatrix}
  {\bf v}[T_c]\\{\bf 0}\\{*}
\end{pmatrix}=\begin{pmatrix}
  A[T_c]{\bf v}[T_c]\\{*}\\{*}
\end{pmatrix}\]
(where $*$ indicates an entry which is not relevant to our argument), so ${\bf v}[T_c]={\bf 0}$ by the invertibility of $A[T_c]$.
If $c>1$, then working in the forest $T'=T\setminus T_c$, we can repeat the above argument to see that (for the same set $B$) there is $i_0'\in [c-1]$, say $i_0'=c-1$, so that ${\bf v}[B\cup V(T_{c-1}\cup T_c)]={\bf 0}$. Continuing inductively, we obtain ${\bf v}[B\cup V(T_1\cup \dots\cup T_c)]={\bf 0}$. Since $B$ is a zero forcing set for $H=T\setminus(T_1\cup\dots\cup T_c)$, Lemma~\ref{lem:ZFvec} gives that ${\bf v}={\bf 0}$ and so $\nullity(A)\leq |B|$ by Lemma \ref{lem:Zbound}. Since path covers and zero forcing sets coincide for forests \cite{MR2388646}, $|B|\leq P(T\setminus (T_1\cup \dots\cup T_c))$ and the result follows.
\end{proof}

Implications for matrices achieving the maximum eigenvalue multiplicity are stated below.

\begin{corollary}\label{cor:invertibleSubtree}
Let $T$ be a tree with $A\in\cs(T)$ and let $T_1,\ldots,T_c$ be mutually independent subtrees of~$T$.
\begin{enumerate}
    \item If $\nullity(A)>P\left(T\setminus (T_1\cup \dots\cup T_c)\right)$, then at least one of the principal submatrices $A[T_1],\ldots,A[T_c]$ is not invertible.
    \item If $\lambda\in\spec(A)$ with $\mult(\lambda)>P\left(T\setminus (T_1\cup \dots\cup T_c)\right)$, then at least one of the principal submatrices $A[T_1],\ldots,A[T_c]$ has $\lambda$ as an eigenvalue.
\end{enumerate}
\end{corollary}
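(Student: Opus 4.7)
The plan is to derive both parts of Corollary~\ref{cor:invertibleSubtree} directly from the Invertible Subtrees Lemma (Lemma~\ref{lem:nullity-bound}), using contrapositive reasoning for part (1) and a standard diagonal shift trick for part (2). Neither part should require any new combinatorial input; the work has essentially been done already in establishing Lemma~\ref{lem:nullity-bound}, and these corollaries are just convenient specializations.

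For part (1), I would argue by contraposition. Suppose every principal submatrix $A[T_i]$, $i\in[c]$, is invertible. Since $T_1,\dots,T_c$ are mutually independent subtrees of~$T$ and $A\in\cs(T)$, the hypotheses of Lemma~\ref{lem:nullity-bound} apply, yielding
\[
\nullity(A)\le P\bigl(T\setminus(T_1\cup\dots\cup T_c)\bigr).
\]
This directly contradicts the assumption $\nullity(A)>P(T\setminus(T_1\cup\dots\cup T_c))$, so some $A[T_i]$ must fail to be invertible.

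For part (2), the idea is to apply part (1) to the shifted matrix $A-\lambda I$. Shifting by a scalar multiple of the identity leaves every off-diagonal entry unchanged, so $A-\lambda I\in\cs(T)$ as well, and the subtrees $T_1,\dots,T_c$ are still mutually independent. Moreover, $\nullity(A-\lambda I)=\mult(\lambda,A)>P(T\setminus(T_1\cup\dots\cup T_c))$ by assumption. Part (1) applied to $A-\lambda I$ yields some $i\in[c]$ for which $(A-\lambda I)[T_i]=A[T_i]-\lambda I$ is singular, which is exactly the statement that $\lambda\in\spec(A[T_i])$.

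The only substantive thing to verify is that the $\cs(T)$-structure is preserved under the diagonal shift, but this is immediate from the definition of $\cs(T)$, which places no restriction on diagonal entries. I do not expect any obstacle here; the real content is in Lemma~\ref{lem:nullity-bound}, and the corollary is a clean packaging of two equivalent formulations (nullity form and eigenvalue form) that will be applied repeatedly in later sections.
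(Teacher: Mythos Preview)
Your proposal is correct and matches the paper's own proof essentially verbatim: part~(1) is the contrapositive of Lemma~\ref{lem:nullity-bound}, and part~(2) follows from part~(1) applied to $A-\lambda I$. There is nothing to add.
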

\begin{proof}
The first part of the corollary follows directly from Lemma \ref{lem:nullity-bound}. %
The second part follows from the first, by considering $A':=A-\lambda I$.
\end{proof}

The invertible subtrees lemma is most useful when the path cover number of the graph decreases when the $T_i$ are removed.
\begin{observation}\label{obs:QinG}
Let $G$ be a graph and let $Q$ be a path in a path cover of size $P(G)$. Then $P(G\setminus Q)=P(G)-1$.
\end{observation}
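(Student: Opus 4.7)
The plan is to establish the equality by proving both inequalities, each using the fact that vertex-disjoint induced paths remain induced paths after deleting other vertices.

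For the upper bound $P(G\setminus Q)\le P(G)-1$, I would start from a minimum path cover $\mathcal{P}$ of $G$ of size $P(G)$ which contains $Q$ (such a cover exists by hypothesis). Removing $Q$ from $\mathcal{P}$ yields a collection $\mathcal{P}\setminus\{Q\}$ of $P(G)-1$ vertex-disjoint paths whose vertex union is $V(G)\setminus V(Q)=V(G\setminus Q)$. Each remaining element of $\mathcal{P}$ is an induced path in $G$, and since deleting vertices cannot create new edges, each one is also an induced path in $G\setminus Q$. Hence $\mathcal{P}\setminus\{Q\}$ is a valid path cover of $G\setminus Q$, giving $P(G\setminus Q)\le P(G)-1$.

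For the lower bound $P(G\setminus Q)\ge P(G)-1$, I would go in the opposite direction: starting from any minimum path cover $\mathcal{P}'$ of $G\setminus Q$, the collection $\mathcal{P}'\cup\{Q\}$ consists of vertex-disjoint induced paths (the paths in $\mathcal{P}'$ remain induced in $G$, since $Q$ is vertex-disjoint from all of them) whose union is $V(G)$. Thus it is a path cover of $G$, so $P(G)\le |\mathcal{P}'|+1=P(G\setminus Q)+1$, which rearranges to the desired inequality.

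Combining the two bounds yields $P(G\setminus Q)=P(G)-1$. There is no substantive obstacle here; the only small subtlety to verify is that a path which is induced in $G$ remains induced after deletion of a vertex-disjoint set, which is immediate from the definition of induced subgraph.
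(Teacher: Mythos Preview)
Your proof is correct and follows essentially the same approach as the paper's: both establish the upper bound by removing $Q$ from the given minimum path cover, and the lower bound by adjoining $Q$ to a minimum path cover of $G\setminus Q$ (the paper phrases this second step as a contradiction, but the content is identical).
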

\begin{proof}
  To see this, first note that $P(G\setminus Q)\leq P(G)-1$ because the induced path cover without $Q$ on $G\setminus Q$ has $P(G)-1$ paths.  Then, if $P(G\setminus Q)=k<P(G)-1$, a path cover of $G$ with $k+1<P(G)$ paths can be constructed by taking the union with $Q$.
\end{proof}

The special case where $c=1$ and $T_1$ is a single vertex in a minimal path cover will be used repeatedly and so it is stated as a corollary.

\begin{corollary}\label{cor:diagonalEntry}
Let $T$ be a tree, and suppose that $A=(a_{i,j})\in\cs(T)$ with $\lambda \in \spec(A)$ and $\mult(\lambda,A)=M(T)$.  
Then every vertex $v\in V(T)$ that is a singleton in some minimal path cover of~$T$ has 
 $a_{v,v}=\lambda$.
\end{corollary}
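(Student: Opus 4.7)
The plan is to invoke Corollary~\ref{cor:invertibleSubtree}(2) in the simplest possible way, taking $c=1$ and letting the single independent subtree $T_1$ be the trivial one-vertex subtree on $\{v\}$. To make this application work, I need to verify that the strict multiplicity inequality $\mult(\lambda,A)>P(T\setminus T_1)$ holds, and then simply read off what the corollary concludes for the $1\times 1$ matrix $A[T_1]=(a_{v,v})$.

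For the inequality, I would chain two facts. First, because $T$ is a tree, the equality $M(T)=P(T)$ holds (the Johnson--Leal Duarte/\cite{MR1712856} result cited in Section~2.2), so the hypothesis $\mult(\lambda,A)=M(T)$ gives $\mult(\lambda,A)=P(T)$. Second, $\{v\}$ is by assumption one of the paths in some minimum path cover of~$T$, so Observation~\ref{obs:QinG}, applied with $Q$ being this singleton path, yields $P(T\setminus\{v\})=P(T)-1$. Combining the two gives $\mult(\lambda,A)=P(T)>P(T)-1=P(T\setminus T_1)$, exactly as required.

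Corollary~\ref{cor:invertibleSubtree}(2) then guarantees that $\lambda\in\spec(A[T_1])$. Since $A[T_1]$ is the $1\times 1$ matrix $(a_{v,v})$, its unique eigenvalue is $a_{v,v}$, forcing $a_{v,v}=\lambda$. There is no real obstacle here: the content is entirely in the Invertible Subtrees Lemma (Lemma~\ref{lem:nullity-bound}) and in the tree-specific identity $M(T)=P(T)$; the corollary is essentially a bookkeeping consequence of specialising $c=1$ and $|T_1|=1$. The reason to state it separately is that this pointwise diagonal-pinning statement will be the workhorse used repeatedly in later sections, where one wants to identify explicit diagonal entries of a matrix realising a maximum multiplicity from purely combinatorial data about minimum path covers of the tree.
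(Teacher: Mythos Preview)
Your proposal is correct and essentially identical to the paper's proof: both set $c=1$, $T_1=\{v\}$, use Observation~\ref{obs:QinG} together with $M(T)=P(T)$ to get $\mult(\lambda,A)>P(T\setminus\{v\})$, and then apply Corollary~\ref{cor:invertibleSubtree}(2) to the $1\times1$ matrix $(a_{v,v})$.
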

\begin{proof}
As a minimal path cover of~$T$ has a singleton path $\{v\}$, we have $M(T)=P(T)=P(T\setminus \{v\})+1$ by Observation \ref{obs:QinG}.  The result follows from setting $T_1=\{v\}$ in the second part of Corollary \ref{cor:invertibleSubtree}.
\end{proof}

\begin{remark}
We note here that versions of the previous results still hold on classes of graphs which are more general than trees, but the formal statements are considerably more complicated and are not needed for our main results, so we omit them.  In particular, the proof of Lemma \ref{lem:nullity-bound} only requires that the contracted graph $G$ is a tree and does not have multiple edges.  Similarly, the hypotheses of Corollary \ref{cor:diagonalEntry} can be modified to use loop zero forcing (see~\cite{loop-zero-forcing}) at the vertex $v$ as long as $M(G)=Z(G)$ and $v$ is not part of any cycle in the original graph. 
\end{remark}

\subsection{Path cover formulae} %
 Let $T$ be a lush hedge of height~$H$, and recall the chain of subtrees \[T=T^{(0)}\supseteq T^{(1)}\supseteq\dots\supseteq T^{(H)}=P_{H+1}\] from Definition~\ref{def:chain-subtrees}.  In order to determine the path cover numbers of these subtrees and identify paths in minimal path covers of $T^{(h)}$ which can be used in Corollary~\ref{cor:invertibleSubtree} and Corollary~\ref{cor:diagonalEntry}, we will use a result from Nylen in \cite{Nylen}.
 
 \begin{definition}
   Let $T=(V,E)$ be a tree.  A vertex $v\in V$ is called \emph{appropriate} if its
deletion from G has at least 2 components that are paths joined at the end to the deleted vertex. In other words, $v$ has
degree at least $3$ and has at least two adjacent pendent paths.
 \end{definition}
 
 \begin{lemma}[\cite{Nylen}]\label{lem:AppVertex}
   If $v$ is an appropriate vertex in a tree $T$ with pendent paths $p_1\text{---}\ldots\text{---}p_k$ and $q_1\text{---}\ldots\text{---}q_l$ where $v$ is adjacent to $p_1$ and $q_1$, then there is a minimal path cover of~$T$ which contains the path \[p_k\text{---}\ldots\text{---}p_1\text{---}v\text{---}q_1\text{---}\ldots\text{---}q_l.\]
 \end{lemma}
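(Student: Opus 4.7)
The plan is to start with an arbitrary minimum path cover $\mathcal{P}$ of~$T$ and show that by a sequence of local modifications that do not increase $|\mathcal{P}|$ we can arrange for $\mathcal{P}$ to contain the path $R^*:=p_k\text{---}\cdots\text{---}p_1\text{---}v\text{---}q_1\text{---}\cdots\text{---}q_l$. Note first that $R^*$ is an induced path in~$T$: each pendent path is chordless, the only edges leaving each pendent path are incident to $v$, and within each pendent path only $p_1$ (respectively $q_1$) is adjacent to $v$. So $R^*$ is an admissible path to include in a path cover.

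First I would argue that we may assume each pendent path is covered by a single element of $\mathcal{P}$. The only vertex outside the $p$-pendent path adjacent to it is $v$, which touches it at $p_1$; so at most one element of $\mathcal{P}$ meeting the $p$-pendent path can extend outside it. Any further piece of the pendent path appearing in $\mathcal{P}$ is a subsegment of $p_1\text{---}\cdots\text{---}p_k$, and two adjacent such subsegments could be merged into a single induced path, strictly shrinking $\mathcal{P}$ and violating minimality. The same reasoning applies to the $q$-pendent path. Thus $\mathcal{P}$ contains a path $P$ with $\{p_1,\ldots,p_k\}\subseteq V(P)$ and a path $Q$ with $\{q_1,\ldots,q_l\}\subseteq V(Q)$; I will also write $R\in\mathcal{P}$ for the path containing $v$, which may coincide with $P$ or $Q$.

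Next I would carry out a three-way case analysis. If $P=Q=R$ then $R=R^*$ and we are done. If exactly one of $P,Q$ contains $v$, say $P=p_k\text{---}\cdots\text{---}p_1\text{---}v\text{---}x_1\text{---}\cdots\text{---}x_s$ with $x_1\ne q_1$ and $Q=q_l\text{---}\cdots\text{---}q_1$, then $s\ge1$ (else swapping $\{P,Q\}$ for $\{R^*\}$ would strictly shrink $\mathcal{P}$), and substituting $\{P,Q\}\mapsto\{R^*,\,x_1\text{---}\cdots\text{---}x_s\}$ preserves $|\mathcal{P}|$. If neither $P$ nor $Q$ contains $v$, then $R$ is distinct from both and has the form $R=x_1\text{---}\cdots\text{---}x_s\text{---}v\text{---}y_1\text{---}\cdots\text{---}y_t$; by the same minimality argument applied to $\{P,Q,R\}$ one gets $s,t\ge1$, and substituting $\{P,Q,R\}\mapsto\{R^*,\,x_1\text{---}\cdots\text{---}x_s,\,y_1\text{---}\cdots\text{---}y_t\}$ preserves $|\mathcal{P}|$ and yields $R^*\in\mathcal{P}$.

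The main obstacle is really just careful bookkeeping: at each replacement step one must verify that every new path is genuinely induced in~$T$ (each is either $R^*$ or a sub-path of an existing member of $\mathcal{P}$, so this is automatic) and that the total count never strictly decreases except in configurations immediately ruled out by the minimality of $\mathcal{P}$. The hypothesis that $v$ is appropriate enters only to guarantee the existence of the two pendent paths at $v$; the rest of the argument relies on the fact that in a tree each pendent path has exactly one edge connecting it to the remainder, so extensions of covering paths through $v$ can always be rerouted without affecting anything else.
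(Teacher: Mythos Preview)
The paper does not supply its own proof of this lemma; it simply cites Nylen. Your direct argument is correct: the key points---that by minimality each pendent path is covered by a single member of $\mathcal{P}$, and that the three-way case split on which of $P,Q$ contain $v$ can always be resolved by a count-preserving swap---are all sound. One small point worth making explicit in the third case: the minimality argument for $s,t\ge1$ is that if, say, $s=0$ then $P$ and $R$ could be concatenated at $v$ (the result is still induced because the only edge leaving the $p$-pendent path goes to $v$), contradicting minimality of $\mathcal{P}$. With that spelled out, the proof is complete and self-contained, whereas the paper relies on the external reference.
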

 
 We can now establish a formula for the path cover number of $T^{(h)}$. Note that the same formula appeared in Corollary~\ref{cor:critical_mult}; this will be important in the next section.

\begin{lemma}\label{lem:M} If $T$ is a lush hedge of height $H\ge 1$, and $0\le h\le H$, then
  \[M(T^{(h)})=P(T^{(h)})=\sum_{\substack{i\in 2\bZ+h+1,\\i\ge h+1}}\ell_i(T).\]
  Moreover, if $Q$ is either
  \begin{enumerate}
      \item a \pendentpath{(h+1)} in $T^{(h)}$, or
      \item %
      a singleton vertex $v \in V(T^{(h)})$ with $\h(v) \geq h$ and $\h(v)$ of the same parity as $h$,
  \end{enumerate}
  then there is a minimal path cover for $T^{(h)}$
  which contains $Q$ as one of the paths,
  and so \[P(T^{(h)}\setminus Q)=P(T^{(h)})-1.\]
\end{lemma}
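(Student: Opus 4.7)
I would proceed by induction on the height $H$ of $T$, freely using the identity $M(T') = P(T')$ for every tree $T'$ \cite{MR1712856} to interchange maximum multiplicity and path cover number. The base case $H = 1$ (where $T$ is a star $K_{1,n}$ with $n \ge 2$, giving $P(T^{(0)}) = n - 1 = \ell_1(T)$ and $P(T^{(1)}) = 1 = \ell_2(T)$) and the boundary case $h = H$ (where $T^{(H)} = P_{H+1}$ so $P(T^{(H)}) = 1 = \ell_{H+1}(T)$) are immediate, together with their ``moreover'' parts.

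For the inductive step with $0 \le h < H$, I exploit the structural decomposition of $T^{(h)}$: it consists of the induced top $T^{+} := T|_{\h \ge h+1}$ (itself a lush hedge of height $H - h - 1$) together with $|\children_T(v)|$ pendent $(h+1)$-paths hanging from each $v \in V_{h+1}(T)$. For the upper bound on $P(T^{(h)})$, I invoke Lemma~\ref{lem:AppVertex}: lushness ensures each such $v$ has at least two pendent paths, so I combine two of them into a single path through $v$ and leave the remaining $|\children_T(v)| - 2 \ge 0$ pendent paths as standalone paths. Summing over $v \in V_{h+1}(T)$ accounts for $\sum_v(|\children_T(v)| - 1) = |V_h(T)| - |V_{h+1}(T)| = \ell_{h+1}(T)$ paths and consumes every vertex at height $\le h+1$; the remaining uncovered tree $T|_{\h \ge h+2}$ is a lush hedge of height $H - h - 2$, which by the inductive hypothesis at $h' = 0$ contributes a further $\sum_{j \in 2\bZ + h+3,\, j \ge h+3}\ell_j(T)$ paths. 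After a reindexing, the sum matches the target formula as an upper bound.

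The matching lower bound comes from the path-to-hedge construction of Section~\ref{sec:path-construction}: $T^{(h)}$ is itself a hedge of height $H$, and a direct calculation gives $\ell_i(T^{(h)}) = \ell_i(T)$ for $i \ge h+1$ and $\ell_i(T^{(h)}) = 0$ otherwise. Applying Corollary~\ref{cor:critical_mult} to any $A \in \PHset(C_{H+1}^{\Lambda}, T^{(h)})$ for some $\Lambda \in \mathcal B$, one of the distinguished eigenvalues (namely $\alpha_1$ when $h$ is even, or $\alpha_2$ when $h$ is odd) has multiplicity equal to the target formula, and $M(T^{(h)}) = P(T^{(h)})$ converts this into the desired lower bound.

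For the ``moreover'' claim, by Observation~\ref{obs:QinG} it suffices to construct a minimum path cover of $T^{(h)}$ containing the prescribed $Q$. For $Q$ a pendent $(h+1)$-path at $v \in V_{h+1}(T)$ with at least three pendent paths (automatic when $\h(v) \ge 2$ by lushness), I carry out the Nylen pairing at $v$ among $v$'s \emph{other} pendent paths, leaving $Q$ standalone in the resulting minimum cover. The borderline case, which occurs only when $h = 0$ and $v \in V_1(T)$ has exactly two children, is handled by instead taking $Q$ as a singleton and extending the other pendent path through $v$ upward into an existing path of the $T^{+}$-cover; the count remains optimal. For $Q = \{v\}$ a singleton satisfying the condition in (2), the required singleton emerges naturally as a ``leftover'' in the recursive application of the pairing construction inside $T^{+}$: such leftover vertices appear precisely at heights congruent to $h$ modulo $2$, matching the parity hypothesis. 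The principal obstacle I anticipate is the careful verification of these borderline constructions, in particular for part~(2), where a case analysis on $|\children_T(v)|$ versus the lushness threshold and on the recursive placement of leftover singletons is needed.
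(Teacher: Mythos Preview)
Your approach is correct and reaches the goal, but it diverges from the paper's in two structural ways worth noting.

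First, the paper opens with a reduction you omit: contracting each pendent path in $T^{(h)}$ down to its top vertex preserves the path cover number and yields the lush hedge $T|_{\h\ge h}$ of height $H-h$. This collapses every case to $h=0$ on a generic lush hedge, so the induction need only treat $h=0$, and the ``moreover'' claim~(1) becomes a special case of~(2). Your scheme instead carries general $h$ through the induction, which works but duplicates effort (and requires you to handle the degenerate remaining tree $T|_{\h\ge h+2}$ when $h\in\{H-1,H-2\}$, including the $H=0$ base case your proposal skips).

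Second, and more interestingly, the paper does not split the formula into upper and lower bounds. It uses Lemma~\ref{lem:AppVertex} at full strength: Nylen's lemma asserts that the leaf--$v$--leaf path lies in some \emph{minimal} cover, so removing it and iterating (via Observation~\ref{obs:QinG}) shows the constructed cover is itself minimal, giving both bounds at once. You invoke Lemma~\ref{lem:AppVertex} only to build a cover for the upper bound---which is overkill, since any cover bounds $P$ from above---and then appeal to Corollary~\ref{cor:critical_mult} for the lower bound. That appeal is valid and non-circular (the corollary depends only on the branching machinery of Section~\ref{Branching}), and it is a pleasant shortcut: exhibiting a matrix in $\PHset(C_{H+1}^\Lambda,T^{(h)})$ whose multiplicity equals the target immediately gives $M(T^{(h)})\ge$ formula. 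The cost is that your combinatorial lemma now leans on the analytic construction, whereas the paper keeps Section~\ref{sec:comb} self-contained and purely graph-theoretic; indeed, the text just before Definition~\ref{def:M-hat} flags that the formula already ``appeared in Corollary~\ref{cor:critical_mult}'' as a coincidence to exploit later, not as an input here.

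For the ``moreover'' part your sketch is close to the paper's. The paper builds an explicit minimal cover $\mathcal{C}$ consisting only of $P_3$'s (leaf--parent--leaf, centered at odd heights) and $P_1$'s (at even heights), then modifies $\mathcal{C}$ by local swaps to isolate the given $z$. Your borderline manoeuvre---extending upward through the grandparent when the parent has exactly two children---is precisely the paper's second swap. Your vaguer ``leftover singleton'' language for part~(2) is where the paper is more concrete: it observes that in $\mathcal{C}$ every vertex at even height is either already a $P_1$ or an endpoint of a $P_3$, and in the latter case supplies the explicit swap (either with a sibling singleton, or by rerouting through the grandparent). Writing out those two swaps is exactly the work you anticipate.
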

\begin{proof}
First, we observe that for any tree $\widehat T$ with a pendent path $Q_{u,v}$ joining a non-leaf $u$ to a leaf $v$, the graph $\widehat T':=\widehat T\setminus(Q_{u,v}\setminus \{u\})$ obtained by contracting $Q_{u,v}$ to the vertex $u$ 
necessarily respects minimal path covers, and so $P(\widehat T)=P(\widehat T')$. Iterating this process starting with $\widehat T=T^{(h)}$, we obtain a lush hedge. Hence, to establish the path cover formula above, it suffices to prove it in the case $h=0$, and to prove statements (1) and~(2), it suffices to prove statement~(2) only in the case $h=0$ and for $Q=\{z\}$, where $z$ is a vertex with even height in $T=T^{(0)}$. 
We write %
$V_i:=V_i(T)$.

To this end, will show by induction on $H$ that there is a minimal path cover~$\mathcal{C}$ of~$T$ consisting only of $P_3$s and $P_1$s, where every vertex at an odd height together with two of its children forms a $P_3$ in the cover, and, for even $i\in [H]$, the cover $\mathcal{C}$ contains no edge between $V_{i-1}$ and $V_i$. %

If $H=0$, then $T=P_1$ and this is trivial.
If $H=1$, then $T=K_{1,n}$ is a star and the only vertex at an odd height is the root. A minimal path cover involves one $P_3$ from a leaf to the root and back to a leaf along with $P_1$s covering any remaining leaves, and $P(K_{1,n})=n-1=|V_0|-|V_1|=\ell_1(K_{1,n})$.

Next, suppose that $H\geq 2$. We will construct a minimal path cover ${\mathcal C}$ of~$T$, of the type described above.  To start, each vertex in $V_1$ is appropriate as it has at least two children which are pendent leaves. By Lemma \ref{lem:AppVertex} we can start constructing ${\mathcal C}$ %
by taking paths which start at a leaf, go up to their parent vertex at height~$1$, then go back down to a leaf not involving any vertices of height~$2$.  Doing this repeatedly, we cover every vertex at height~$1$. We also include any remaining leaves as singleton paths in the cover~${\mathcal C}$.  The vertices which remain uncovered are those at height~$2$ or greater and the process repeats inductively: the uncovered portion $T':=T\setminus (V_0\cup V_1)$ of the tree has a minimal path cover of the type claimed. Including this in ${\mathcal C}$ completes the construction. 

To establish the formula for $P(T)$, suppose $T$ is a lush hedge of height $H\ge 2$ and the formula for the path cover number has been established for lush hedges of height $H-2$. Then, in particular, $T'=T\setminus(V_0\cup V_1)$ has 
\[ P(T')=\sum_{1\le i\in 2\bZ+1}\ell_i(T')=\sum_{3\le i\in 2\bZ+1}\ell_i(T).\]
Moreover, since $T\setminus T' = T[V_0\cup V_1]$ is a union of stars, it is easy to see that
\[ P(T\setminus T') = \ell_1(T).\]
The minimal path cover $\mathcal{C}$ of~$T$ contains no edge between $T\setminus T'$ and $T'$, so
\[P(T)=P(T\setminus T') + P(T')=\ell_1(T)+\sum_{3\le i\in 2\bZ+1}\ell_i(T)= \sum_{1\le i\in 2\bZ+1}\ell_i(T),\]
  as required.

Now we turn to statement~(2) in the case $h=0$ and $Q=\{z\}$, where $z$ is either a leaf or a vertex at an even height. Consider the minimal path cover~$\mathcal{C}$ described above.  The result follows immediately if $z$ is already a singleton in~$\mathcal{C}$, so assume that it is not. The non-singleton paths in~$\mathcal{C}$ are all of the form $u\text{---}p\text{---}v$, where $u,v$ are siblings of even height and $p$ is their parent, so the path in $\mathcal{C}$ containing~$z$ is of this form, say $z\text{---}p\text{---}v$.
If $p$ has three or more children, then let $u'$ be a child other than $v$ or $z$. Then $u'$ is necessarily a singleton in $\mathcal{C}$, so we can replace the paths $u'$ and $z\text{---} p\text{---} v$ in $\mathcal{C}$ with $z$ and $u'\text{---} p \text{---} v$, to form a new path cover~$\mathcal{C}'$ of~$T$ in which $z$ is a singleton. Otherwise, $p$ is not the root of~$T$ (since the root has at least $3$ children), so let $g$ be the parent of $p$.  Since $g$ is at a positive even height, either $g$ is either a singleton path in~$\mathcal{C}$, or $g$ is the end of a path in~$\mathcal{C}$. In either case, we can form a new path cover $\mathcal{C}'$ of~$T$ by replacing the paths $\cdots \text{---} g$ and $z\text{---} p\text{---} v$ in~$\mathcal{C}$ with $\cdots \text{---} g\text{---} p\text{---} v$ and $z$. 

This shows that we can always find a path cover $\mathcal{C}'$ in which $z$ is a singleton, and so that $|\mathcal{C}'|=|\mathcal{C}|$,  so $\mathcal{C}'$ is also a minimal path cover of~$T$. By Observation~\ref{obs:QinG}, this implies that $P(T\setminus Q)=P(T)-1$.
\end{proof}

Notice that the eigenvalue $\beta_i$ recurs every third step in the $C_n^\Lambda$ sequence given in Proposition \ref{prop:interlacing}.  This is useful because a similar fact holds for the path covers in lush hedges.  This connection is made in the following definition and lemma, which give combinatorial interpretations of the second summation formula in Corollary~\ref{cor:critical_mult}.

\begin{definition}\label{def:M-hat}
  Let $T$ be a hedge of height $H$. For $0\le h\le H$, we write \[\widehat{M}(T^{(h)}):=\sum_{\substack{i\in 3\bZ+h+1,\\i\ge h+1}}\ell_i(T).\]
\end{definition}

\begin{remark}\label{rk:critical-M}
Using this notation and Lemma~\ref{lem:M}, the inequalities~\eqref{ineqs:high-mults2} defining the notion of a \highMultiplicityList{} may be written concisely as
\begin{align*}
\mult(\alpha_j,A)&\ge M(T^{(j-1)}),\;j=1,2,
\\\mult(\beta_j,A)&\ge\widehat M(T^{(j-1)}),\;j=2,3,4,\text{ and}
\\
\mult(\beta_4,A)&<\ell_3(T).
\end{align*}
\end{remark}

\begin{lemma}\label{lem:Sigma}
  Let $T$ be a lush hedge of height $H\ge 2$ and $0\le h\le H$. Let $Q$ be a \pendentpath{(h+1)} in $T^{(h)}$, and \[\widehat{V}:=\{v\in V(T^{(h)}): h+2\le \h(v)\in 3\bZ+h+2\}.\]
  Then
  \[ P(T^{(h)}\setminus (Q\cup \widehat{V}))=\widehat{M}(T^{(h)})-1\text{\quad and\quad}P(T^{(h)}\setminus \widehat{V})=\widehat{M}(T^{(h)}).\]
\end{lemma}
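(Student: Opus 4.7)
My plan is to prove both equalities by making the connected components of $T^{(h)}\setminus\widehat{V}$ explicit, computing their path cover numbers directly, and summing. Because $\widehat{V}$ consists of the vertices at heights $h+2,h+5,h+8,\dots$ in $T^{(h)}$, its removal cuts $T^{(h)}$ into: a \emph{bottom strip} containing all vertices of height at most $h+1$, which further decomposes into the $|V_{h+1}(T)|$ spider-subtrees $S_u$ indexed by $u\in V_{h+1}(T)$, where $S_u$ consists of $u$ together with the $c_u$ pendent $(h+1)$-paths of $T^{(h)}$ attached to $u$; \emph{middle strips} at heights $h+3k,h+3k+1$ for each $k\ge 1$ such that both layers lie in $T^{(h)}$, each a disjoint union of stars $K_{1,c'_u}$ centred at the height-$(h+3k+1)$ vertices; and a possibly-empty \emph{top strip} whose shape depends on the residue of $H-h-2\pmod 3$.

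For each piece I would use the elementary fact that a spider with $c\ge 2$ equal-length legs has path cover number $c-1$, obtained by joining two legs through the hub and leaving each remaining leg as its own path (a special case of Lemma~\ref{lem:AppVertex}), together with the analogous $P(K_{1,c})=c-1$ for $c\ge 2$. Summing over $u\in V_{h+1}(T)$, the bottom strip contributes $\sum_{u}(c_u-1)=|V_h(T)|-|V_{h+1}(T)|=\ell_{h+1}(T)$, and each middle strip contributes $\ell_{h+3k+1}(T)$ by the same identity applied one height higher. A short case split on $H-h\pmod 3$ shows that the top strip supplies $0$ when the root lies in $\widehat{V}$, the singleton contribution $\ell_{H+1}(T)=1$ when the root is one layer above $\widehat{V}$, or the star contribution $\ell_H(T)$ when the root sits two layers above $\widehat{V}$. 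In every case the top contribution closes the arithmetic progression $h+1,h+4,\dots$ of indices appearing in $\widehat{M}(T^{(h)})$, so the telescoping sum gives $P(T^{(h)}\setminus\widehat{V})=\widehat{M}(T^{(h)})$.

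For the first equation, observe that a pendent $(h+1)$-path $Q$ lies entirely in the bottom strip and forms exactly one of the $c_{u_0}$ legs of the single spider $S_{u_0}$ that contains it, where $u_0\in V_{h+1}(T)$ is the vertex to which $Q$ attaches. Removing $Q$ replaces $S_{u_0}$ by a spider with $c_{u_0}-1$ legs, whose path cover number is $(c_{u_0}-1)-1$ whenever $c_{u_0}\ge 3$. The lushness hypothesis provides $c_{u_0}\ge 3$ at every vertex of height $h+1\ge 2$, and all other components are unchanged by the removal of $Q$, so the total path cover drops by exactly one, giving $\widehat{M}(T^{(h)})-1$. I expect the main obstacle to be the bookkeeping for the top strip: the argument itself is routine, but one must carefully track which residue class of $H-h\pmod 3$ places the root in which position relative to $\widehat{V}$ and verify that each case exactly supplies the last term of the sum defining $\widehat{M}(T^{(h)})$.
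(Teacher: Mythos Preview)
Your decomposition is the paper's: cut $T^{(h)}\setminus\widehat V$ into the bottom piece on heights $\le h+1$ and the two-layer pieces $T^{(h)}[V_{i-1}\cup V_i]$ for $i\equiv h+1\pmod 3$, $i\ge h+4$, and sum. The paper contracts the bottom spiders to stars (using that appending a pendent path at a leaf preserves $P$) and handles the top as a degenerate two-layer piece (with $V_i$, or both $V_{i-1}$ and $V_i$, empty), thereby avoiding your case split on $H-h\pmod 3$; these differences are cosmetic. One boundary you should note explicitly: when $h=H$ your spider description of the bottom strip fails because $V_{H+1}(T)=\emptyset$, but that case is trivial (the first equation is vacuous since $P_{H+1}$ has no pendent $(H{+}1)$-path, and the second reads $P(P_{H+1})=1$).

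The substantive point is your observation that the first equation needs $c_{u_0}\ge 3$, which lushness supplies only when $h+1\ge 2$. You leave $h=0$ uncovered, and you are right to: the first equation is \emph{false} for $h=0$ in general. For $T=\bftree$ take $Q$ to be any leaf; then $\widehat V$ is the root, $\widehat M(T^{(0)})=\ell_1=3$, yet $\bftree\setminus(Q\cup\widehat V)\cong P_2\cup P_3\cup P_3$ has path cover $3$, not $2$. The paper's own proof shares this gap (it asserts ``$d'_j\ge 2$ since $T$ is lush'' at the corresponding step, which only holds for $h\ge 1$). Since the lemma is invoked only for $1\le h\le H-1$ in Lemma~\ref{lem:inductive}, this is a defect in the stated range $0\le h\le H$ rather than in your method.
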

\begin{proof}
  Let us write $V_j:=V_j(T^{(h)})$, so that $V_j\subseteq V_j(T)$ and, for $j\ge h$, we have $V_j=V_j(T)$. Then we have the decompositions
  \[ T^{(h)}\setminus \widehat{V}=T^{(h)}[V_0\cup \dots\cup V_{h+1}]\cup \bigcup_{h+4\le i\in 3\bZ+h+1} T^{(h)}[V_{i-1}\cup V_i]\]
  and
  \[ T^{(h)}\setminus(Q\cup \widehat{V})=(T^{(h)}[V_0\cup \dots\cup V_{h+1}]\setminus Q)\cup \bigcup_{h+4\le i\in 3\bZ+h+1} T^{(h)}[V_{i-1}\cup V_i].\]
  The path cover number is additive over disjoint unions, so it suffices to compute the path cover number of each term in these decompositions.
  
  We claim that for $i\ge h+1$,
  \[ P(T^{(h)}[V_{i-1}\cup V_i])= \ell_i.\]
  To see this, note that if $h+1\le i\le H$, then $V_{i-1}$ and $V_i$ are non-empty, and $T^{(h)}[V_{i-1}\cup V_i]$ is of the form $\bigcup_{1\le j\le |V_i|}K_{1,d_j}$, where $d_j\ge 3$ (since $T$ is lush), which has path cover number $\sum_{j=1}^{|V_j|}(d_j-1)=\ell_j$. To verify the remaining degenerate cases: if
  $i>H+1$, then $V_{i-1}=V_i=\emptyset$ and $\ell_i=0$; and if $i=H+1$, then $V_{i-1}=\{v\}$ contains only the root vertex of~$T$ and $V_i=\emptyset$ and $\ell_i=1$.

  As observed 
  in the proof of Lemma~\ref{lem:M} above, adding a pendent path to a tree at a leaf does not change its path cover number. Hence, $P(T^{(h)}[V_0\cup \dots \cup V_{h+1}])=P(T^{(h)}[V_{h}\cup V_{h+1}]=\ell_{h+1}$. It now follows that $P(T\setminus \widehat{V})=\widehat{M}(T^{(h)})$.

  Finally, $P(T^{(h)}[V_0\cup \dots\cup V_{h+1}]\setminus Q)=P(T^{(h)}[V_{h}\cup V_{h+1}]\setminus \{u\})$ where $u$ is the vertex of $Q$ in $V_{h}$. Now $T^{(h)}[V_{h}\cup V_{h+1}]\setminus \{u\}=\bigcup_{j=1}^{|V_{h+1}|}K_{1,d_j'}$ where $d_j'\ge 2$ (since $T$ is lush) and $\sum_{j=1}^{|V_{h+1}|} d_j'=|V_h|-1$. Since $d_j'\ge 2$ for all $j$, this graph has path cover number $\sum_{j=1}^{|V_{h+1}|} (d_j'-1) = \ell_{h+1}-1$. It follows that $P(T^{(h)}\setminus (Q\cup \widehat{V}))=\widehat{M}(T^{(h)})-1$.
\end{proof}

\goodbreak

\goodbreak

\section{The main theorem}
\label{sec:main-theorem}

Recall that for a lush ledge $T$, Corollary~\ref{cor:critical_mult} provides a matrix $A \in \mathcal R(T)$ with a \highMultiplicityList{} (see Definition~\ref{def:critical-multiplicity2}), which, loosely speaking, entails that five eigenvalues of~$A$ have high multiplicities. The following theorem is the main result of the paper, in which we show that every matrix achieving such a \highMultiplicityList{} $\ordm$ arises from path-to-hedge construction and thus its entire spectrum is determined by those five selected eigenvalues and their high multiplicities. 

\begin{theorem}\label{thm:main}
 Let $T$ be a lush hedge of height $H\geq 2$ and $A\in \cs(T)$ with multiplicity list ${\ordm}$ that is a \highMultiplicityList{} for $T$.  Then $A\in \PHset(C_{H+1}^{\Lambda},T)$ for some $\Lambda\in {\mathcal B}$. 
Furthermore, the entire spectrum of every such matrix $A$ is determined solely by~$\Lambda$:
  \[\spec(A)=\bigcup_{i=1}^{H+1} \ell_i(T)\spec(C_i^{\Lambda}).\]
  In particular, we have equality in the first two inequalities in~\eqref{ineqs:high-mults2} for~${\ordm}$.
\end{theorem}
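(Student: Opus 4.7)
The plan is to show, level by level from the leaves upward, that the structural constraints forced by the critical multiplicity list leave no option for $A$ other than to come from the path-to-hedge construction with data $\Lambda$. The principal tools throughout are the Invertible Subtrees Lemma (Lemma~\ref{lem:nullity-bound}) and its Corollaries~\ref{cor:invertibleSubtree} and~\ref{cor:diagonalEntry}, together with the path-cover formulas of Lemmas~\ref{lem:M} and~\ref{lem:Sigma}. I fix five eigenvalues $\alpha_1,\alpha_2,\beta_2,\beta_3,\beta_4$ of $A$ witnessing the critical multiplicity list in the roles of Remark~\ref{rk:critical-M}, set $\Lambda:=(\alpha_1,\alpha_2,\beta_2,\beta_3,\beta_4)$, and aim to verify $w_A\in\PHset(w_{C_{H+1}^{\Lambda}},T)$. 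Once this is established, the spectrum formula is immediate from Theorem~\ref{thm:arbitrary M}; equality in the first two inequalities of~\eqref{ineqs:high-mults2} follows from Corollary~\ref{cor:critical_mult} combined with the trivial upper bound $\mult(\alpha_j,A)\le M(T^{(j-1)})$; and $\Lambda\in\mathcal{B}$ is then forced by the positivity of the $b_k$ coefficients of $C_{H+1}^{\Lambda}$ via Proposition~\ref{prop:interlacing}.

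I first handle the diagonals. Since $M(T)$ is the maximum multiplicity attainable in $\cs(T)$, the inequality $\mult(\alpha_1,A)\ge M(T)$ must be an equality, and Corollary~\ref{cor:diagonalEntry} together with Lemma~\ref{lem:M}(2) at $h=0$ pins down $a_{v,v}=\alpha_1$ at every vertex of even height (in particular, every leaf). The leaves are therefore trivially collapsible, and I form $A^{(1)}\in\cs(T^{(1)})$ with $\spec(A^{(1)})=\spec(A)\setminus\ell_1\{\alpha_1\}$ by Proposition~\ref{prop:branch-spec}. A short calculation using Lemma~\ref{lem:decreasing-ells} yields $\mult(\alpha_1,A^{(1)})\le M(T^{(1)})\le\mult(\alpha_2,A^{(1)})$, so $\alpha_2$ achieves the maximum multiplicity in $A^{(1)}$, and a second application of Corollary~\ref{cor:diagonalEntry} with Lemma~\ref{lem:M}(2) at $h=1$ identifies $a_{v,v}=\alpha_2$ at every vertex of odd height at least $3$.

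To pin down the height-$1$ diagonals and level-$1$ edge-weight sums, I apply Corollary~\ref{cor:invertibleSubtree}(2) twice to $A^{(1)}$ for each pendent $2$-path $Q$ of $T^{(1)}$. For $\alpha_2$, the bound $\mult(\alpha_2,A^{(1)})=M(T^{(1)})>M(T^{(1)})-1=P(T^{(1)}\setminus Q)$ from Lemma~\ref{lem:M}(1) puts $\alpha_2\in\spec(A^{(1)}[Q])$. For $\beta_2$, I use the independent family $\{Q\}\cup\{\{w\}:w\in\widehat V\}$ from Lemma~\ref{lem:Sigma} with $h=1$; by the previous paragraph each $w\in\widehat V$ sits at height $3,6,9,\dots$ and so carries diagonal in $\{\alpha_1,\alpha_2\}\ne\beta_2$, so the bound $\mult(\beta_2,A^{(1)})\ge\widehat M(T^{(1)})>\widehat M(T^{(1)})-1$ forces $\beta_2\in\spec(A^{(1)}[Q])$ as well. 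The $2\times 2$ submatrix $A^{(1)}[Q]$ thus has spectrum $\{\alpha_2,\beta_2\}$, and trace-determinant yield $a_{v,v}=\alpha_2+\beta_2-\alpha_1$ at every $v$ of height $1$ and $\sum_{u\in\children(v)}w_A(v,u)=(\alpha_1-\alpha_2)(\beta_2-\alpha_1)=b_2$ of~\eqref{eq:b_is}. All pendent $2$-paths at each height-$2$ vertex then become collapsible, and I form $A^{(2)}\in\cs(T^{(2)})$.

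The rest proceeds by iteration. At level $h\ge 2$, every entry of $A^{(h)}[Q]$ for a pendent $(h+1)$-path $Q$ of $T^{(h)}$ is already determined by the previous levels \emph{except} for the single edge weight at the top of $Q$, so $A^{(h)}[Q]$ has the shape of the trailing principal submatrix of $C_{H+1}^{\Lambda}$ up to that one unknown. Applying Corollary~\ref{cor:invertibleSubtree}(2) to $A^{(h)}$ with the eigenvalue $\beta_j$ (for the unique $j\in\{2,3,4\}$ satisfying $j\equiv h+1\pmod 3$) and the family $\{Q\}\cup\{\{w\}:w\in\widehat V\}$ from Lemma~\ref{lem:Sigma} forces $\beta_j\in\spec(A^{(h)}[Q])$, and a short determinant computation then pins the remaining edge weight to $b_{h+1}$ of~\eqref{eq:b_is}. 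Hence $A^{(h)}[Q]=C_{h+1}^{\Lambda}$, the collapsibility condition at the next level is met, and after $H$ iterations the residual matrix $A^{(H)}\in\cs(P_{H+1})$ coincides with $C_{H+1}^{\Lambda}$, so Proposition~\ref{prop:collapsing} delivers $A\in\PHset(C_{H+1}^{\Lambda},T)$. The main technical obstacle I anticipate is the careful multiplicity bookkeeping across all $H$ levels---verifying that each collapse preserves enough multiplicity of the relevant $\beta_j$ and that no $w\in\widehat V$ can carry diagonal $\beta_j$---which relies on Lemma~\ref{lem:decreasing-ells} together with the calibrated condition $n_4<\ell_3(T)$ in the critical multiplicity list definition, precisely the condition that prevents $\beta_j$-multiplicity from being spuriously absorbed at any level.
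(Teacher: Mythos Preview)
Your proposal is correct and follows essentially the same route as the paper: fix diagonals via Corollary~\ref{cor:diagonalEntry} and Lemma~\ref{lem:M}, then climb level by level using Corollary~\ref{cor:invertibleSubtree} with the independent family $\{Q\}\cup\{\{w\}:w\in\widehat V\}$ from Lemma~\ref{lem:Sigma}, collapsing after each step and invoking Proposition~\ref{prop:collapsing} at the end. The paper packages the recurring step as a standalone Lemma~\ref{lem:inductive}, but the content is the same.

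One sharpening of your final remark: the condition $n_4<\ell_3(T)$ is not needed for the generic multiplicity bookkeeping or for the $\widehat V$ diagonals (those already lie in $\{\alpha_1,\alpha_2\}$ once you have pinned the diagonals at heights $\ge 2$). Its sole role is to exclude the degeneracy $\alpha_2+\beta_2=\beta_3+\beta_4$, which is a standing hypothesis of Proposition~\ref{prop:interlacing} and is required from level~$3$ onward. Concretely, once every pendent $3$-path in $T^{(2)}$ carries weight $w_{C_3^\Lambda}$ with spectrum $\{\alpha_1,\beta_3,\alpha_2+\beta_2-\beta_3\}$, equation~\eqref{eq:collapsed-spectrum} gives $\mult(\alpha_2+\beta_2-\beta_3,A)\ge\ell_3$; were this third eigenvalue equal to $\beta_4$, that would contradict $n_4<\ell_3$. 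The paper inserts this check explicitly between the base case $h\le 2$ and the general induction $h\ge 3$.
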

Since $\S(T)\subseteq \R(T)$, this theorem also applies in the more standard context of symmetric matrices with graph $T$.

In the rest of this section we prove Theorem~\ref{thm:main}. We will first establish spectral conditions which imply that a matrix $A \in \cs(T)$ necessarily comes from the path-to-hedge construction.
Our first lemma gives information on diagonal entries and \pendentpath{2}s. 

\begin{lemma}\label{prop:alpha-vertices}
  Let $T$ be a lush hedge, $A=(a_{i,j})\in\cs(T)$ and $\alpha_1,\alpha_2$ distinct real numbers with
  \begin{equation*}
    \mult(\alpha_j,A)\ge M(T^{(j-1)})
  \end{equation*}
    for $j\in[2]$.
  \begin{enumerate}[(C1)]
  \item\label{claim1} If $v\in V(T)$ with $\h(v)\ne 1$, then
    \[ a_{v,v}=
      \begin{cases}
        \alpha_1&\text{if $\h(v)$ is even,}\\
        \alpha_2&\text{if $\h(v)$ is odd.}
      \end{cases}
    \]
\item\label{claim3} We can collapse leaves in $A$ (as in Definition~\ref{def:collapsing-trees-matrices}) to obtain a matrix $A^{(1)}\in \cs(T^{(1)})$, and $\alpha_2\in \spec(A^{(1)}[Q])$ for every \pendentpath{2} $Q$ in $T^{(1)}$.
  \end{enumerate}
\end{lemma}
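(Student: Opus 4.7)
The plan is to prove the two parts of the proposition in order: (i) the statement about $a_{v,v}$ for $v$ of even height, (ii) the collapsibility part of the second claim, (iii) the statement about $a_{v,v}$ for $v$ of odd height at least $3$, and (iv) the eigenvalue part of the second claim. The main tools will be Corollary~\ref{cor:diagonalEntry} (applied first to $A$ and then to a collapsed matrix $A^{(1)}$), the structural Lemma~\ref{lem:M}, and Corollary~\ref{cor:invertibleSubtree}.

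First I would dispatch the even-height part of the first claim. Since $M(T) = M(T^{(0)})$ is the maximum multiplicity of any eigenvalue of a matrix in $\cs(T)$, the hypothesis forces $\mult(\alpha_1, A) = M(T)$. Lemma~\ref{lem:M}(2) with $h = 0$ produces, for every $v \in V(T)$ of even height, a minimal path cover of $T$ in which $\{v\}$ is a singleton, and then Corollary~\ref{cor:diagonalEntry} gives $a_{v,v} = \alpha_1$. In particular, every leaf has vertex weight $\alpha_1$, so the sibling leaves of each height-$1$ vertex are collapsible in $A$; performing these collapses yields $A^{(1)} \in \cs(T^{(1)})$, which establishes the first part of the second claim. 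Iterating Proposition~\ref{prop:branch-spec} through the collapses gives $\sigma(A) = \sigma(A^{(1)}) \cup s\{\alpha_1\}$ for some $s \geq 0$, whence $\mult(\alpha_2, A^{(1)}) = \mult(\alpha_2, A) \geq M(T^{(1)})$, with equality because $M(T^{(1)})$ is maximal over $\cs(T^{(1)})$.

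Next I would handle the odd-height part of the first claim. Fix $v$ with $\h(v) \geq 3$ odd. The distinguished children in Definition~\ref{def:chain-subtrees} may be chosen freely, so re-selecting them if needed, I pick a height-$(H-1)$ ancestor of $v$ and declare its successive distinguished descendants to follow the path down to $v$, ensuring $v \in V(T^{(1)})$. Since collapsing leaves does not alter the diagonal entry at any non-leaf vertex, $a^{(1)}_{v,v} = a_{v,v}$. A second application of Corollary~\ref{cor:diagonalEntry}, now to $A^{(1)} \in \cs(T^{(1)})$ with $\lambda = \alpha_2$ and using Lemma~\ref{lem:M}(2) at $h = 1$ (applicable because $\h(v) \geq 3 > 1$ has the same odd parity as $h$), then gives $a_{v,v} = \alpha_2$.

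Finally, for the eigenvalue part of the second claim, let $Q$ be any \pendentpath{2} in $T^{(1)}$. Lemma~\ref{lem:M}(1) with $h = 1$ places $Q$ in a minimal path cover of $T^{(1)}$, so $P(T^{(1)} \setminus Q) = M(T^{(1)}) - 1 < \mult(\alpha_2, A^{(1)})$; Corollary~\ref{cor:invertibleSubtree}(2) with $c = 1$ and $T_1 = Q$ then yields $\alpha_2 \in \spec(A^{(1)}[Q])$. The main subtlety requiring care is this re-selection of distinguished children in the odd-height case: although different choices produce different $V(T^{(1)})$, the diagonal entry $a_{v,v}$ is intrinsic to $A$, so the conclusion is unambiguous.
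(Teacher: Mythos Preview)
Your proof is correct and follows essentially the same route as the paper: apply Corollary~\ref{cor:diagonalEntry} with Lemma~\ref{lem:M} first to $A$ (for even heights and $\alpha_1$), collapse leaves, then apply the same pair to $A^{(1)}$ (for odd heights and $\alpha_2$), and finish with Corollary~\ref{cor:invertibleSubtree} for the pendent $2$-paths. The one small difference is in extending the odd-height conclusion from $V(T^{(1)})$ to all of $V(T)$: the paper observes that $A$ arises from $A^{(1)}$ by summand duplication, which copies diagonal entries to all vertices at the same height, whereas you instead re-select distinguished children so that any given odd-height vertex lands in $T^{(1)}$. Both arguments are valid; the paper's is slightly cleaner since it works with a single fixed $T^{(1)}$ throughout, but your observation that $a_{v,v}$ is intrinsic to $A$ adequately handles the ambiguity you introduce.
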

\begin{proof}
  By Corollary~\ref{cor:diagonalEntry} and~Lemma~\ref{lem:M}, we have $a_{v,v}=\alpha_1$ whenever $v$ is a vertex of even height. This includes all leaf vertices, so we can collapse leaves in $A$ to obtain $A^{(1)}\in \cs(T^{(1)})$. This collapsing removes $\ell_1$ leaves $u$ from $T$, where in each case $A[\{u\}]=(\alpha_1)$, so $\spec(A^{(1)})=\spec(A)\setminus \ell_1 \{\alpha_1\}$, by equation~\eqref{eq:collapsed-spectrum}. In particular, \[\mult(\alpha_2,A^{(1)})=\mult(\alpha_2,A)\ge M(T^{(1)}).\] %
  Applying Corollary~\ref{cor:diagonalEntry} and Lemma~\ref{lem:M} to $A^{(1)}$, we have $a^{(1)}_{v,v}=\alpha_2$ for every vertex $v$ of $T^{(1)}$ with $\h(v)$ odd and $\h(v)\ne 1$. Since $A$ is obtained by successive summand duplication of $A^{(1)}$ (which copies diagonal entries from diagonal entries of $A^{(1)}$ to diagonal entries of~$A$ at the same height), \ref{claim1} follows.

  If $Q$ is a \pendentpath{2} in $T^{(1)}$,
  then by Lemma~\ref{lem:M},
  \[P(T^{(1)}\setminus Q)<P(T^{(1)})=M(T^{(1)})\le \mult(\alpha_2,A^{(1)}).  \]
  By Corollary~\ref{cor:invertibleSubtree}, $\alpha_2$ is an eigenvalue
  of $A^{(1)}[Q]$, so \ref{claim3} holds.
\end{proof}

The next lemma underpins the inductive step of our main theorem, as it will allow us to perform repeated collapses at successively greater heights. 

\begin{lemma}\label{lem:inductive}
  Let $\alpha,\alpha',\beta$ be distinct real
  numbers, $1\le h\le H-1$, $T$ a
  lush hedge of height $H\geq 2$, and $w_h\in W(P_h)$. Let $B\in \cs(T^{(h)})$ with  \[\mult(\beta,B)\ge \widehat{M}(T^{(h)})\] and $b_{v,v}\ne \beta$ for all vertices $v$ of $T^{(h)}$ with $\h(v)\ge3$. Furthermore, suppose:
  \begin{enumerate}[(P1)]
  \item\label{1hypothesis1} Every \pendentpath{h} $Q$ in $T^{(h)}$ satisfies:
  \begin{itemize}
  \item $w_{B[Q]}=w_h$, 
  \item $\alpha \in \spec(B[Q])$, 
  \item and if $h>1$ then $\alpha'\in \spec(B[Q_0])$, where $Q_0$ is the \pendentpath{(h-1)} in $T^{(h)}$ obtained from $Q$ by removing a vertex ($v \in V(Q)$ of $\h(v)=h-1$).
  \end{itemize}
  \item\label{1hypothesis3} If $h=1$, then $\alpha'\in \spec(B[Q'])$ for every \pendentpath{2} $Q'$ in $T^{(1)}$; and if $h>1$, then $b_{u,u}=\alpha'$ for every vertex $u$ of $T^{(h)}$ of height $h$.
   \end{enumerate}
   Then there is a weight $w_{h+1}\in W(P_{h+1})$ so that for all \pendentpath{(h+1)}s $Q'$ in $T^{(h)}$, we have $w_{B[Q']}=w_{h+1}$, and $\alpha'$ and $\beta$ are both eigenvalues of $B[Q']$.
\end{lemma}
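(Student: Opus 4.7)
The plan is, for an arbitrary \pendentpath{(h+1)} $Q' = Q \cup \{u\}$ in $T^{(h)}$ (where $Q$ is a \pendentpath{h} and $u$ is the unique vertex of $Q'$ at height $h$), to establish the conclusion in two stages: first show $\beta \in \spec(B[Q'])$ via the Invertible Subtrees Lemma, and then use this together with (P\ref{1hypothesis1}), (P\ref{1hypothesis3}) and a characteristic-polynomial recursion to pin down $w_{B[Q']}$ uniquely in terms of the parameters $w_h, \alpha, \alpha', \beta$ alone.

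For the first step, I would argue by contradiction. Suppose $B[Q'] - \beta I$ is invertible. Consider the family of mutually independent subtrees of $T^{(h)}$ consisting of $Q'$ together with all singletons $\{v\}$ for $v$ in the set $\widehat V := \{v \in V(T^{(h)}) : h + 2 \le \h(v) \in 3\bZ + h + 2\}$ from Lemma~\ref{lem:Sigma}. Mutual independence follows from the height gap: vertices of $Q'$ have height at most $h$, those in $\widehat V$ have height at least $h+2$, and any two distinct heights in $\widehat V$ differ by a multiple of $3$, so the relevant pairs are never adjacent in $T^{(h)}$. The principal submatrices $B[\{v\}] - \beta I = (b_{v,v} - \beta)$ are invertible by the hypothesis $b_{v,v} \ne \beta$ for $\h(v) \ge 3$ (which covers $\widehat V$ since $h \ge 1$). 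Applying Lemma~\ref{lem:nullity-bound} to $B - \beta I \in \cs(T^{(h)})$ and then Lemma~\ref{lem:Sigma} yields $\mult(\beta, B) = \nullity(B - \beta I) \le P(T^{(h)} \setminus (Q' \cup \widehat V)) = \widehat M(T^{(h)}) - 1$, contradicting the hypothesis $\mult(\beta, B) \ge \widehat M(T^{(h)})$.

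For the second step, there are two cases. If $h = 1$, then $Q = \{y_1\}$ is a single leaf with $b_{y_1,y_1} = \alpha$ by (P\ref{1hypothesis1}), so $B[Q']$ is a $2 \times 2$ matrix with a known $(y_1, y_1)$-entry and eigenvalue set $\{\alpha', \beta\}$ coming from (P\ref{1hypothesis3}) and step~(i); trace and determinant then determine $b_{u,u}$ and $w_B(u, y_1)$ entirely in terms of $\alpha, \alpha', \beta$. If $h > 1$, then (P\ref{1hypothesis3}) gives $b_{u,u} = \alpha'$, and writing $y_1$ for the top vertex of $Q$ and $Q_0 = Q \setminus \{y_1\}$, the tridiagonal recursion
\[
p_{B[Q']}(\lambda) = (\lambda - \alpha')\,p_{B[Q]}(\lambda) - w_B(u,y_1)\,p_{B[Q_0]}(\lambda)
\]
evaluated at $\beta$ yields
\[
w_B(u, y_1) = \frac{(\beta - \alpha')\,p_{B[Q]}(\beta)}{p_{B[Q_0]}(\beta)},
\]
provided $p_{B[Q_0]}(\beta) \ne 0$. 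The latter holds because if $\beta \in \spec(B[Q_0])$ then the recursion forces $\beta \in \spec(B[Q])$, contradicting Cauchy interlacing (applied to the symmetric tridiagonal matrix diagonally similar to $B[Q]$, whose superdiagonal is entrywise positive). Moreover, $\alpha' \in \spec(B[Q'])$ then drops out automatically: the same recursion at $\lambda = \alpha'$ gives $p_{B[Q']}(\alpha') = -w_B(u, y_1)\,p_{B[Q_0]}(\alpha') = 0$, using $\alpha' \in \spec(B[Q_0])$ from the third bullet of (P\ref{1hypothesis1}). Since $p_{B[Q]}$ and $p_{B[Q_0]}$ depend only on $w_h$, the derived value of $w_B(u, y_1)$ is independent of the choice of $Q'$, giving a common weight $w_{h+1} \in W(P_{h+1})$.

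The main obstacle I anticipate is the bookkeeping in the first step: choosing $\widehat V$ precisely so that Lemma~\ref{lem:Sigma} supplies the exact bound $\widehat M(T^{(h)}) - 1$, and verifying mutual independence of the resulting family. Once that combinatorial reduction is secured, the characteristic-polynomial recursion and the Cauchy interlacing argument in the second step are straightforward.
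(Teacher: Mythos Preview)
Your proposal is correct and follows essentially the same route as the paper: the Invertible Subtrees Lemma applied with $Q'\cup\widehat V$ and Lemma~\ref{lem:Sigma} to force $\beta\in\spec(B[Q'])$, followed by the tridiagonal characteristic-polynomial recursion to pin down $b_{u,u}$ and the edge weight $w_B(u,y_1)$. The only cosmetic differences are that the paper establishes $\alpha'\in\spec(B[Q'])$ before $\beta$ rather than after, and it justifies the nonvanishing denominator by citing Lemma~\ref{lem:steps} (strict interlacing for consecutive path submatrices) in place of your direct Cauchy interlacing argument; these are equivalent.
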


\begin{proof}
 Let $Q'$ be a \pendentpath{(h+1)} in $T^{(h)}$, joining a vertex $u$ to a \pendentpath{h} $Q$. Then: $$B[Q']=
  \left(\begin{smallmatrix}
      b_{u,u}& c_u{\bf e}_1\trans\\
      d_u{\bf e}_1& B[Q]
    \end{smallmatrix}\right)$$
   where ${\bf e}_1=(1,0,0,\dots,0)\trans\in \bR^{h}$ and $w_{B[Q]}=w_h$ by~\ref{1hypothesis1}.
   For $h-1\le i\le h+1$, let $\Delta_i$ be the characteristic polynomial of the trailing principal $i\times i$ submatrix of $B[Q']$, i.e $\Delta_{h+1}$ is the characteristic polynomial of $B[Q']$, $\Delta_h$ is the characteristic polynomial of $B[Q]$, and $\Delta_{h-1}$ is the characteristic polynomial of $B[Q_0]$ as defined in the statement. (In the degenerate case, $\Delta_0:=1$.)
  
  We first show that $\alpha'\in \spec(B[Q'])$. If $h=1$, see~\ref{1hypothesis3}. If $h>1$, then by~\ref{1hypothesis3} again, $b_{u,u}=\alpha'$,
  so
  \begin{equation}\label{eq:1Delta}
    \Delta_{h+1}(x)=(x-\alpha')\Delta_{h}(x)-c_ud_u\Delta_{h-1}(x)
  \end{equation}
  and $\Delta_{h-1}(\alpha')=0$ by~\ref{1hypothesis1}, so $\Delta_{h+1}(\alpha')=0$ by \eqref{eq:1Delta}, as required. 

  We will now apply Corollary~\ref{cor:invertibleSubtree} to show that $\beta\in \spec(B[Q'])$. Consider \[\widehat{V}=\{v\in V(T^{(h)}): h+2\le \h(v)\in 3\bZ+h+2\}.\]
  Note that $\{Q'\}\cup \{\{v\}:v\in \widehat{V}\}$ is an independent set of paths in $T^{(h)}$; see Figure~\ref{fig:induction}, where these paths are colored orange.  
  By Lemma~\ref{lem:Sigma}, we have \[P(T^{(h)}\setminus (Q'\cup \widehat{V}))=\widehat{M}(T^{(h)})-1<\widehat{M}(T^{(h)})\le\mult(\beta,B).\]  %
Now Corollary~\ref{cor:invertibleSubtree} together with the assumption $b_{v,v} \neq \beta$ for $v \in \hat V$ immediately implies that $\beta\in \spec(B[Q'])$.

  \inductiveStepPicture 

Finally, we must show that $w_{B[Q']}$ is independent of our choice of $Q'$. By~\ref{1hypothesis1}, we already know that $w_{B[Q]}=w_h$ does not depend on $Q'$, and if $h>1$, then $b_{u,u}=\alpha_{h+1}$ by~\ref{1hypothesis3}. If $h=1$, then the weight $w_1$ from \ref{1hypothesis1} takes a single value, say $b_1$, so the trace of $B[Q']$ is equal to $b_1+b_{u,u}$. On the other hand, $B[Q']$ has the spectrum $\{\alpha_2,\beta\}$, so $b_{u,u}=\alpha_2+\beta-b_1$.

It remains to show that the product $c_ud_u$ is independent of $Q'$. Since $\Delta_{h+1}(\beta)=0$, $\alpha'\ne \beta$ by assumption, and $\Delta_{h}(\beta)\ne0$ by 
Lemma~\ref{lem:steps}, equation~\eqref{eq:1Delta} gives  $c_ud_u=(\beta-\alpha_{h+1})\frac{\Delta_{h}(\beta)}{\Delta_{h-1}(\beta)}$, completing the proof that $w_{B[Q']}$ is independent of $Q'$.
\end{proof}

\begin{proof}[Proof of Theorem~\ref{thm:main}]
  It is %
  convenient to define $M(T^{(i)})=\widehat{M}(T^{(i)})=0$ for $i>H$.
  
 Since $A$ has a \highMultiplicityList{}, there are five distinct real numbers $\alpha_1,\alpha_2,\beta_2,\beta_3,\beta_4$ with multiplicities $m_1,m_2,n_2,n_3,n_4\ge0$ in $\sigma(A)$
 satisfying~\eqref{ineqs:high-mults2}. 
 In view of Remark~\ref{rk:critical-M}, we have
 \begin{gather*}
     \mult(\alpha_j,A)\ge M(T^{(j-1)}),\quad j=1,2,\text{ and}\\
     \mult(\beta_j,A)\ge \widehat M(T^{(j-1)}),\quad j=2,3,4.
 \end{gather*}
The inequalities for $\mult(\alpha_j,A)$ allow us to use Lemma \ref{prop:alpha-vertices} for $A$. Hence, $a_{v,v}=\alpha_1$, if $\h(v)$ is even,  $a_{v,v}=\alpha_2$, if $\h(v)$ is odd and $\h(v)\neq 1$. Note that this fixes the corresponding diagonal entries of any matrix obtained by collapsing from $A$, since collapsing does not change the value of non-deleted diagonal entries. Lemma \ref{prop:alpha-vertices} allows the first collapse of~$A$ to obtain $A^{(1)} \in \R(T^{(1)})$ with all the diagonal entries except those corresponding to vertices at height $1$ equal to either $\alpha_1$ or $\alpha_2$ as above. 
We have
  $\spec(A^{(1)})=\spec(A)\setminus \ell_1\{\alpha_1\}$ by Lemma~\ref{prop:alpha-vertices} and equation~\eqref{eq:collapsed-spectrum} on page~\pageref{eq:collapsed-spectrum},
  so 
  \[\mult(\beta_j,A^{(1)})=\mult(\beta_j,A),\qquad  j=2,3,4.\]
  
Now our aim is to show that we can repeatedly collapse $A^{(h-1)} \in \R(T^{(h-1)})$ to  $A^{(h)} \in \R(T^{h})$ using Lemma~\ref{lem:inductive}. From above we already know that \ref{1hypothesis3} for $h>1$ in this lemma will be satisfied for any matrix $B \in \R(T^{(h)})$ obtained by repeated collapsing from $A$, where $\alpha'$ alternates between $\alpha_1$ for $h$ odd  and $\alpha_2$ for $h$ even. The technical condition $b_{v,v} \neq \beta$ if $\h(v)\geq 3$ also clearly holds.  For $h=1$,  \ref{1hypothesis3} is satisfied by \ref{claim3} in Lemma~\ref{prop:alpha-vertices}.

Let us write $\Lambda_1=(\alpha_1)$, $\Lambda_2=(\alpha_1,\alpha_2,\beta_2)$, $\Lambda_3=(\alpha_1,\alpha_2,\beta_2,\beta_3)$ and $\Lambda_j=\Lambda=(\alpha_1,\alpha_2,\beta_2,\beta_3,\beta_4)$ for $j\ge4$. We also write ${\mathcal B}_j=\mathcal{B}$ for $j\ge 4$. We claim that  for $h=1,\dots,H$ there exist matrices $A^{(h)}\in \cs(T^{(h)})$ so that:
  \def\hcounter{h}
  \newcommand{\Sref}[2][h]{{\def\hcounter{#1}\ref{S-#2}}}
  \begin{enumerate}[(S1)${}_{\protect\hcounter}$]
  \item\label{S-collapse1} $A^{(0)}=A$, and $A^{(h)}$ is
    obtained by collapsing \pendentpath{h}s in $A^{(h-1)}$.
  \item\label{S-mults1} 
    $\mult(\beta_j,A^{(h)})\ge \widehat{M}(T^{(j-1)})$ for
    $j=h+1,h+2,h+3$.
      \item\label{S-pendent1} $\Lambda_{h+1}\in {\mathcal B}_{h+1}$ and $w_{A^{(h)}[Q_{h+1}]}=w_{C_{h+1}^{\Lambda_{h+1}}}$ for every \pendentpath{(h+1)} $Q_{h+1}$ in $T^{(h)}$.
  \end{enumerate}

 We have already established above that \Sref[1]{collapse1} and \Sref[1]{mults1} hold. 
 Now we can apply Lemma~\ref{lem:inductive} with $h=1$, $\beta=\beta_2$, $B=A^{(1)}$, hence  $\beta_2\in \spec(A^{(1)}[Q'])$ for any pendent $2$-path $Q'$ in $T^{(1)}$. Since we already know that $\alpha_1$ is a diagonal element of $A^{(1)}[Q']$ and  $\alpha_2\in \spec(A^{(1)}[Q'])$, we have $w_{A^{(1)}[Q']}=w_{C_2^{\Lambda_2}}$ where $\Lambda_2=(\alpha_1,\alpha_2,\beta_2)\in \mathcal{B}_2$, by Proposition~\ref{prop:interlacing}, proving \Sref[1]{pendent1}. %
 
  Now we want to establish \Sref[2]{collapse1}--\Sref[2]{pendent1}. By \Sref[1]{pendent1}, we can collapse \pendentpath{2}s in $A^{(1)}$ to obtain $A^{(2)}\in \cs(T^{(2)})$, so \Sref[2]{collapse1} holds.
  By Proposition~\ref{prop:interlacing},
 $\spec(C_2^\Lambda)=\{\alpha_2,\beta_2\}$,
  hence by \Sref[1]{pendent1} and equation~(\ref{eq:collapsed-spectrum}):
  \begin{gather*}%
    \mult(\beta_2,A^{(2)})=\mult(\beta_2,A^{(1)})-\ell_2,\quad\text{and}\\
    \mult(\lambda,A^{(2)})=\mult(\lambda,A^{(1)}),\quad \lambda\in  \{\beta_3,\beta_4\}.
  \end{gather*}
  Observing $\beta_{5}=\beta_2$ and $\widehat{M}(T^{(4)})=\widehat{M}(T^{(1)})-\ell_2$,~\Sref[2]{mults1} now follows directly from~\Sref[1]{mults1}. 
  
  Next, observe that the hypotheses of Lemma~\ref{lem:inductive} are satisfied (for $h=2$) by $\beta:=\beta_{3}$, $\alpha:=\alpha_{1}=\alpha_3$  and $B:=A^{(2)}$. Indeed, we have $\mult(\beta_{3},A^{(2)})\ge \widehat{M}(T^{(2)})$ by \Sref[2]{mults1}; and \ref{1hypothesis1} follows from \Sref[1]{pendent1}.
  By Lemma~\ref{lem:inductive}, there is a matrix $B_{3}$ with  $\sigma(B_3)=\{\alpha_{1},\beta_{3},\alpha_2+\beta_2-\beta_3\}$ so that $w_{A^{(2)}[Q]}=w_{B_{3}}$ whenever $Q$ is a \pendentpath{3} in $T^{(2)}$. By~\Sref[1]{pendent1},  $w_{B_{3}(1)}=w_{C_2^{\Lambda_2}}$, so by Proposition~\ref{prop:interlacing} and its proof, we have $\Lambda_3\in\mathcal{B}_3$ and $w_{B_{3}}=w_{C_{3}^{\Lambda_3}}$, so \Sref[2]{pendent1} holds. Note also that $\sigma(C_3^{\Lambda_3})=\sigma(B_3)=\{\alpha_1,\beta_3,\alpha_2+\beta_2-\beta_3\}$.

  If $H=2$, we are done. Otherwise, $H\ge 3$, and before proceeding inductively, we wish to show that $\alpha_2+\beta_2\ne\beta_3+\beta_4$, in order to apply Proposition~\ref{prop:interlacing}. Suppose instead (for a contradiction) that $\alpha_2+\beta_2=\beta_3+\beta_4$; then $\beta_4=\alpha_2+\beta_2-\beta_3\in \sigma(C_3^{\Lambda_3})$, as noted above.  By \Sref[2]{pendent1}, we can collapse \pendentpath{3}s in $A^{(2)}$ to obtain $A^{(3)}\in \cs(T^{(3)})$.
  By equation~(\ref{eq:collapsed-spectrum}), we have 
  \[
  \ell_3\sigma(C_3^{\Lambda_3})\subseteq \sigma(A^{(2)}),
  \] so by \eqref{ineqs:high-mults2},
  \begin{gather*}
    \ell_3\le \mult(\beta_4,A^{(2)})= \mult(\beta_4,A)=n_4<\ell_3
  \end{gather*}
  a contradiction.
  Hence, we have $\alpha_2+\beta_2\ne \beta_3+\beta_4$.
  We remark, that up to this point we did not need the hypothesis on the multiplicity of $\beta_4$, and we have proved there exists an eigenvalue $\alpha_2+\beta_2-\beta_3$ in $A$ of multiplicity $m'$, and $m'\ge \ell_3$. %
  Moreover, if $\lambda\not\in \{\alpha_1,\alpha_2,\beta_2,\beta_3,\alpha_2+\beta_2-\beta_3\}$, then $\mult(\lambda,A)=\mult(\lambda,A^{(3)})\le M(T^{(3)})<\ell_3$ by equation~(\ref{eq:collapsed-spectrum}), Lemma~\ref{lem:decreasing-ells} and Lemma~\ref{lem:M}, so $\alpha_2+\beta_2-\beta_3$ is the unique eigenvalue of $A$ with multiplicity at least $\ell_3$, apart from $\alpha_1,\alpha_2,\beta_2,\beta_3$. This is summarized in Corollary~\ref{cor:four-multiplicities}.\label{proof:side-product} 

  Now suppose $3\le h\le H$, and $A^{(h-1)}\in \cs(T^{(h-1)})$ satisfies
  \Sref[h-1]{collapse1}, \Sref[h-1]{mults1} and \Sref[h-1]{pendent1}. 
  Since $\alpha_2+\beta_2\ne \beta_3+\beta_4$, by \Sref[h-1]{pendent1} and Proposition~\ref{prop:interlacing},
  \begin{equation*}\label{eq:spectrum-Lambda}\spec(C_h^{\Lambda_h})\cap \Lambda=\{\alpha_h,\beta_h\}.\end{equation*}
  Moreover, by \Sref[h-1]{pendent1}, we can collapse \pendentpath{h}s in $A^{(h-1)}$ to obtain $A^{(h)}\in \cs(T^{(h)})$, so \Sref[h]{collapse1} holds.
  By equation~(\ref{eq:collapsed-spectrum}) on page~\pageref{eq:collapsed-spectrum},
  \begin{gather*}
    \mult(\beta_h,A^{(h)})=\mult(\beta_h,A^{(h-1)})-\ell_h,\quad  \quad\text{and}\\
    \mult(\lambda,A^{(h)})=\mult(\lambda,A^{(h-1)}),\quad \lambda\in \{\beta_{h+1},\beta_{h+2}\}.
  \end{gather*}
   Now~\Sref[h]{mults1} follows from~\Sref[h-1]{mults1} by observing that 
$M(T^{(h+1)})=M(T^{(h-1)})-\ell_h$, $\beta_{h+3}=\beta_h$ and $\widehat{M}(T^{(h+2)})=\widehat{M}(T^{(h-1)})-\ell_h$. 
Next, observe that the hypotheses of Lemma~\ref{lem:inductive} are satisfied (for the same value of $h$) for $B:=A^{(h)}$, $\alpha:=\alpha_h$ and $\alpha':=\alpha_{h-1}=\alpha_{h+1}$, and $\beta:=\beta_{h+1}$. Indeed, we have $\mult(\beta_{h+1},A^{(h)})\ge \widehat{M}(T^{(h)})$ by \Sref[h]{mults1}, and \ref{1hypothesis1} follows from \Sref[h-1]{pendent1}.
  By Lemma~\ref{lem:inductive}, there is a matrix $B_{h+1}$ with eigenvalues $\alpha_{h+1},\beta_{h+1}$ so that $w_{A^{(h)}[Q]}=w_{B_{h+1}}$ whenever $Q$ is a \pendentpath{(h+1)} in $T^{(h)}$. By~\Sref[h-1]{pendent1},  $w_{B_{h+1}(1)}=w_{C_h^{\Lambda_h}}$, so by Proposition~\ref{prop:interlacing}, we have $\Lambda_{h+1}=\Lambda\in {\mathcal B}_{h+1}=\mathcal{B}$ and $w_{B_{h+1}}=w_{C_{h+1}^\Lambda}$, so \Sref[h]{pendent1} holds. 
  This establishes our claim.

Using \Sref[h]{collapse1} for $h\in [H]$, Proposition~\ref{prop:collapsing} shows that $A\in \PHset(A^{(H)},T)$. Note that if $H+1<4$, we can extend $\Lambda_{H+1}\in\mathcal{B}_{H+1}$ to some $\Lambda\in \mathcal{B}$, and if $H+1\ge 4$, then we already have $\Lambda\in \mathcal{B}$. By \Sref[H]{pendent1}, we have $w_{A^{(H)}}=w_{C^\Lambda_{H+1}}$, so $A\in \PHset(C^\Lambda_{H+1},T)$ as required.

  The expression for $\sigma(A)$ was established in Theorem~\ref{thm:arbitrary M}. It follows directly from Corollary~\ref{cor:critical_mult} that the first two inequalities in \eqref{ineqs:high-mults2} are equalities. 
  
\end{proof}

Recall that for the smallest lush hedge $\bftree$ of height $2$, the fifth eigenvalue $\lambda_5$ of the spectrum $\{\lambda_1,\lambda_2^{(2)},\lambda_3^{(4)},\lambda_4^{(2)},\lambda_5\}$ realized in $\S(\bftree)$ must fulfill the linear constraint $\lambda_5=\lambda_2+\lambda_4-\lambda_1$. The next corollary, a side-product of the first part of the proof of Theorem~\ref{thm:main} on page~\pageref{proof:side-product}, shows that four eigenvalues of high multiplicity of any lush hedge imply the same linear constraint on a fifth eigenvalue.

\begin{corollary}
 \label{cor:four-multiplicities}
   Let $T$ be a lush hedge of height $H\geq 2$ and $A\in \cs(T)$ with multiplicity list ${\ordm}$ that contains multiplicities $m_1$, $m_2$, $n_2$ and $n_3$ satisfying:
   \begin{equation*}\left\{\quad
  \begin{aligned}
    m_j &\ge \sum\limits_{\substack{i\in 2\bZ+j,\\ i\ge j}}
    \ell_i(T),\; j=1,2; \\
     n_j&\ge \sum\limits_{\substack{i\in 3\bZ+j,\\i\ge j}}
    \ell_i(T),\; j=2,3\\
   \end{aligned}\right.
  \end{equation*}
   Then $\ordm$ contains a unique multiplicity $m'$ (in addition to $m_1$, $m_2$, $n_2$ and $n_3$) satisfying $m' \geq \ell_3(T)$. Moreover, if the matrix $A$ has eigenvalues: $\alpha_1$ with multiplicity $m_1$, $\alpha_2$ with multiplicity $m_2$, $\beta_2$ with multiplicity $n_2$,  and $\beta_3$ with multiplicity $n_3$, then the unique eigenvalue of~$A$ (besides $\alpha_1$, $\alpha_2$, $\beta_2$, $\beta_3$) with multiplicity $m'$ is $\alpha_2+\beta_2-\beta_3$. 
\end{corollary}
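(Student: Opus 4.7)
The plan is to extract the ``side-product'' argument already embedded in the first portion of the proof of Theorem~\ref{thm:main}, halting just before the step that would require the hypothesis $n_4 < \ell_3(T)$. The four multiplicity hypotheses on $m_1, m_2, n_2, n_3$ are exactly what is needed to perform the first two, and (when $H \ge 3$) three, successive collapses of $A$. First, the $m_j$ bounds let me invoke Lemma~\ref{prop:alpha-vertices} to pin diagonal entries of $A$ at $\alpha_1$ or $\alpha_2$ (outside of $V_1$) and to produce $A^{(1)} \in \cs(T^{(1)})$ with $\alpha_2 \in \sigma(A^{(1)}[Q])$ for every pendent $2$-path $Q$ in $T^{(1)}$. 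I would then apply Lemma~\ref{lem:inductive} at $h = 1$ with $\beta = \beta_2$ to conclude that the weight of $A^{(1)}$ on every pendent $2$-path equals $w_{C_2^{\Lambda_2}}$ where $\Lambda_2 := (\alpha_1, \alpha_2, \beta_2) \in \mathcal B_2$; this enables a second collapse to $A^{(2)} \in \cs(T^{(2)})$, and since $\beta_3 \notin \{\alpha_1, \alpha_2, \beta_2\}$, the multiplicity of $\beta_3$ is preserved: $\mult(\beta_3, A^{(2)}) \ge \widehat M(T^{(2)})$.

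For $H \ge 3$, I next apply Lemma~\ref{lem:inductive} at $h = 2$ with $\beta = \beta_3$ and $\alpha' = \alpha_1$; its hypotheses are provided by Lemma~\ref{prop:alpha-vertices}, the previous step, and distinctness of the four given eigenvalues. The conclusion is that every pendent $3$-path $Q'$ in $T^{(2)}$ carries a common weight $w_{B_3}$ with $\{\alpha_1, \beta_3\} \subseteq \sigma(B_3)$. The trace identity $\tr(B_3) = \alpha_1 + (-\alpha_1 + \alpha_2 + \beta_2) + \alpha_1 = \alpha_1 + \alpha_2 + \beta_2$ then pins the third eigenvalue as $\gamma := \alpha_2 + \beta_2 - \beta_3$. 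By the proof of Proposition~\ref{prop:interlacing} in the $n = 3$ case (which involves no $\beta_4$), $B_3 = C_3^{\Lambda_3}$ for $\Lambda_3 = (\alpha_1, \alpha_2, \beta_2, \beta_3) \in \mathcal B_3$; since $b_2, b_3 > 0$, this matrix is diagonally similar to a symmetric tridiagonal matrix with positive off-diagonals, so its three eigenvalues are simple, and in particular $\gamma \notin \{\alpha_1, \beta_3\}$ (moreover, $\gamma = \alpha_2$ would force $\beta_2 = \beta_3$ and $\gamma = \beta_2$ would force $\alpha_2 = \beta_3$, both contradicting distinctness). A third collapse then yields $A^{(3)}$, and~\eqref{eq:collapsed-spectrum} gives $\ell_3(T)\,\sigma(C_3^{\Lambda_3}) \subseteq \sigma(A)$, so $m' := \mult(\gamma, A) \ge \ell_3(T)$. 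The boundary case $H = 2$ is handled by a short direct computation: $A^{(2)}$ is $3 \times 3$ with $A^{(2)}[\{2,3\}]$ diagonally similar to $C_2^{\Lambda_2}$ and the root diagonal equal to $\alpha_1$, yielding $p_{A^{(2)}}(x) = (x - \alpha_1)[(x - \alpha_2)(x - \beta_2) - w]$ for some $w > 0$, which forces $\sigma(A^{(2)}) = \{\alpha_1, \beta_3, \gamma\}$ and hence $\mult(\gamma, A) \ge 1 = \ell_3(T)$.

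For uniqueness, any eigenvalue $\lambda \notin \{\alpha_1, \alpha_2, \beta_2, \beta_3, \gamma\}$ of $A$ satisfies $\mult(\lambda, A) = \mult(\lambda, A^{(3)})$ by~\eqref{eq:collapsed-spectrum} (for $H = 2$, $A^{(2)}$ plays the role of $A^{(3)}$ and the multiplicity is trivially $0$). Lemma~\ref{lem:M} bounds this by $M(T^{(3)}) = \sum_{i \ge 4,\, i \in 2\bZ} \ell_i(T)$, and Lemma~\ref{lem:decreasing-ells} gives $M(T^{(3)}) \le \sum_{j \ge 4} \ell_j(T) \le \tfrac12 \ell_3(T) < \ell_3(T)$. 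Thus $\gamma$ is the unique eigenvalue of $A$, beyond the four given, whose multiplicity reaches $\ell_3(T)$. The main obstacle is verifying that this truncated argument really does pin down $B_3$ without access to $\beta_4$; this is resolved by the $n = 3$ case of Proposition~\ref{prop:interlacing}, whose conclusion lives entirely in $\mathcal B_3$ rather than $\mathcal B$ and is precisely the weaker statement we need.
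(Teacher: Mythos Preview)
Your proof is correct and follows essentially the same approach as the paper, which explicitly flags this corollary as a side-product of the proof of Theorem~\ref{thm:main}, obtained by halting the collapsing argument just before the $\beta_4$ hypothesis is needed. One small omission in your $H=2$ paragraph: the factorization $p_{A^{(2)}}(x) = (x-\alpha_1)[(x-\alpha_2)(x-\beta_2)-w]$ by itself does not force $\beta_3$ to be a root---you should add that since $\mult(\beta_3,A)\ge n_3\ge \widehat M(T^{(2)})=\ell_3(T)=1$ and $\beta_3\notin\{\alpha_1,\alpha_2,\beta_2\}$, the two collapses preserve $\beta_3$ as an eigenvalue, so $\beta_3\in\sigma(A^{(2)})$ and the remaining root is $\gamma=\alpha_2+\beta_2-\beta_3$.
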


We conclude this section with an application of Theorem \ref{thm:main} to a specific tree and multiplicity list, demonstrating a cubic constraint on the placement of the eigenvalues before showing that $\RS(T,{\ordm})$ need not be convex.

\begin{example}\label{ex:T31-M-construction}
Let $T$ be a lush hedge of height $H=3$; the smallest such tree is shown in Figure~\ref{fig:T31}. Write $\ell_i=\ell_i(T)$ for
$i \in [H + 1]$ and recall that $\ell_4=1$.

\begin{figure}[htb]
  \definecolor{green_}{rgb}{0,.9,.4} %
  \definecolor{orange_}{rgb}{.9,.2,0} %
  \definecolor{c0}{rgb}{0,0.1,.9} %
  \definecolor{c-12}{rgb}{0.9,0.1,0.8}  %
  \definecolor{c12}{rgb}{0,0.1,.9} %
    \begin{center}
     \resizebox{\textwidth}{!}{
     \begin{tikzpicture}
     \foreach  \x in {-12,0,12}{
     \begin{scope}[shift={(\x,-4)}]
 	 \node[draw,circle,fill=white] at (-4,-4) (A) {\phantom{$1$}};
	 \node[draw,circle,fill=white] at (0,-4) (B) {\phantom{$1$}};
 	 \node[draw,circle,fill=white] at (4,-4) (C) {\phantom{$1$}};
 	 \node[draw,circle,fill=white] at (-16/3,-8) (E) {\phantom{$1$}};
 	 \node[draw,circle,fill=green_] at (-8/3,-8) (F) {\phantom{$1$}};
 	 \node[draw,circle,fill=white] at (-4/3,-8) (G) {\phantom{$1$}};
 	 \node[draw,circle,fill=green_] at (4/3,-8) (H) {\phantom{$1$}};
 	 \node[draw,circle,fill=white] at (8/3,-8) (I) {\phantom{$1$}};
 	 \node[draw,circle,fill=green_] at (16/3,-8) (J) {\phantom{$1$}};
       \draw[color=c\x,very thick] (0,0)--(A)--(E);
     \draw[very thick,color=orange_](B)--(G);
     \draw[very thick,color=orange_](C)--(I);
     \draw[dashed] (H)--(B);
     \draw[dashed] (B)--(0,0)-- (C);
     \draw[dashed] (C)--(J);
     \draw[dashed] (A)--(F);
     \end{scope}
   \node[draw,circle,fill=white] at (0,0) (0) {\phantom{$1$}};
     \node[draw,circle,fill=white] at (0,-4) (b) {\phantom{$1$}};
     \node[draw,circle,fill=white] at (-12,-4) (a) {\phantom{$1$}};
     \node[draw,circle,fill=white] at (12,-4) (c) {\phantom{$1$}};
      \draw[color=c-12,very thick] (a)--(0);
       \draw[dashed](c)--(0)--(b);
   }
    \end{tikzpicture}}
     \caption{The smallest lush hedge of height $H=3$, having $\ell_1=9$, $\ell_2=6$, $\ell_3=2$ and $\ell_4=1$. The solid edges show a spanning subgraph $9{\color{green_!70!black}P_1}\cup 6{\color{orange_!70!black}P_2}\cup 2{\color{c0!70!black}P_3}\cup 1{\color{c-12!70!black}P_4}$ that could be used in the path-to-hedge constriction.%
     }
     \label{fig:T31}
    \end{center}
\end{figure}

The (unordered) multiplicity list \[{\ordm}=\{\ell_1+\ell_3,\ell_2+\ell_4,\ell_2,\ell_3,\ell_3,1,1,1\}\] is a critical multiplicity list for $T$, and our results show that it is realizable in $\S(T)$. 
Indeed, for $\Lambda \in \mathcal{B}$ there exists a matrix in $\S(T)$ cospectral with $C_4^\Lambda$, hence there exists a symmetric matrix $A\in \PHset(C_4^\Lambda,T)$. We know that $\spec(C_i^\Lambda)\cap \Lambda=\{\alpha_i,\beta_i\}$ for $i=2,3,4$, and $\spec(C_1^\Lambda)=\{\alpha_1\}$. Moreover, by Remark~\ref{rk:rn}, $\spec(C_3^\Lambda)=\{\alpha_1,\beta_3,\delta_1\}$ where $\delta_3$ is the root of the linear function $r_3(x)$; and $\spec(C_4^\Lambda)=\{\alpha_2,\beta_4,\delta_2,\delta_3\}$ where $\delta_2,\delta_3$ are the roots of the quadratic $r_4(x)$. It follows from Lemma~\ref{lem:steps} that $\alpha_1,\alpha_2,\beta_2,\beta_3,\beta_4,\delta_1,\delta_2,\delta_3$ are distinct.
Since\[\spec(A)=\bigcup_{i=1}^4\ell_i\sigma(C_i^\Lambda)=\{\alpha_1^{(\ell_1+\ell_3)},\alpha_2^{(\ell_2+\ell_4)},\beta_2^{(\ell_2)},\beta_3^{(\ell_3)},\delta_1^{(\ell_3)},\beta_4,\delta_2,\delta_3\}\]
it follows that the unordered multiplicity list of~$A$ is precisely $\ordm$.
Moreover, since ${\ordm}$ is critical, by Theorem~\ref{thm:main}, every matrix $A\in \S(T)$ with unordered multiplicity list $\ordm$ lies in $\PHset(C_4^\Lambda,T)$ as described above,  for some $\Lambda\in\mathcal B$. 

Now suppose that $A$ is any matrix in $\S(T)$ so that $A$ has unordered multiplicity list $\ordm$.  We will show that there is necessarily a labelling $\lambda_1,\dots,\lambda_8$ of the distinct eigenvalues of $\ordm$ so that the following constraints are satisfied:
\begin{gather*}
    \lambda_3+\lambda_6=\lambda_2+\lambda_7,\\
    \lambda_3 + \lambda_5 + \lambda_6= \lambda_1 + \lambda_4 + \lambda_8,\\
     (\lambda_2-\lambda_3)(\lambda_5-\lambda_3)(\lambda_7-\lambda_3) = (\lambda_1-\lambda_3)(\lambda_4-\lambda_3)(\lambda_8-\lambda_3).
\end{gather*}
Observe that taking a linear combination of the first two constraints gives
  $$\lambda_5+3\lambda_3+3\lambda_6=2\lambda_2+2\lambda_7+\lambda_1+\lambda_4+\lambda_8,$$
which was already observed by Ferrero et~al.~\cite[Theorem~4.3]{FFHHLMNS}. 
  
Firstly, since ${\ordm}$ is critical, by Theorem~\ref{thm:main}, every matrix $A\in \S(T)$ with unordered multiplicity list $\ordm$ lies in $\PHset(C_4^\Lambda,T)$ as described above,  for some $\Lambda\in\mathcal B$. 
We have $\tr(C_2^\Lambda)=\tr(C_3^\Lambda)-\alpha_1$, so
\[\alpha_2+\beta_2=\beta_3+\delta_1.\]
Similarly, $\tr(C_3^\Lambda)=\tr(C_4^\Lambda)-\alpha_2$, so
\[ \alpha_1+\beta_3+\delta_1=\beta_4+\delta_2+\delta_3.\]
Recall that $p_4(x)=(x-\alpha_2)p_3(x)-b_4p_2(x)$, so
\begin{align*}
    \frac{p_4(x)}{x-\alpha_2}=(x-\beta_4)(x-\delta_2)(x-\delta_3)
    =p_3(x)-b_4\frac{p_2(x)}{x-\alpha_2}.
\end{align*}
Since $p_2(\beta_2)=0$ and $p_3(x)=(x-\alpha_1)(x-\beta_3)(x-\delta_1)$, this yields the nonlinear identity
\begin{equation}\label{eq:nonlinear}
(\beta_2-\beta_4)(\beta_2-\delta_2)(\beta_2-\delta_3)=(\beta_2-\alpha_1)(\beta_2-\beta_3)(\beta_2-\delta_1).
\end{equation}
So we recover the constraints above by labelling the eigenvalues of~$A$ as $(\lambda_1,\lambda_2,\dots,\lambda_8)=(\beta_4,\beta_3,\beta_2,\delta_2,\alpha_1,\alpha_2,\delta_1,\delta_3)$.

The unordered multiplicity list ${\ordm}$ is realized by at least one ordered multiplicity list ${\ordm}^\prime$ of length $k = 8$, for which $\RS(T, \ordm^\prime)$ will be a non-empty subset of the interior of the $6$-dimensional standard simplex $\simplex_6$.
It follows from the above discussion, firstly that all of $\RS(T, \ordm^\prime)$ lies within
a $4$-dimensional affine subspace of $\simplex_6$, and secondly
that $\RS(T, \ordm^\prime)$ lives on a $3$-dimensional variety within that $4$-dimensional subspace
that is carved out by a cubic constraint. 
\end{example}

To explicitly illustrate that $\RS(T, \ordm)$ is not convex, we continue the example above by fixing some of the eigenvalues of matrices in $\PHset(C_4^\Lambda,T)$. 

\begin{example}\label{ex:non-convexity}
 Consider the matrix:
$$C_4^{\Lambda}=\left(
\begin{array}{cccc}
 \frac{28-30 x}{9(4-3 x) } & \frac{4x (1-x) }{3(4-3 x)} & 0 & 0 \\
 1 & x  & \frac{2 (8-9 x)}{27 (4-3 x)} & 0 \\
 0 & 1 & \frac{27 x^2-75 x+40}{9(4-3 x) } & \frac{(3 x-1) \left(27 x^2-66 x+28\right)}{27
   (4-3 x)} \\
 0 & 0 & 1 & x \\
\end{array}
\right)\in \cs(P_4)$$
for $\tfrac13<x<\frac{1}{9} \left(11-\sqrt{37}\right)$, where
$$\Lambda=(\alpha_1,\alpha_2,\beta_2,\beta_3,\beta_4)=\left(x,\frac{28-30 x}{9(4-3 x) },\frac{1}{3},\frac{1}{9},\frac{-27 x^2+24 x+4}{9(4-3 x) }\right) \in \mathcal{B}.$$
Here $\Lambda$ is chosen so that $\beta_2=\frac{1}{3}$,  $\beta_3=\frac{1}{9}$ and $C_4^{\Lambda}$ has eigenvalues $0$ and $1$, 
which implies that $\alpha_2=\frac{28-30 x}{9(4-3 x) }$ and $\beta_4=\frac{-27 x^2+24 x+4}{9(4-3 x) }$, i.e. $\delta_2=0$ and $\delta_3=1$. With this choice the order of the eigenvalues of  $A\in \PHset(C_4^{\Lambda},T)$ is as follows:
$$\delta_2<\beta_3<\beta_2<\beta_4<\alpha_1<\alpha_2<\delta_1<\delta_3.$$

Let $\Lambda_1$ be obtained from $\Lambda$ by setting $x=2/5$ and  $\Lambda_2$ be obtained from $\Lambda$ by setting $x=1/2$.  For $A\in \PHset(C_4^{\Lambda_1},T)$ we have
$$\sigma_1=\sigma(A)=\left\{0,\frac{1}{9}^{(\ell_3)},\frac{1}{3}^{(\ell_2)},\frac{116}{315},\frac{2}{5}^{(\ell_1+\ell_3)},\frac{40}{63}^{(\ell_2+\ell_4)},\frac{6}{7}^{(\ell_3)},1\right\},$$
and for $A\in \PHset(C_4^{\Lambda_2},T)$ we have
$$\sigma_2=\sigma(A)=\left\{0,\frac{1}{9}^{(\ell_3)},\frac{1}{3}^{(\ell_2)},\frac{37}{90},\frac{1}{2}^{(\ell_1+\ell_3)},\frac{26}{45}^{(\ell_2+\ell_4)},\frac{4}{5}^{(\ell_3)},1\right\}.$$
For the ordered multiplicity list ${\ordm_0}:=(1,\ell_3,\ell_2,\ell_1+\ell_3,\ell_2+\ell_4,\ell_3,1)$ we now have 
\begin{align*}
{\bf p}_1&=\left(\frac{1}{9},\frac{2}{9},\frac{11}{315},\frac{2}{63},\frac{74}{315},\frac{2}{9},\frac{1}{7}\right) \in \RS(T, \ordm_0), \\{\bf p}_2&= \left(\frac{1}{9},\frac{2}{9},\frac{7}{90},\frac{4}{45},\frac{7}{90},\frac{2}{9},\frac{1}{5}\right)\in \RS(T, \ordm_0).
\end{align*} 
It is straightforward to check that \eqref{eq:nonlinear} is satisfied for $\sigma_1$ and $\sigma_2$, but not for $t \sigma_1+(1-t)\sigma_2$ for any $t\in (0,1)$. Equivalently, $t{\bf p}_1+(1-t){\bf p}_2 \not\in\RS(T, \ordm_0)$ for all $t \in (0,1)$. 
\end{example}

\section{Resolution of two conjectures}\label{sec:S(T)}

Our results allow us to supply counterexamples to the conjectures listed below.
(While the original statements were formulated for $\mathcal{H}(T)$, the set of Hermitian matrices with graph $T$, rather than $\S(T)$, this is easily seen to be equivalent to the statements below.)
\begin{conjectures}\cite{MR3557827}\label{conj:split+01}
  Let $T$ be a tree and suppose that ${\bf m}=\{m_1,m_2,...,m_k\}$ is an unordered multiplicity list which is realizable by a matrix in $\S(T)$. 
  \begin{enumerate}
      \item\label{conj:split} (Splitting conjecture, \cite[Conjecture~6]{MR3557827}). For any $j\in [k]$ so that $m_j\geq 2$,  the unordered multiplicity list \[({\bf m}\setminus\{m_j\})\cup \{m_j-1,1\}=\{m_1,...,m_j-1,...,m_k,1\}\] is also realizable in $\S(T)$.
      \item\label{conj:01} (Zero-one conjecture, \cite[Conjecture~10]{MR3557827}). The unordered multiplicity list ${\ordm}$ is realizable by a matrix in $\S(T)$ with every off-diagonal entry equal to $0$ or $1$.
  \end{enumerate}
\end{conjectures}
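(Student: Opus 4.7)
The plan is to use Theorem~\ref{thm:main} twice: first to show that splitting a well-chosen multiplicity in a critical list yields a non-realizable list (disproving Conjecture~\ref{conj:split}), and second to show that the rigidity of the path-to-hedge construction is incompatible with $\{0,1\}$ off-diagonal entries on lush hedges with non-uniform branching (disproving Conjecture~\ref{conj:01}). The common engine is that any $A\in\S(T)\subseteq\cs(T)$ realizing a critical multiplicity list on a lush hedge $T$ must lie in $\PHset(C_{H+1}^{\Lambda},T)$ for some $\Lambda\in\mathcal{B}$, so by Theorem~\ref{thm:arbitrary M} and Remark~\ref{rk:rn} the full spectrum of $A$---including the multiplicities of the non-distinguished eigenvalues $\delta_i$---is completely determined by $\Lambda$.

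For Conjecture~\ref{conj:split}, I would take the height-$3$ lush hedge $T$ of Example~\ref{ex:T31-M-construction}, for which $\ell_3(T)\ge 2\ell_4(T)=2$ by Lemma~\ref{lem:decreasing-ells}, and consider its realizable critical multiplicity list
\[\ordm=\{\ell_1+\ell_3,\ell_2+\ell_4,\ell_2,\ell_3,\ell_3,1,1,1\}.\]
Splitting one copy of $\ell_3$ produces the unordered list
\[\ordm'=\{\ell_1+\ell_3,\ell_2+\ell_4,\ell_2,\ell_3,\ell_3-1,1,1,1,1\}.\]
Since $\ordm'$ still contains the five multiplicities required by Definition~\ref{def:critical-multiplicity2} (with $n_4=1<\ell_3$), $\ordm'$ is itself a critical multiplicity list for $T$. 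Hence any $A\in\S(T)$ realizing $\ordm'$ would lie in $\PHset(C_4^\Lambda,T)$ for some $\Lambda\in\mathcal{B}$, and by Theorem~\ref{thm:arbitrary M} its spectrum consists of the eight distinct eigenvalues $\alpha_1,\alpha_2,\beta_2,\beta_3,\beta_4,\delta_1,\delta_2,\delta_3$ with multiplicities $\ell_1+\ell_3,\ell_2+\ell_4,\ell_2,\ell_3,1,\ell_3,1,1$. The unordered multiplicity list of such an $A$ would therefore be $\ordm$, with only $8$ distinct multiplicities, contradicting the $9$ distinct multiplicities in $\ordm'$. Thus $\ordm'$ is not realizable, refuting Conjecture~\ref{conj:split}.

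For Conjecture~\ref{conj:01}, I would pick a lush hedge $T$ of height $H\ge 2$ in which two non-leaf vertices at the same height have different numbers of children; such trees exist because lushness only imposes a lower bound on the number of children of each non-leaf vertex. Let $\ordm$ be any critical multiplicity list for $T$, which is realizable in $\S(T)$ via the path-to-hedge construction. Suppose for contradiction that some $A\in\S(T)$ realizing $\ordm$ has all off-diagonal entries in $\{0,1\}$, so $w_A(v,u)=a_{v,u}a_{u,v}=1$ for every edge $\{u,v\}$ of $T$. By Theorem~\ref{thm:main}, $A\in\PHset(C_{H+1}^{\Lambda},T)$ for some $\Lambda\in\mathcal{B}$, so by the defining property of $\PHset$, every non-leaf vertex $v$ satisfies
\[\sum_{u\in\children(v)}w_A(v,u)=b_{\h(v)+1},\]
a scalar that depends only on $\h(v)$ (and $\Lambda$). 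The left-hand side equals the number of children of~$v$, so any two non-leaf vertices of $T$ at the same height would be forced to have the same number of children, contradicting the choice of $T$.

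The main technical hurdle is verifying that $\ordm'$ remains a critical multiplicity list after the split; this reduces to the strict inequality $n_4<\ell_3(T)$, which persists because $\ell_3(T)\ge 2$ by Lemma~\ref{lem:decreasing-ells}. Beyond this bookkeeping, both arguments are direct consequences of Theorem~\ref{thm:main} together with the rigidity of the path-to-hedge construction as recorded in Theorem~\ref{thm:arbitrary M} and Remark~\ref{rk:rn}.
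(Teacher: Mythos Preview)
Your proposal is correct and takes essentially the same approach as the paper. For the splitting conjecture you split a non-special multiplicity (one of the two copies of $\ell_3$) and argue directly that the resulting nine-part list is still critical yet cannot arise from $\PHset(C_4^\Lambda,T)$ since any such matrix has exactly eight distinct eigenvalues; the paper does the same in spirit but phrases it as a maximality-of-length argument valid for all lush hedges of height $H\ge 3$. For the zero-one conjecture your argument is identical to the paper's (the paper specializes to $H=2$), using that the path-to-hedge constraint forces $\sum_{u\in\children(v)}w_A(v,u)$ to depend only on $\h(v)$, whence all children counts at a given height must agree when the off-diagonal entries are $0/1$.
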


Conjecture~\ref{conj:split+01}(\ref{conj:split}) has been proved for linear trees and trees on at most 12 vertices,~\cite{MR3557827,MR4357320}. Moreover, the bifurcation lemma of  Fallat, Hall, Lin and Shader~\cite{FALLAT202270} verifies that refinements of realizable multiplicity lists are also realizable under the additional assumption that the corresponding matrix has the Strong Multiplicity Property. We note that it is not possible to realize a \highMultiplicityList{} for a tree by a matrix with certain strong properties, see~\cite[Corollary~29]{MR3665573}, and the matrices constructed by our process do not have the Strong Multiplicity Property.  In the next example we show that any lush hedge $T$ of height $H\ge3$ is a counterexample to the splitting conjecture.

\begin{example}\label{counterexample1}
    (Counterexample to the splitting conjecture of~\cite{MR3557827}).
 Let $T$ be a lush hedge of height $H\geq 3$ and note that $\ell_3(T)\ge 2$ in this case. Let
  \[m_1=M(T^{(0)}),m_2=M(T^{(1)}),n_2=\widehat{M}(T^{(1)}),n_3=\widehat{M}(T^{(2)}), n_4=\widehat{M}(T^{(3)}).\] By Theorem~\ref{thm:main}, there is a  multiplicity list $\ordm$ so that $\{m_1,m_2,n_2,n_3,n_4\}\subseteq\ordm$ and $\ordm$ is realizable in $\S(T)$. Choose such an \[\ordm=\{m_1,m_2,n_2,n_3,n_4\}\cup\{m_6,m_7,\dots,m_k\}\] of maximal total length, $k$, realized by $A\in \S(T)$. By Theorem~\ref{thm:main}, we have $A\in \PHset(C_{H+1}^\Lambda,T)$ for some $\Lambda=(\alpha_1,\alpha_2,\beta_2,\beta_3,\beta_4)\in \mathcal B$. By %
  direct calculation,
  $C_3^\Lambda$ has eigenvalue $\lambda:=\alpha_2+\beta_2-\beta_3$ in addition to $\{\alpha_1,\beta_3\}$, and $\mult(\lambda,A)\ge \ell_3(T)\ge 2$. Moreover, $\lambda\not\in \{\alpha_1,\alpha_2,\beta_2,\beta_3,\beta_4\}$: since $\lambda\in \spec(C_3^\Lambda)\setminus\{\alpha_1,\beta_3\}$, we have $\lambda\not\in \{\alpha_1,\beta_3\}\cup \spec(C_2^\Lambda)\cup \spec(C_4^\Lambda)\supseteq\{\alpha_1,\alpha_2,\beta_2,\beta_3,\beta_4\}$, by Lemma~\ref{lem:steps}. Hence, we have $m_j=\mult(\lambda,A)\ge2$ for some $j$ with $6\le j\le k$. The list
  \[ \ordm'=(\ordm \setminus \{m_j\})\cup \{m_j-1,1\}\] has length $k+1$ and contains $\{m_1,m_2,m_3,m_4,m_5\}$, so it is not realizable in $\S(T)$, by our choice of $k$. Hence, the preceding conjecture does not hold for~$T$.
  
  For a specific counterexample, consider the height~$3$ lush hedge $T$ on $31$ vertices in Figure~\ref{fig:T31}. In the notation above, we have $(m_1,m_2,n_2,n_3,n_4)=(11,7,6,2,1)$. The unordered multiplicity list \[{\bf m}=\{11,7,6,2,2,1,1,1\}\] is realizable in $\S(T)$, but \[{\bf m'}=\{11,7,6,2,1,1,1,1,1\}\] is not.
\end{example}

\begin{example}\label{counterexample2}
(Counterexample to the zero-one conjecture of~\cite{MR3557827}).
 Let $T$ be any lush hedge of height~$2$ which has two vertices $x,y$ of height~$1$ so that $x$ and $y$ have different numbers of children. (The smallest example has~$11$ vertices and is obtained by appending one extra leaf to the Barioli-Fallat tree~$\bftree$, and is shown in Figure~\ref{fig:BFtree-plus-leaf}).
  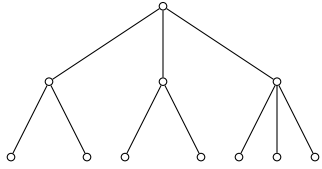
\begin{figure}[htb]
    \centering{%
    \begin{tikzpicture}[scale=0.5]
   \node[draw,circle,fill=white] at (0,0) (D) {};
    \node[draw,circle,fill=white] at (-3,-2) (A) {};
    \node[draw,circle,fill=white] at (-4,-4) (E) {};
    \node[draw,circle,fill=white] at (0,-2) (B) {};
    \node[draw,circle,fill=white] at (-1,-4) (G) {};
    \node[draw,circle,fill=white] at (3,-2) (C) {};
    \node[draw,circle,fill=white] at (2,-4) (I) {};
    \node[draw,circle,fill=white] at (-2,-4) (F) {};
    \node[draw,circle,fill=white] at (1,-4) (H) {};
    \node[draw,circle,fill=white] at (3,-4) (J) {};
    \node[draw,circle,fill=white] at (4,-4) (K) {};
        \draw(D)--(A)--(E);
        \draw(B)--(G); 
        \draw (K)--(C)--(I);
        \draw (H)--(B)--(D)--(C)--(J);
        \draw (A)--(F);
    \end{tikzpicture}}
    \caption{The smallest tree considered in Example~\ref{counterexample2}.}\label{fig:BFtree-plus-leaf}
\end{figure}
Writing $\ell_i=\ell_i(T)$, consider the multiplicity list $\ordm:=\{\ell_1+\ell_3,\ell_2,\ell_2,1,1\}$. It is the unique critical multiplicity list for $T$ and by %
Theorem~\ref{thm:section-4-highlights}, $\ordm$ is realizable in $\S(T)$. Suppose for a contradiction that $A\in \S(T)$ has every off-diagonal entry equal to~$1$, and $A$ has unordered multiplicity list $\ordm$. By Theorem~\ref{thm:main}, $A\in \PHset(C,T)$ for some $3\times 3$ matrix $C=(c_{i,j})\in \S(P_3)$, which implies that for any vertex $z$ in~$T$ with height~$1$, we have \[c_{2,3}^2=\sum_{z'\in \children(z)}a_{z,z'}^2=|\children(z)|.\]
 Since $x$ and $y$ have a different number of children, this is a contradiction.
\end{example}

\section{A spectacular failure of spectral arbitrariness}
 \label{sec:rigid}
 
 We have in Example \ref{ex:T31-M-construction} a case where the dimension of $\RS(T, \ordm)$ is merely~$3$, whereas the open simplex $\simplex_6$ in which it lies, and which it would completely fill if $\ordm$ were spectrally arbitrary for $T$, has dimension $6$.
 In fact, more generally, any time that $T$ is a lush hedge and $\ordm$ is a \highMultiplicityList{}
 that can be realized within $\S(T)$, Theorem \ref{thm:main} implies that every realization of $\ordm$ depends on only $5$ choices, or only $3$ choices up to shifting and scaling.
 It follows that in these cases $\RS(T, \ordm)$ has dimension at most $3$ (and, incidentally, an increasing number of non-linear constraints of increasing degree) no matter how large might be the dimension of the open simplex $\simplexk$ in which it lies.
 Considering that spectral arbitrariness is equivalent to $\RS(T, \ordm) = \simplexk$, this already represents, for any realizable \highMultiplicityList{}, a fairly spectacular failure of spectral arbitrariness.
 
But we can do even worse.
We now show that a matrix with carefully chosen eigenvalues may give rise to a multiplicity list for which the relative spacing of eigenvalues that can be realized in $\mathcal{S}(T)$ is \emph{completely} rigid.
\begin{theorem}
  \label{thm:rigid}
  Let $T$ be a lush hedge of height $H \ge 8$.
  \begin{enumerate}
      \item There exist both a matrix $A \in \mathcal{S}(T)$ and an ordered multiplicity list $\ordmrigid(T) = \ordm(A)$ such that $\ordmrigid(T)$ contains, in addition to the five multiplicities required for a \highMultiplicityList{}, multiplicities
      \begin{itemize}
          \item $m_{3, 7} \ge \ell_3 + \ell_7$,
          \item $m_{4, 8} \ge \ell_4 + \ell_8$, and
          \item $m_{4, 9} \ge \ell_4 + \ell_9$.
      \end{itemize}
      \item Given any matrix $B \in \mathcal{S}(T)$ whose ordered multiplicity list $\ordm(B)$ is a \highMultiplicityList{} and contains three additional multiplicities at least as large, respectively, as $\ell_3+\ell_7$, $\ell_4+\ell_8$, and $\ell_4+\ell_9$, either $\ordm(B)$ or $\ordm(-B)$ is identical to $\ordmrigid(T)$.
      \item Given any matrix $B \in \mathcal{S}(T)$ with
      $\ordm(B) = \ordmrigid(T)$, the spectrum of $B$ is identical to the spectrum of~$A$ up to shifting and scaling, i.e.,
    the moduli space $\RS(T, \ordmrigid(T))$ consists of a single point.
  \end{enumerate}
 \end{theorem}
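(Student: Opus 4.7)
The plan is to leverage Theorem~\ref{thm:main} to reduce every assertion to an algebraic question about $\Lambda \in \mathcal{B}$. By that theorem, any matrix $B \in \S(T)$ whose ordered multiplicity list is a \highMultiplicityList{} lies in $\PHset(C_{H+1}^{\Lambda'}, T)$ for some $\Lambda' \in \mathcal{B}$, and its spectrum is determined by $\Lambda'$ via $\spec(B) = \bigcup_{i=1}^{H+1} \ell_i(T)\,\spec(C_i^{\Lambda'})$. Each extra high multiplicity therefore forces an eigenvalue coincidence between a pair of trailing submatrices: a multiplicity of at least $\ell_3 + \ell_7$ forces a common eigenvalue between $C_3^{\Lambda'}$ and $C_7^{\Lambda'}$, and similarly for $(C_4^{\Lambda'}, C_8^{\Lambda'})$ and $(C_4^{\Lambda'}, C_9^{\Lambda'})$. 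The hypothesis $H \ge 8$ is exactly what makes $C_9^{\Lambda'}$ available. Each such coincidence is a single polynomial condition on the three parameters of $\Lambda'$ that remain after normalizing for shifting and scaling.

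For part~(1), I would write out the three coincidence conditions explicitly using the three-term recurrence $p_i(x) = (x - a_i)p_{i-1}(x) - b_i p_{i-2}(x)$ for the characteristic polynomial of $C_i^\Lambda$, with the periodic $a_i, b_i$ from Proposition~\ref{prop:interlacing}. Since $\spec(C_3^\Lambda) \setminus \Lambda = \{\gamma\}$ with $\gamma := \alpha_2 + \beta_2 - \beta_3$, the first condition is simply $p_7(\gamma) = 0$; the other two are analogous equations forcing one of the two new eigenvalues of $C_4^\Lambda$ to recur in $C_8^\Lambda$ and in $C_9^\Lambda$. I would then exhibit a solution $\Lambda \in \mathcal{B}$, either by direct numerical construction inside one of the twelve regions of~\eqref{ineq:regions} or by a continuity argument inside a fixed region, leveraging the fact that three independent polynomial equations on a three-dimensional parameter space generically have a zero-dimensional, nonempty real intersection. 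Applying Theorem~\ref{thm:arbitrary M} to any $A \in \PHset(C_{H+1}^\Lambda, T)$ then yields the matrix claimed in the theorem, and $\ordmrigid(T) := \ordm(A)$ is defined.

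For part~(2), apply Theorem~\ref{thm:main} to $B$ to obtain $\Lambda' \in \mathcal{B}$ with $B \in \PHset(C_{H+1}^{\Lambda'}, T)$; the three coincidences imposed by $\ordm(B)$ give the same polynomial system as in part~(1). The uniqueness claim becomes that, modulo shifting and scaling, the only real solutions of this system inside $\mathcal{B}$ are the $\Lambda$ found in~(1) and its negation $-\Lambda$ (the sign-flip interchanging the first six regions of~\eqref{ineq:regions} with the last six, and corresponding to the passage $B \mapsto -B$). I would establish this by normalizing two of the five parameters (one each for shifting and scaling) and analysing the resulting zero-dimensional polynomial system via resultants or elimination, exploiting the $\Lambda \leftrightarrow -\Lambda$ symmetry to reduce the case analysis to a single half of~\eqref{ineq:regions}. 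Part~(3) is then immediate: the stronger hypothesis $\ordm(B) = \ordmrigid(T)$ fixes the ordering of eigenvalues, ruling out the negation ambiguity from~(2), so $\Lambda' = c\Lambda + d(1,\dots,1)$ for some $c>0$ and $d \in \bR$, and Theorem~\ref{thm:main} then gives $\spec(B) = c\,\spec(A) + d$, collapsing $\RS(T, \ordmrigid(T))$ to a single point.

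The principal obstacle will be the algebraic uniqueness in part~(2): excluding spurious real solutions of the coincidence system inside the semialgebraic set $\mathcal{B}$. Generic B\'{e}zout-type bounds on the solution count are likely too coarse, so the proof will need to exploit the special structure of the sequence $C_i^\Lambda$, perhaps by using the eventual periodicity of the diagonal and superdiagonal to diagonalize an associated $2 \times 2$ transfer matrix and obtain closed-form expressions for $p_i(\gamma)$ as functions of $i$. Such closed forms, combined with the orientation symmetry built into~\eqref{ineq:regions}, should make the system tractable and permit a direct count of its real solutions inside each valid region.
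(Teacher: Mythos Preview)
Your overall architecture is right: reduce via Theorem~\ref{thm:main} to a question about $\Lambda\in\mathcal{B}$, impose three polynomial coincidence conditions, and then argue existence and uniqueness for that system modulo shifting and scaling. Part~(3) follows from~(2) exactly as you say.

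The genuine gap is the sentence ``a multiplicity of at least $\ell_3+\ell_7$ forces a common eigenvalue between $C_3^{\Lambda'}$ and $C_7^{\Lambda'}$, and similarly for $(C_4^{\Lambda'},C_8^{\Lambda'})$ and $(C_4^{\Lambda'},C_9^{\Lambda'})$.'' This is not justified, and it is where the real work lies. A non-special multiplicity of size $\ge \ell_3+\ell_7$ only tells you that some eigenvalue $\lambda\notin\Lambda$ lies in several of the $\sigma(C_i^{\Lambda'})$ with $\sum\ell_i\ge\ell_3+\ell_7$; it does not single out the pair $(3,7)$. A priori a $(3,5)$ or $(3,6)$ coincidence would more than suffice, and for the two smaller targets a $(4,6)$ or $(4,7)$ coincidence, or a \emph{double} $(4,8)$ coincidence (both roots of $r_4$ recurring in $r_8$), would account for both $m_{4,8}$ and $m_{4,9}$ without ever touching level~$9$. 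Until those alternatives are excluded, you do not know that your three polynomial conditions are $r_{3,7}=r_{4,8}=r_{4,9}=0$, and hence your uniqueness argument in~(2) is solving a system that might not be the one actually forced by the hypotheses.

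The paper closes this gap in two steps. First, Lemma~\ref{lem:decreasing-ells} together with strict interlacing and the degrees of $r_3,r_4$ pins down that the three non-special eigenvalues in question must occupy level~$3$ (once) and level~$4$ (twice), and that each must recur at some further level no later than $7$, $8$, $9$ respectively. Second, and crucially, a direct computation shows that the resultants $r_{3,5}$, $r_{3,6}$, $r_{4,6}$, $r_{4,7}$ factor entirely into linear forms that are nonzero for every $\Lambda\in\mathcal{B}$ (they are exactly the superdiagonal factors of $C_n^\Lambda$), so those earlier coincidences are \emph{impossible}. The double-root possibility for $(4,8)$ is ruled out by evaluating $r_8$ at the companion matrix of $r_4$ and observing that one entry is again a product of such trivially nonzero forms. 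Only after all of this can one conclude that the constraints are precisely $r_{3,7}=r_{4,8}=r_{4,9}=0$; the paper then normalizes $\beta_2=-1$, $\beta_4=1$ and solves the remaining system by computer algebra, finding a unique real solution in $\mathcal{B}$ lying in a degree-$6$ number field. Your proposed transfer-matrix approach to closed forms for $p_i$ is a reasonable idea for this last step, but it does not address the missing reduction.
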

 
\begin{proof}
If there is a matrix $A \in \mathcal{S}(T)$ whose ordered multiplicity list $\ordm(A)$ is a \highMultiplicityList{} which contains three additional multiplicities $m_{3,7}$, $m_{4,8}$, and $m_{4,9}$ at least as large as $\ell_3+\ell_7$, $\ell_4+\ell_8$, and $\ell_4+\ell_9$, respectively, then by Theorem \ref{thm:main} the spectrum of~$A$ is determined entirely by some choice $\Lambda\in \mathcal{B}$ of five special eigenvalues. It also follows that each individual multiplicity in $\ordm(A)$ is a sum of terms (possibly a single term) $\ell_i=\ell_i(T)$ for distinct $i$, and additionally that any particular term $\ell_i$ occurs as a summand in precisely $i$ of the multiplicities.

The five special multiplicities of a realizable \highMultiplicityList{} account for the single $\ell_1$ summand, for both of the $\ell_2$ summands, and for exactly two of the $i$ total $\ell_i$ summands for each $i \ge 3$.
 None of the remaining non-special multiplicities can include both an $\ell_3$ summand and an $\ell_4$ summand, by strict eigenvalue interlacing for paths. By Lemma \ref{lem:decreasing-ells}, it now follows that the largest non-special multiplicity (equal to $m_{3,7}$) must have an $\ell_3$ summand, and that the second-largest and third-largest non-special multiplicities must each have an $\ell_4$ summand. By the same lemma, each of them must also have at least one other summand $\ell_b$ no later in the decreasing list, respectively, than $\ell_7$, $\ell_8$, or $\ell_9$.
Hence, for the multiplicities $m_{3,7}$, $m_{4,8}$, and $m_{4,9}$, there must exist corresponding eigenvalues $\lambda_{3,7},\lambda_{4,8},\lambda_{4,9} \not\in \Lambda$ that appear at more than one level. 

  Recall the polynomials $r_k(x)$ for $k > 2$ defined in Remark \ref{rk:rn} which vanish at exactly those eigenvalues of level $k$ that are not a part of~$\Lambda$. 
The possibility of such a non-special eigenvalue appearing at two different levels $a$ and $b$ is precisely captured by the vanishing of the resultant $r_{a,b}$ of $r_a(x)$ and $r_b(x)$. Note that by strict interlacing $r_{a,a+1}$ cannot vanish.
By our reasoning above, to achieve the non-special multiplicity $m_{3,7}$ it is therefore necessary for $r_{3,b}$  to vanish for some $b\in \{5,6,7\}$, and then to achieve $m_{4,8}$ in addition, $r_{4,c}$ must vanish for some $c\in \{6,7,8\}$. %

 Let us call a linear combination of the variables in $\Lambda$ \emph{trivially nonzero} if it occurs in the factorization of an entry on the superdiagonal of $C_n^\Lambda$, for some $n$. In addition to $\binom{5}{2} = 10$ terms such as $(\alpha_1 - \alpha_2)$ that are nonzero precisely because the five eigenvalues $\Lambda$ are distinct, the term $(\alpha_2 + \beta_2 - \beta_3 - \beta_4)$ is also trivially nonzero, ensuring $\alpha_2+\beta_2\ne \beta_3+\beta_4$.  We adopt this terminology because no such expression can evaluate to zero when $\Lambda\in\mathcal{B}$.

  Direct calculation shows that each of the following resultants factors entirely into trivially nonzero terms:
  $r_{3,5}$, $r_{3,6}$, 
  $r_{4, 6}$, and $r_{4, 7}$.
 Each decomposition into trivially nonzero factors establishes that the corresponding eigenvalue level pairs cannot have any coincidences outside of the five special eigenvalues of~$\Lambda$.
 We conclude that $r_{3, 7}$ and $r_{4, 8}$ must both vanish in order, respectively, for the multiplicities $m_{3, 7}$ and $m_{4, 8}$ to be achieved.
 
 We would like to argue that similarly $r_{4, 9}$ must vanish in order for the multiplicity $m_{4, 9}$ to be achieved, but on the face of it
 this could also be achieved if $r_4(x)$ and $r_8(x)$ had two roots in common.
This possibility can be excluded by considering a generalization of the resultant that detects multiple shared roots between polynomials. This generalization can be obtained by taking the companion matrix of one polynomial and plugging it in to the other polynomial, as explained for example in \cite{Parker35}.
 For the specific case we need, let $R_4$ be the $2 \times 2$ companion matrix of the quadratic $r_4(x)$, specifically the companion matrix that places the negatives of the coefficients along the first row of the matrix.
 Suppose that the values of the five special eigenvalues $\Lambda$ are such that $r_4(x)$ and $r_8(x)$ share two roots; in that case $r_4(x)$, which is the characteristic polynomial of $R_4$, will be a factor of $r_8(x)$, and $r_8(R_4)$ will evaluate to the $2 \times 2$ zero matrix.
 We observe, however, that the $(2, 1)$ entry of $r_8(R_4)$ is given by
 \[\left(\beta _3-\beta _4\right) \left(\alpha _1-\beta _2\right) \left(\beta _2-\alpha _2\right) \left(\alpha _1-\beta _4\right) \left(\alpha _2+\beta _2-\beta _3-\beta _4\right),\]
 a product of trivially nonzero terms, and so $r_4(x)$ and $r_8(x)$ can have at most one root in common.

The above discussion implies that the three additional high multiplicities occur if and only if there is a choice of~$\Lambda$ within $\mathcal{B}$ which causes all three of the polynomials $r_{3, 7}$, $r_{4, 8}$, and $r_{4, 9}$ to vanish.
To simplify further calculations we use the freedom of shifting and scaling in order to make the assumption that $\Lambda$ includes setting $\beta_2$ to $-1$ and $\beta_4$ to $1$,
which in particular eliminates the denominator $\beta_4 - \beta_2$ so that all resultants are polynomials in the remaining variables $\alpha_1$, $\alpha_2$, and $\beta_3$.

 Let $r_{a,b}'$ denote the simplification of the resultant $r_{a,b}$ obtained by removing all trivially nonzero factors, and consider $r_{3, 7}^\prime$, $r_{4, 8}^\prime$, and $r_{4, 9}^\prime$. For example (under the assumption $\beta_2=-1$ and $\beta_4=1$), we have 
 \[
 r_{3,7}^\prime = \alpha_1\alpha_2 - \alpha_1\beta_3 - \beta_3 - \alpha_1 .
 \]
 Using a computer algebra system to solve $r_{3,7}'=r_{4,8}'=r_{4,9}'=0$ under the constraints that $\{\alpha_1,\alpha_2,\beta_3,-1,1\}$ are distinct and real and that $b_2=(1+\alpha_1)(\alpha_2-\alpha_1)$ is positive produces a unique solution for $\alpha_1,\alpha_2,\beta_3$. Moreover, each of these three values lies in 
 $\mathbb{Q}[\om]$, where $\om \approx 0.334981556$ is the smallest positive root of
\[
\om^{6} - 3 \, \om^{5} - 11 \, \om^{4} + 24 \, \om^{3} - 6 \, \om^{2} - 48 \, \om + 16 = 0.
\]
The exact values are as follows: $\beta_4=1$, $\beta_2=-1$,
 \[
\begin{array}{rcrcrcrcrcrcrcrl}
\alpha_1 \seq \dfrac{1}{90} \sleft \om^{5} \smin 4 \om^{4} \smin 16 \om^{3}
  \splus 49 \om^{2} \splus 44 \om \smin 74 \sright \sprox -0.604555194, &\\
&&&&&&&{}&&&&&& \\
\alpha_2 \seq \dfrac{1}{30} \sleft - 3 \om^{5} \splus 10 \om^{4} \splus 30 \om^{3}
  \smin 73 \om^{2} \splus 24 \om \splus 14 \sright \sprox 0.502965741, &\mspace{-12mu}\mbox{and} \\
&&&&&&&{}&&&&&& \\
\beta_3 \seq \dfrac{1}{60} \sleft - \om^{5} \splus \om^{4} \splus 25 \om^{3}
  \smin 22 \om^{2} \smin \mspace{-3mu}134 \om \splus 92 \sright \sprox 0.759864937,
  \end{array}
  \]
giving the coincident eigenvalues
\[
\begin{array}{rcrcrcrcrcrcrcrl}
\lambda_{3,7} \seq \dfrac{1}{60} \sleft - 5 \om^{5} \splus 19 \om^{4} \splus 35 \om^{3}
  \smin \mspace{-3mu}124 \om^{2} \splus \mspace{-3mu}182 \om \smin \mspace{-3mu}124 \sright \sprox -1.256899196, &\\
&&&&&&&{}&&&&&& \\
\lambda_{4,8} \seq \dfrac{1}{90} \sleft - 2 \om^{5} \smin \om^{4} \splus 41 \om^{3}
  \splus 37 \om^{2} \smin \mspace{-3mu}124 \om \smin 86 \sright \sprox -1.354063522,  &\mspace{-12mu}\mbox{and} \\
&&&&&&&{}&&&&&& \\
\lambda_{4,9} \seq \dfrac{1}{30} \sleft - 2 \om^{5} \splus 9 \om^{4} \splus 11 \om^{3}
  \smin 69 \om^{2} \splus 80 \om \smin 42 \sright \sprox -0.747525931.
\end{array}
\]
Having exhibited a unique (up to shifting and scaling) common root of $r_{3, 7}$, $r_{4, 8}$, and $r_{4, 9}$ that lies within $\mathcal{B}$, after first having established that this exact set of eigenvalue coincidences is necessary and sufficient for a \highMultiplicityList{} with the required three extra multiplicities, all three claims of the theorem follow.
\end{proof}

We have shown that for any lush hedge of height at least $8$, there is an ordered multiplicity list $\ordm$ for which $\RS(T, \ordm)$ consists of only a single point---an ordered multiplicity list for which spectral arbitrariness fails maximally.

For any lush hedge with a number of levels $H + 1$ up to $40$, the resulting placement of all eigenvalues, arranged by level, is illustrated by Figure~\ref{fig:40levels}.

\begin{figure}[htb]
    \centering
    \includegraphics[width=4.9in]{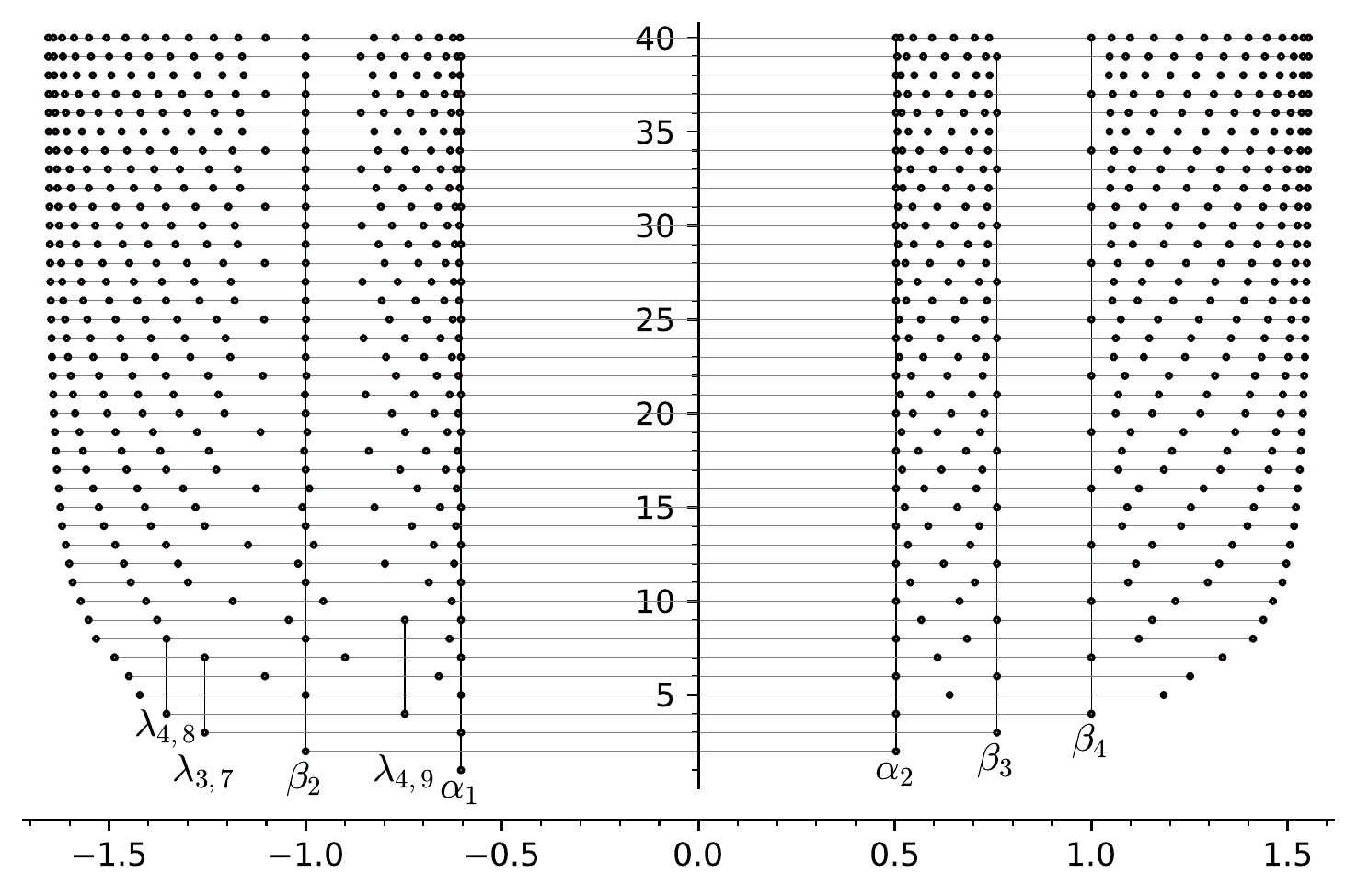}
    \caption{Depicted are $40$ levels of eigenvalues in the unique solution that arises in the proof of Theorem \ref{thm:rigid}.
    }
    \label{fig:40levels}
\end{figure}

\begin{remark}
Each of the three constraints $r_{3, 7}=0$, $r_{4, 8} = 0$, and $r_{4, 9} = 0$ reduces the dimension of $\RS(T, \ordm)$ by at most one, and together they reduce its dimension from $3$ down to $0$. It follows that a moduli space $\RS(T, \ordm)$ of dimension $2$ or of dimension $1$ can also be achieved by letting $\ordm$ be the ordered multiplicity list of a matrix obtained for some choice of~$\Lambda$ that satisfies only one or only two of these constraints.
\end{remark}

\bigskip

The next example applies Theorem \ref{thm:rigid} to a particular tree and a particular multiplicity list.

\begin{example}
 \label{ex:rigid}
 The smallest graph to which Theorem \ref{thm:rigid} may be applied is the smallest lush hedge of height $H = 8$, which we will call $\Texample$.
 This is nearly a complete ternary tree (root node of degree $3$, whose descendants have three children each and thus degree $4$), except that the leaf nodes come in pairs rather than in triples.
 The size of a lush hedge is at least exponential in its height, and this smallest example for height $8$ has $1 + 3 + 9 + 27 + 81 + 243 + 729 + 2187 + 4374 = 7654$ vertices in total.
 Taking differences between the number of vertices in consecutive levels, we obtain
$\ell_1 = 2187$,
$\ell_2 = 1458$,
$\ell_3 = 486$,
$\ell_4 = 162$,
$\ell_5 = 54$,
$\ell_6 = 18$,
$\ell_7 = 6$,
$\ell_8 = 2$, and
$\ell_9 = 1$.

Substituting in, for $\Lambda$, the algebraic numbers supplied in the proof of Theorem \ref{thm:rigid}, we obtain a matrix $C^\Lambda_9$ for which the path-to-hedge construction on $\Texample$ gives $\PHset(C^\Lambda_9,T_8)$, a parameterized family of very large matrices whose shared spectrum we know without having to write out any one of these matrices.
Calculating the eigenvalues of the appropriate submatrices of $C_9^\Lambda$ (or
referring to the lower $9$ levels of Figure \ref{fig:40levels}) we observe that there are no additional eigenvalue coincidences between levels other than those that are specifically required, and we deduce that the unordered multiplicity list of~$A$ will take the form
\[
\ordm(A) \mspace{5mu} =
\begin{array}{ll}
\mspace{-6mu}(
\ell_1 + \ell_3 + \ell_5 + \ell_7 + \ell_9 = 2734,
&(\mbox{for }\alpha_1) \\
\ell_2 + \ell_4 + \ell_6 + \ell_8 = 1640,
&(\mbox{for }\alpha_2) \\
\ell_2 + \ell_5 + \ell_8 = 1514,
&(\mbox{for }\beta_2 = -1) \\
\ell_3 + \ell_6 + \ell_9 = 505,
&(\mbox{for }\beta_3) \\
\ell_3 + \ell_7 = 492, 
&(\mbox{for }\lambda_{3, 7}) \\
\ell_4 + \ell_7 = 168, 
&(\mbox{for }\beta_4 = 1) \\
\ell_4 + \ell_8 = 164,
&(\mbox{for }\lambda_{4, 8}) \\
\ell_4 + \ell_9 = 163, 
&(\mbox{for }\lambda_{4, 9}) \\
\ell_5 = 54, \ell_5 = 54, \ell_5 = 54, &\\
\ell_6 = 18, \ell_6 = 18, \ell_6 = 18, \ell_6 = 18, &\\
\ell_7 = 6, \ell_7 = 6, \ell_7 = 6, \ell_7 = 6, &\\
\ell_8 = 2, \ell_8 = 2, \ell_8 = 2, \ell_8 = 2, \ell_8 = 2, &\\
\ell_9 = 1, \ell_9 = 1, \ell_9 = 1, \ell_9 = 1, \ell_9 = 1, \ell_9 = 1)
\mspace{-80mu}&
\end{array}
\]
with ordered multiplicity list
\begin{multline*}
\ordmrigid(\Texample)=
(1, 2, 6, 18, 54, 1, 164, 492, 18, 1, 1514, 6, 163, 18, 2, 2734, \\
1640, 1, 6, 54, 2, 505, 168, 2, 1, 54, 18, 6, 2, 1).
\end{multline*}
As a check on the total number of eigenvalues, each value $\ell_i$ contributes to the unordered multiplicity list in exactly $i$ places, and the sum of the ordered multiplicity list is $7654$.
The eigenvalues of~$A$ corresponding to the eight largest multiplicities are exactly the values in $\mathbb{Q}[\om]$ specified in the proof of Theorem \ref{thm:rigid}.
The eigenvalues of~$A$ corresponding to the leftover repeated multiplicities, from $\ell_5$ through $\ell_9$, are the
roots of specific monic polynomials,  factors of $r_5(x)$ through $r_9(x)$, each with degree the number of times the multiplicity is repeated and each of which has coefficients in (and, not surprisingly, turns out to be irreducible over)
the field $\mathbb{Q}[\om]$.

By Theorem \ref{thm:rigid}, any matrix $B \in \mathcal{S}(\Texample)$
that achieves this particular ordered multiplicity list $\ordmrigid(\Texample)$, and that does so with an eigenvalue $-1$ of multiplicity $1514$ and an eigenvalue $1$ of multiplicity $168$, will have precisely the same spectrum as $A$.
If $\ordmrigid(\Texample)$ were spectrally arbitrary for $\Texample$, then
$\RS(\Texample, \ordmrigid(\Texample))$ would fill all of $\simplex_{28}$;
instead it is $28$ dimensions smaller and consists of a single point.

\end{example}

\section*{Acknowledgements}
The authors are grateful to the organizers of the American Institute of Mathematics Research Community on the \emph{Inverse eigenvalue problem for graphs}. Shaun Fallat's research is in part supported by an NSERC Discovery Grant, RGPIN--2019--03934. Polona Oblak received funding from Slovenian Research Agency (research core funding no.~P1-0222 and project no.~J1-3004).

\goodbreak 

\bibliographystyle{plain}
\bibliography{bibliography}
\end{document}